\documentclass[a4paper,10pt]{article}

\usepackage{amsfonts}
\usepackage{amssymb}
\usepackage{amsthm}
\usepackage{amsmath}
\usepackage{a4wide}
\usepackage{mathrsfs}
\usepackage{epsfig}
\usepackage{palatino}
\usepackage{tikz,fp,ifthen}
\usepackage{float}
\usepackage{lipsum}

\newcommand{\pdfgraphics}{\ifpdf\DeclareGraphicsExtensions{.pdf,.jpg}\else\fi}
\usepackage{graphicx}

\usepackage{color}
\definecolor{hanblue}{rgb}{0.27, 0.42, 0.81}
\definecolor{red}{rgb}{1.0, 0.0, 0.0}
\usepackage[colorlinks, citecolor=hanblue,linkcolor=blue, urlcolor = hanblue,hypertexnames=false]{hyperref}

\usepackage{mathtools}
\mathtoolsset{showonlyrefs}

\numberwithin{equation}{section}

\theoremstyle{plain}

\newtheorem{thm}{Theorem}[section]
\newtheorem{lem}[thm]{Lemma}
\newtheorem{prop}[thm]{Proposition}
\newtheorem{cor}[thm]{Corollary}

\theoremstyle{definition}
\newtheorem{defn}[thm]{Definition}

\theoremstyle{remark}

\numberwithin{equation}{section}

\def\eps{\varepsilon}

\def\loc{_{\operatorname{loc}}}
\def\NN{\mathbb N}

\def\R{\mathbb R}

\renewcommand{\t }{\tau }

\newcommand{\intbar}{\etaathop{\int\etaakebox(-13.5,0){\rule[4pt]{.7em}{0.3pt}}
\kern-6pt}\nolimits}

\newcommand{\be}{\begin{equation}}
\newcommand{\ee}{\end{equation}}
\newcommand{\bea}{\begin{equation*}}
\newcommand{\eea}{\end{equation*}}

\def\loc{_{\operatorname{\rm loc}}}

\def\R{{{\mathbb R}}}
\def\SS{{{\mathbb S}}}

\def\NN{{{\mathbb N}}}

\def\eps{\varepsilon}

\def\dert{\partial_t}
\def\ders{\partial_s}

\def\be{\begin{equation}}
\def\ee{\end{equation}}
\def\bea{\begin{eqnarray*}}
\def\bean{\begin{eqnarray}}
\def\eean{\end{eqnarray}}
\def\eea{\end{eqnarray*}}

\setlength{\parindent}{0pt}

\begin{document}
\pdfgraphics 

\title{\mbox{Type-0 singularities in the network flow -- Evolution of trees}}

\author{Carlo Mantegazza \footnote{Dipartimento di Matematica e Applicazioni, Universit\`a di Napoli Federico II, Via Cintia, Monte S. Angelo
80126 Napoli, Italy} \and Matteo Novaga \footnote{Dipartimento di Matematica, Universit\`a di Pisa, Largo
    Bruno Pontecorvo 5, 56127 Pisa, Italy} \and Alessandra Pluda\footnotemark[2]}
\date{}

\maketitle

\begin{abstract}
\noindent
The motion by curvature of networks is the generalization to
finite union of  curves of the curve shortening flow. This evolution has
several peculiar features, mainly due to the presence of junctions where the curves
meet.
In this paper we show that whenever the length of one single curve
vanishes and two triple junctions coalesce, then the curvature of the evolving networks
remains bounded. This topological singularity is exclusive of the network flow
and it can be referred as a ``Type-0'' singularity, in contrast to the well known 
``Type-I'' and ``Type-II'' ones of the usual mean curvature flow of smooth curves or hypersurfaces, characterized
by the different rates of blow up of the curvature.
As a consequence, we are able to give a complete description of the 
evolution of tree--like networks till the first singular time,
under the assumption that all the ``tangents flows'' have unit multiplicity.
If the lifespan of such solutions is finite, then the curvature of the network remains bounded and we can apply the results from~\cite{IlNevSchu,LiMaPluSa} to ``restart'' the flow after the singularity. 
\end{abstract}

%


\section{Introduction}

One of the most studied geometric evolution equations is the {\em mean curvature flow}: a family of time--dependent $n$--dimensional hypersurfaces in $\mathbb{R}^{n+1}$ that evolve with normal velocity equal to the mean curvature at any point. Such flow can be interpreted as the gradient flow of the area functional. A natural generalization is the so--called 
{\em multiphase mean curvature flow}, where more than one hypersurface evolves in time by locally moving in the direction of the mean curvature. The presence of more than one hypersurface creates extra difficulties in dealing with the problem,
since the evolving objects are not smooth manifolds anymore
and one will have coupling conditions among the evolving surfaces due to their interactions.
Anyway, one of the earliest motivations for the study of the mean curvature flow was to model 
the grain growth (the increase in size of grains/crystallites in a material at high temperature), where considering more than one evolving surfaces is actually necessary.

\medskip

The one--dimensional version of such multiphase mean curvature flow is usually called
{\em motion by curvature of networks} or {\em network flow}, that is, a network of curves moving in the plane decreasing the total length in the ``most efficient'' way and hopefully converging to a configuration of minimal length (or at least a ``critical'' configuration for the ``total length functional'' on the networks).

There are basically two approaches to this problem, related to considering weak or strong (smooth) solutions, even if they have several points of contact. An advantage of the weak formulations is that one gets easily global existence results, but then showing uniqueness and regularity becomes quite hard. On the other side, well posedness of strong solutions is easier, but it is then complicated to define a flow passing past the singularities.

\medskip

Different notions of weak solutions have been investigated by several authors,
for instance~\cite{altawa,brakke,es,KiTo,LaOt}.
A very relevant one is the weak definition of the flow given in the seminal work by Brakke~\cite{brakke}, that is, a family of $n$--dimensional varifolds $\{V_t\}_{t\in [0,T)}$ such that  for 
almost every time, $V_t$ is an integral $n$--varifold with locally bounded first variation and ``generalized mean curvature'' in $L^2$, satisfying a suitable distributional inequality.
Recent progresses concerning the regularity of these solutions
has been made by Kim and Tonegawa~\cite{KiTo,KiTo2}
at least for codimension--one varifolds, ``coupling'' the evolving varifolds with 
a finite number of time--dependent mutually disjoint open sets which are the regions bounded by such varifolds. 
In this setting, when the initial datum is a closed 1--rectifiable set in $\mathbb{R}^2$ with (locally) finite measure, then for almost every time  the support of the evolving varifolds consists of embedded $W^{2,2}$--curves whose end--points meet at junctions forming angles of $0$, $60$ or $120$ degrees. 
Another possible way to define weak solutions is the so--called ``minimizing movements'' scheme, first introduced in~\cite{altawa,LuSt} and later extended to the network case in~\cite{BeKh,degio4}, which consists in defining the flow as limit of time-discrete flows, defined variationally by minimizing a penalized geometric energy. Finally, we mention that one can also consider a phase--field approximation of the flow given by the singular limits of solutions of a parabolic vector--valued Allen--Cahn equation, see~\cite{AlCh,Chen,LaSi}.

\medskip

The long term goal of our project is a complete description of the network flow 
defined in the framework of classical PDEs, that is, we are interested in smooth solutions far from singular times.
We consider the evolution of a finite union of smooth curves meeting at multiple junctions, since the network flow is the gradient flow of the length, it is expected (by ``energetic'' reasons) that for almost all times the evolving networks present
only triple junctions where the unit tangent vectors of the three 
concurring curves sum up to zero.
Hence, it is natural to first to restrict the analysis to the class of ``regular'' networks,
where curves meet only at triple junctions forming angles of $120$ degrees.
Then, the motion can be written as a system of quasilinear second order parabolic equations
with nonlinear boundary conditions (see~\eqref{motionsystem}).

\medskip

The first well posedness result for this problem has been established in the '90
by Bronsard and Reitich~\cite{BroRei} whenever the initial network (all its curves) is of class
$C^{2+\alpha}$ and the {\em compatibility conditions} for the system of PDEs, up to the second order, are fulfilled at the triple junctions (namely, both the sum of the three unit tangent vectors and the sum of the three curvatures are equal to zero, see~\cite[Definition 4.2]{ManNovPluSchu}). Existence of solutions can actually be achieved when the initial datum is only of class $C^2$,
without requiring second order conditions at the boundary~\cite[Theorem~6.8]{ManNovPluSchu}. 
Since it is then possible to prove existence and uniqueness 
when the initial network belongs to the fractional Sobolev space $W^{2-\frac{2}{p},p}$ 
with $p\in (3,+\infty)$ (as shown in~\cite[Theorem~1]{GoMePlu}),
we get a fortiori existence and uniqueness for $C^2$ initial data.

\medskip

Since by these results the flow starts, we can then study the long time behavior of the solutions.
By~\cite[Theorem~3.18]{ManNovTor}, if the maximal time of existence of a solution is finite, then
either the curvature blows up or the length of at least one curve goes to zero.
For instance, the area of a region bounded by less then six curve decrease linearly in time, then when the region vanishes, it can be shown that the curvature blows up. It is also easy to find an example where only he length of a single curve goes to zero and either
one triple junction collapses onto one boundary point or two triple junctions coalesce.
However, there are no examples in which the lengths of all the curves 
of the network remain strictly positive and the curvature goes to $+\infty$. Thus, one may guess that if no length goes to zero, then there are no  singularities. It turns out that this conclusion would be a consequence of the following conjecture.\\

\textbf{Multiplicity--One Conjecture (M1)}.
Every limit of parabolic rescalings of the flow around a fixed point in $\R^2$ is a flow of embedded networks with multiplicity one.\\

We are indeed able to prove that it if no length goes to zero and this conjecture is true, then the curvature remains bounded during the flow. In other words, there are no singularities until at least one curve vanishes.

\medskip

As we said before, if a region disappears at the maximal time of existence, then the curvature cannot remain bounded. One may guess that if the initial network does not contain any loop, hence there are no regions that can collapse, this type of singularities are excluded. We actually have a positive answer.

\begin{thm}\label{bddcurvature}
Let $\mathcal{N}_0$ be a regular network which is a {\em tree}. Suppose that
the Multiplicity--One Conjecture is true 
and that $\mathcal{N}_t$
is the maximal solution to the motion by curvature in the time interval 
$[0,T)$, with initial datum $\mathcal{N}_0$.
Then  the curvature of $\mathcal{N}_t$ is uniformly bounded during the flow.
\end{thm}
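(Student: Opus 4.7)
My approach is by contradiction. Suppose the curvature of $\mathcal{N}_t$ is not uniformly bounded on $[0,T)$. The first step is to invoke the known dichotomy at the maximal time of existence from~\cite[Theorem~3.18]{ManNovTor}: either the curvature blows up or some curve length $\ell_i(t)$ tends to zero as $t\to T$. Combining this with the statement emphasized in the paragraph preceding the theorem — namely that under the Multiplicity--One Conjecture, if no length vanishes then the curvature stays bounded — we may assume that the length of at least one curve $\gamma_i$ goes to zero as $t\to T$. Hence the task reduces to ruling out simultaneous curvature blow-up when a single curve of the tree is collapsing.

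The second step is a topological reduction using the hypothesis that $\mathcal{N}_0$ is a tree. Since the flow is smooth on $[0,T)$, the combinatorial type of $\mathcal{N}_t$ is constant in time, so every $\mathcal{N}_t$ with $t<T$ is still a tree. In particular the collapsing curve $\gamma_i$ cannot bound any region — there are no loops to shrink — and its two endpoints are either both triple junctions or one triple junction and one fixed boundary endpoint. Thus the length-vanishing event is precisely the ``topological'' Type-0 singularity described in the abstract: two triple junctions coalesce, or a triple junction collides with a boundary endpoint. This is where the tree assumption is crucially used, since it excludes the region-collapse mechanism (e.g.\ area of a loop with fewer than six sides decreasing linearly) which is known to force curvature blow-up.

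The third step is to apply the main Type-0 theorem of the paper: along the collapse of a single curve between two triple junctions (or between a triple junction and a boundary endpoint), the curvature of $\mathcal{N}_t$ remains uniformly bounded. This immediately contradicts the assumption of unboundedness and completes the argument. The proof of the Type-0 result itself proceeds via a blow-up analysis: parabolic rescalings around the singular point, combined with (M1), produce a tangent flow which is a unit-multiplicity, self-similarly shrinking regular network; the tree structure of $\mathcal{N}_0$ forces this tangent flow to be a static standard triod (or a standard half-line configuration), and White/Huisken-type regularity then upgrades this to uniform curvature bounds along the original flow.

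The principal obstacle is this last blow-up step: one must show that for a tree, no non-flat shrinking network can arise as a tangent flow. The (M1) hypothesis supplies unit multiplicity and hence the validity of Huisken's monotonicity formula for networks, but ruling out genuinely curved shrinkers in the limit requires careful use of the absence of enclosed regions in a tree together with the classification of self-similar shrinking regular networks. A secondary technical point is handling the collision of a triple junction with a boundary endpoint separately (and proving that its tangent flow analysis mirrors that of two coalescing interior junctions), which requires adapting the monotonicity and regularity tools to the Dirichlet boundary condition.
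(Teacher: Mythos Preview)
Your outline has the right architecture at the top level (blow up at a reachable point, classify the tangent flow using M1 and the tree hypothesis, conclude via local regularity), but there is a genuine gap in the classification step and in the final step.

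You assert that for a tree the tangent flow must be a static line, a standard triod, or a halfline configuration. This is false precisely in the situation the theorem is designed to handle: when a single curve joining two triple junctions collapses, the tangent flow at the collision point is a \emph{standard cross} (four halflines through the origin meeting at angles $60/120$ degrees). The paper's Lemma~\ref{lemmatree} and Lemma~\ref{possibiliblowup} show exactly this: under M1 the possible limits for a tree are a line, a standard triod, \emph{or a standard cross}. Your list omits the cross, and the cross is the whole point.

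The reason this matters is that White's local regularity and the Ilmanen--Neves--Schulze theorem (Theorem~\ref{thm:locreg.2}, Corollary~\ref{regcol}) only give curvature bounds when the Gaussian density satisfies $\widehat\Theta(p_0)\leq 3/2$, i.e.\ in the line and triod cases. A standard cross has $\widehat\Theta(p_0)=2$, so those regularity theorems say nothing. Consequently your sentence ``White/Huisken-type regularity then upgrades this to uniform curvature bounds'' is exactly where the argument breaks. The ``main Type-0 theorem'' you invoke is Theorem~\ref{cross}, and its proof is \emph{not} a blow-up/regularity argument: it is a new chain of integral estimates (Lemmas~\ref{kinfty}, \ref{kappa2}, \ref{kappaesse2}) showing first that $\Vert\kappa\Vert_{L^2}$ stays bounded on a time interval whose length is controlled by its initial value, and then that $\Vert\partial_s\kappa\Vert_{L^2}$ (hence $\Vert\kappa\Vert_{L^\infty}$) stays bounded provided the four non-collapsing ``boundary'' curves retain positive length. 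These estimates are then localized near the singular point using the smooth convergence of the rescaled flows to the cross away from the origin. None of this machinery appears in your proposal, and without it the cross case --- which is the only nontrivial case for a tree --- is unhandled.
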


We briefly outline the proof of this result. 
We consider suitable rescaled flows around points of $\mathbb{R}^2$.
Supposing that every limit of parabolic rescalings has multiplicity one
and that the evolving network is a {\em tree}, then the limit of our rescaling, 
when not empty, can only be the ``static'' flow given by a line, a standard triod
or a standard cross (see Figure~\ref{standard})
as we show in Lemma~\ref{possibiliblowup}.
In the case of the line or of a standard triod, then 
the original flow has bounded curvature thanks to results by White~\cite{Wh},
Magni, Mantegazza and Novaga~\cite{MaManNov} and Ilmanen, Neves and Schulze~\cite{IlNevSchu}.

It remains to show the same conclusion when the limit is a standard cross, and we will show this in Theorem~\ref{cross}. First, in Lemma~\ref{kinfty} and~\ref{kappa2} we prove that for any tree, if we assume a uniform control on the motion of its end--points, the $L^2$--norm of its curvature $\kappa$ is uniformly bounded in a time interval depending on its initial value. Moreover, red in the same hypotheses, we obtain an analogous conclusion on the $L^\infty$--norm of the curvature in terms of its $L^2$--norm and of the $L^2$--norm of its derivative in arclength $\partial_s\kappa$. Then, we prove that for a special tree, composed by only five curves, two triple junctions and four end--points on the boundary of $\Omega\subseteq\R^2$ open, convex and regular (see Figure~\ref{tree}), uniformly controlling, as before, its end--points and the lengths of the ``boundary curves'' from below, the $L^2$--norm of $\partial_s\kappa$ is bounded until $\Vert k\Vert_{L^2}$ stays bounded. By localizing these estimates one finally gets 
Theorem~\ref{cross}, thus Theorem~\ref{bddcurvature} follows.

\medskip

Even if the curvature is then always controlled, in Section~\ref{extypezero} we show that 
singularities might still occur during the evolution, due to topological changes when the length of a curve goes to zero.
Indeed, we explicitly construct an example of a network
composed of five curves where two triple junctions coalesce in finite time
with the vanishing of one curve.
Because of their peculiar nature, we refer to these singularities with bounded curvature as {\em Type-0} singularities (suggestion by Tom~Ilmanen). We stress the fact that they are exclusive of the network flow, as they cannot appear in the motion by curvature of a single curve. 

\medskip

It is worth mentioning that, if a network contains at most two triple junctions,
we can prove that every limit of parabolic rescalings is a flow of embedded networks with multiplicity one~\cite{ManNovPlu}, but unfortunately the Multiplicity--One Conjecture is still an open problem in full generality.
Assuming it, we have a quite complete, ``nice'' description of the flow of tree--like networks till the first singular time.

\begin{thm}\label{main}
Let $\mathcal{N}_0$ be a regular tree composed of $N$ curves in an open, convex and regular $\Omega\subseteq\R^2$ Suppose that $\mathcal{N}_t$ is the maximal solution to the motion by curvature in the time interval 
$[0,T)$ with initial datum $\mathcal{N}_0$.
Suppose that the Multiplicity--One Conjecture holds and that $T<+\infty$.
Then, the curves $\{\gamma^i\}_{i=1}^N$ of the flow $\mathcal{N}_t$, 
up to reparametrization proportional to arclength, converge in $C^1\loc$
either to constant maps or to regular $C^2$ curves composing the limit 
network $\widehat{\mathcal{N}}_T$, as $t\to T$.
The only possible singularities, are given by the collapse of isolated ``inner'' curves of the network, producing a regular $4$--point, or the collapse of some ``boundary curves'' on the fixed end--points of the network, letting two concurring curves forming at such end--point an angle of $120$ degrees.
\end{thm}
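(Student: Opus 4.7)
The strategy is to combine the bounded-curvature result of Theorem~\ref{bddcurvature} with a compactness argument on curves reparametrized proportionally to arclength, and then to identify the possible singular configurations via the classification of parabolic blow-ups of Lemma~\ref{possibiliblowup}.

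Since $\mathcal{N}_0$ is a tree and we assume the Multiplicity--One Conjecture, Theorem~\ref{bddcurvature} furnishes a uniform bound $\|\kappa\|_{L^\infty}\le C$ on $[0,T)$. Standard parabolic regularity applied to the quasilinear system~\eqref{motionsystem} upgrades this to uniform bounds on all arclength derivatives $\partial_s^j\kappa$ on any compact subset of the interior of each curve. For every $i$, reparametrize $\gamma^i(\cdot,t)$ proportionally to arclength on $[0,1]$, so that $|\partial_s\gamma^i(\cdot,t)|=L^i(t)$ and $\partial_s\tau^i=L^i(t)\,\kappa^i\nu^i$. Since $\mathcal{N}_t\subseteq\overline{\Omega}$ and the total length is nonincreasing, each $L^i(t)$ stays bounded; along a suitable sequence $t_n\uparrow T$ one may assume that each $L^i(t_n)$ converges to some $L^i_T\in[0,\infty)$.

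Call a curve \emph{persistent} if $L^i_T>0$ and \emph{vanishing} if $L^i_T=0$. For a persistent curve, the uniform bounds on $\tau^i$, $\partial_s\tau^i$ and $\partial_s\kappa^i$ produce, via Arzel\`a--Ascoli, $C^1$--convergence on compact subsets of the interior to a regular $C^2$ limit $\gamma^i_\infty$. For a vanishing curve, the identity $|\partial_s\gamma^i(\cdot,t_n)|=L^i(t_n)\to 0$ forces uniform convergence to the constant map $p^i\in\overline{\Omega}$, equal to the common limit of the two endpoints. Matching these piecewise limits along the combinatorial structure of $\mathcal{N}_0$ defines the limit network $\widehat{\mathcal{N}}_T$.

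It remains to describe $\widehat{\mathcal{N}}_T$ at each collapse point $p^i$. Performing parabolic rescalings of $\mathcal{N}_t$ at $(p^i,T)$, Lemma~\ref{possibiliblowup} (together with its straightforward boundary analogue) ensures that any nonempty subsequential limit is the static flow of a line, a standard triod, or a standard cross. A line or triod blow-up is incompatible with the presence of a vanishing curve collapsing at $p^i$, since in the rescaled picture one must see two junctions approaching each other; hence the blow-up is a standard cross. If $p^i$ lies in the interior of $\Omega$, this singles out one collapsing inner curve whose two triple-junction endpoints merge at $p^i$, while the four surviving curves meet along the directions of a standard cross, i.e.\ a regular $4$--point. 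If instead $p^i\in\partial\Omega$, the vanishing curve must be a boundary curve, and the $120^\circ$ angle condition at the sliding triple junction passes to the $C^1$ limit, leaving the two persistent curves meeting at $p^i$ with an angle of $120^\circ$. The principal obstacle is ruling out pathological configurations in which several vanishing curves accumulate at the same $p^i$ with incompatible tangents, and this is precisely what the rigidity in Lemma~\ref{possibiliblowup} precludes.
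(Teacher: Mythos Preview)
Your overall strategy matches the paper's: invoke Theorem~\ref{bddcurvature} for the curvature bound, pass to a limit network by compactness, and read off the singular configurations from the blow--up classification in Lemma~\ref{possibiliblowup}. This is exactly how the paper proceeds, via Proposition~\ref{bdcurvcollapse}. However, two points in your sketch are genuinely incomplete.

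\medskip

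\textbf{Full convergence versus subsequential convergence.} You choose a sequence $t_n\uparrow T$ along which the lengths converge, and then apply Arzel\`a--Ascoli. This only yields convergence along a further subsequence, and a priori the partition into ``persistent'' and ``vanishing'' curves could depend on that subsequence. The theorem, however, asserts convergence of the full family as $t\to T$. The paper does not argue via compactness: it first uses Lemma~\ref{limite} (the derivative $dL^i/dt$ is bounded once $\kappa$ is) to get that each $L^i(t)$ has an honest limit, and then shows that the arclength--proportional reparametrizations $\widehat\gamma^i(\cdot,t)$ have uniformly bounded velocity $\partial_t\widehat\gamma^i$, so they form a Cauchy family in $C^0$ as $t\to T$. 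Full $C^1$ convergence then follows since the limit of the tangent vectors is pinned down by the $C^0$ limit. Your Arzel\`a--Ascoli step should be replaced by this Cauchy argument.

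\medskip

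\textbf{The $C^2$ regularity of the limit curves.} You appeal to ``standard parabolic regularity'' to get uniform bounds on $\partial_s^j\kappa$ on compact subsets of the interior of each curve. That is fine in the interior, but it does not give $C^2$ regularity up to the junctions, which is what ``regular $C^2$ curves'' requires. The paper obtains this from Lemma~\ref{kappaesse2}: around each collapse point the network is (after introducing artificial moving end--points as in the proof of Theorem~\ref{cross}) a five--curve tree with controlled boundary, and that lemma gives a uniform bound on $\Vert\partial_s\kappa\Vert_{L^2}$ on the whole local network, hence $C^2$ limit curves up to the junctions. Interior parabolic estimates alone do not deliver this.

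\medskip

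The blow--up step is broadly correct but also sketchier than the paper's. To exclude a line or triod limit at a collapse point $p_0$ you need to know that the triple junctions near $p_0$ actually stay in a bounded region of the rescaled picture; the paper makes this quantitative by using the bounded velocity of the junctions to get $|O^j(t)-p_0|\le D|T-t|$, which forces the rescaled junctions to the origin. This is also what guarantees that the ``core'' of the cross is a single collapsed edge, hence that vanishing curves are isolated.
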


We mention that then one can apply the results in~\cite{IlNevSchu,LiMaPluSa} and ``restart'' the flow with a smooth evolving regular network, hence the main question that remains open in order to obtain a global existence result, is the possible accumulation in time of the singularities, that we will not address in this paper.

\section*{Acknowledgments}

The authors were partially supported by INDAM--GNAMPA project 2020 
{\it Minimal clusters and minimal partitions},
the first author by the PRIN Project {\it Geometric flows, optimal transport and metric
measure structures} and the second and third authors
by the PRIN Project  {\it Variational methods for stationary and evolution problems with singularities and interfaces}.

\section{Preliminaries}

\subsection{Notation}

Consider a curve of class $C^1([0,1];\mathbb{R}^2)$.
We say that $\gamma$ is {\em regular} if for every $x\in [0,1]$ we have
$\partial_x\gamma(x)\neq 0$. 
It is then well defined
its {\em unit tangent vector} $\tau=\partial_x\gamma/\vert\partial_x\gamma\vert$. 
We define its {\em  unit normal vector} as
$\nu=\mathrm{R}\tau$
where $\mathrm{R}:\mathbb{R}^{2}\to\mathbb{R}^{2}$ is the counterclockwise
rotation centered in the origin of $\mathbb{R}^{2}$ of angle
${\pi}/{2}$.\\

Assuming that  the 
curve is of class $C^2([0,1];\mathbb{R}^2)$.
we define the curvature vector as
\begin{equation*}
\vec{k}(x):=\frac{\partial_x^2\gamma(x)}{\vert\partial_x\gamma(x)\vert^2}
-\frac{\partial_x\gamma(x)\left\langle\partial_x^2\gamma(x),\partial_x\gamma(x)\right\rangle}{\vert\partial_x(x)\gamma\vert^4}
=\kappa(x)\nu(x)\,,
\end{equation*}
where $\kappa$ is the scalar curvature

The {\em arclength parameter} of a curve $\gamma$ is given by
$$
s=s(x)=\int_0^x\vert\partial_x\gamma(\xi)\vert\,\mathrm{d}\xi\,.
$$

We remind here that 
in the whole paper we will use the word ``curve" both for the parametrization and for the
set image of the parametrization in $\mathbb{R}^2$.

\begin{defn}
A network $\mathcal{N}$ is a connected  set in the Euclidean plane, composed of finitely many 
regular, embedded and sufficiently smooth curves that meet only at their end--points.
\end{defn}

We distinguish between interior and exterior vertices of the network:
at the firsts, more then one curve concur,
and the latter are  the termini of the network.

We say that a network is a \emph{tree} if it does not contain loops.

\medskip

We assume that 
each curve of a network admits a regular parametrization $\gamma\in C^2([0,1];\mathbb{R}^2)$.

All along the paper we denote by $L^i$ the length of the $i$--th curve of the network, namely
$$
L^i:= \int_0^1\vert\partial_x\gamma^i(x)\vert\,\mathrm{d}x=\int_{\gamma^i}1\,\mathrm{d}s\,.
$$

We are going to give a name to the following special sub--class of networks
because of the key role they are going to play in the paper:

\begin{defn}\label{regularnet}
A network is regular if it possesses only triple junctions, where  the unit tangent vectors
form angles of $120$ degrees.
\end{defn}

Consider a regular network 
composed of $N$ curves $\gamma^i_\in C^2([0,1],\mathbb{R}^2)$,
that meet at $m$ triple junctions $\mathcal{O}^1, \dots \mathcal{O}^m$ 
and possibly with $\ell$ external vertices $P^1, \dots, P^\ell\in\partial\Omega$.
For every $p\in\{1,\ldots,m\}$
we denote by $\gamma^{p1},\gamma^{p2}$ and $\gamma^{p3}$ the curves that
meet at the junction $\mathcal{O}^p$.
We assumed conventionally (possibly relabeling the family of
curves and inverting their parametrization) that the end--point
$P^r$ of the network is given by $\gamma^r(1,t)$
and we write $\tau^{pj}(O^p)$
for  the outward unit tangent vectors of the three curves
$\gamma^{pj}(\cdot)$ that meet at $\mathcal{O}^p$.

\subsection{Motion by curvature of networks}

\medskip

We are interested in the motion by curvature of networks and to deal with this problem
we use a direct PDE approach.
The network flow can be understood as the gradient flow of the length
\begin{equation*}
L(\mathcal{N}):=\sum_{i} \int_0^1\vert\partial_x\gamma^i(x)\vert\,\mathrm{d}x=\sum_{i} \int_{\gamma^i}1\,\mathrm{d}s\,.
\end{equation*}
Formally, we derive the motion equations computing the first variation of $L$. Each curve moves with normal velocity equal to its curvature
\begin{equation*}
v^\perp(t,x)=\vec{k}(x)\,,
\end{equation*}
or, equivalently $\left\langle\partial_t\gamma(t,x),\nu(t,x)\right\rangle=\kappa(t,x)$.
Moreover, because of the gradient flow structure of the evolution, 
we expect that for almost all the times during the evolution, the network presents only triple junctions
where the unit tangent vectors sum up to zero.
This justify the introduction of the notion of regular networks.

Let $\Omega$ be a convex, open, bounded and regular set in $\mathbb{R}^2$.
We consider the evolution of regular networks in $\Omega$ and we suppose that
the exterior vertices have order one and remain fixed on $\partial\Omega$ during the flow.
Both the case in which the end--points move on $\partial\Omega$ 
(with suitable Neumann boundary conditions) and 
the case of networks with non compact branches
(satisfying suitable conditions at infinity) can be treat similarly, but we do not consider them in this paper.

\begin{defn}
Let $\Omega$ be a bounded, convex, open and regular subset of $\mathbb{R}^2$.
Given an initial regular network $\mathcal{N}_0$, 
composed of $N$ curves $\gamma^i_0\in C^2([0,1],\overline{\Omega})$,
that meet at $m$ triple junctions $\mathcal{O}^1, \dots \mathcal{O}^m$ in $\Omega$ 
and possibly with $\ell$ external vertices $P^1, \dots, P^\ell\in\partial\Omega$,
we say that a family of homeomorphic 
networks $\mathcal{N}_t$ 
is a solution to the motion by curvature in $[0,T)$ with initial datum $\mathcal{N}_0$
if there exist $N$ regular curves parametrized by
$$
\gamma^i\in C^{1,2}([0,T)\times [0,1])\,,
$$
that satisfy
the following system of conditions for every $x\in[0,1]$ and $t\in [0,T)$:
\begin{equation}\label{motionsystem}
\begin{cases}
\begin{array}{ll}
\left\langle\partial_t\gamma^i(t,x),\nu^i(t,x)\right\rangle=\kappa^i(t,x)\quad& i\in\{1,\dots, N\}\\
\gamma^r(1,t)=P^r\quad& r\in\{1,\dots, \ell\}\\
\sum_{j=1}^3\tau^{pj}(O^p,t)=0\quad&\text{at every triple junction $O^p$}
\quad \\
\gamma^i(0,x)=\gamma_0^i(x)
\quad & i\in\{1,\dots, N\},\\
\end{array}
\end{cases}
\end{equation}
where we assumed conventionally (possibly relabeling the family of
curves and inverting their parametrization) that the end--point
$P^r$ of the network is given by $\gamma^r(1,t)$.
\end{defn}

Note that in the third equation  we write $\tau^{pj}(O^p,t)$
for  the outward unit tangent vectors of the three curves
$\gamma^{pj}(\cdot,t)$ that meet at $\mathcal{O}^p$.

\medskip

We underline the fact that the motion equation is equivalent to
\begin{align*}
\partial_t\gamma^i(t,x)=\kappa^i(t,x)\nu^i(t,x)+\zeta^i(t,x)\tau^i(t,x)\,,
\end{align*}
where $\zeta$ is a non--zero tangential velocity, 
at the junctions
geometrically determined by the curvatures of the concurring curves.
In the whole paper we will label the tangential velocity  by $\zeta$.
When $\zeta=\left\langle\frac{\partial_x^2\gamma}{\vert\partial_x\gamma\vert^2},\tau\right\rangle$,
we obtain the so--called special flow 
$$
\partial_t\gamma=\frac{\partial_x^2\gamma}{\vert\partial_x\gamma\vert^2}\,.
$$

\subsection{Short time existence and behavior at the maximal time of existence}

Combining results from~\cite{BroRei,GoMePlu,ManNovPluSchu} one gets:

\begin{thm}\label{short-time}
Let $\mathcal{N}_0$ be a regular network of class $C^2$.
Then there exists a maximal solution 
to the network flow with initial datum $\mathcal{N}_0$
in the maximal time interval $[0,T_{\max})$. It is unique up to reparametrization 
and it admits a smooth parametrization for all positive times.
\end{thm}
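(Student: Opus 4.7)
The plan is to reformulate the geometric system~\eqref{motionsystem} as a quasilinear parabolic PDE system by fixing the tangential velocity, and then to assemble the three cited works in sequence: \cite{BroRei,GoMePlu,ManNovPluSchu} for short-time existence and uniqueness in an appropriate function space, a parabolic bootstrap to promote the solution to $C^\infty$ for positive times, and a standard continuation argument to produce the maximal interval $[0,T_{\max})$.

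First I would replace the purely geometric condition $\langle\partial_t\gamma,\nu\rangle=\kappa$ by the \emph{special flow}
\begin{equation*}
\partial_t\gamma^i=\frac{\partial_x^2\gamma^i}{|\partial_x\gamma^i|^2},
\end{equation*}
which agrees with motion by curvature in the normal direction while fixing a definite tangential parametrization. This turns each curve equation into a quasilinear second-order parabolic PDE on $[0,1]$; the junction condition $\sum_{j=1}^{3}\tau^{pj}=0$ becomes a nonlinear first-order boundary condition coupling the three curves concurring at $\mathcal{O}^p$, and $\gamma^r(1,t)=P^r$ is a Dirichlet condition at the exterior vertices. For $C^{2+\alpha}$ initial data satisfying the full second-order compatibility (both $\sum_j\tau^{pj}=0$ and $\sum_j\kappa^{pj}=0$ at each junction), \cite{BroRei} yields a classical solution via Solonnikov-type parabolic theory. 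For merely $C^2$ data the second-order compatibility generically fails, so one works instead in the fractional Sobolev scale $W^{2-2/p,p}$ with $p\in(3,+\infty)$, as in \cite{GoMePlu}: in this framework only zeroth and first order compatibility are required, and these reduce respectively to $\gamma_0^r(1)=P^r$ and to the $120$-degree angle condition, both built into the definition of a regular initial network. Since $C^2([0,1];\R^2)\hookrightarrow W^{2-2/p,p}([0,1];\R^2)$, this gives a short-time solution for any regular $C^2$ initial datum, unique within its parametrization class.

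For $t>0$ parabolic regularization then upgrades the solution to $C^\infty$. Starting from the $W^{2-2/p,p}$ solution, an iterated application of the standard quasilinear parabolic $L^p$/Schauder theory on shifted time slices yields arbitrary spatial and temporal regularity away from $t=0$, as carried out in~\cite[Section 6]{ManNovPluSchu}. Geometric uniqueness up to reparametrization follows because any two solutions of~\eqref{motionsystem} sharing the same initial network can each be brought into the special-flow gauge by a time-dependent diffeomorphism of $[0,1]$; the resulting two special flows then satisfy the same quasilinear system with the same initial data, and therefore coincide by PDE uniqueness. The maximal time $T_{\max}$ is finally obtained in the standard manner by restarting the short-time existence theorem at any interior time and taking the supremum over all intervals of existence.

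The principal technical obstacle I anticipate is verifying that the linearization of the coupled boundary conditions at the triple junctions satisfies the Lopatinskii--Shapiro complementing condition in the $W^{2-2/p,p}$ setting, and matching the compatibility at $t=0$ with the regularity of the initial datum without imposing the stronger second-order compatibility of Bronsard--Reitich. This is precisely the technical heart of~\cite{GoMePlu,ManNovPluSchu} and is what forces one to abandon the classical Hölder scale in favour of the fractional Sobolev setting when only $C^2$ regularity of $\mathcal{N}_0$ is assumed.
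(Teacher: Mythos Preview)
Your proposal is correct and follows the same overall strategy as the paper: assemble the three cited references to obtain short-time well-posedness, positive-time smoothing, and then the maximal interval by continuation. The only (minor) organizational difference is in how existence for merely $C^2$ initial data is obtained. The paper's proof does \emph{not} go directly through the embedding $C^2\hookrightarrow W^{2-2/p,p}$ and~\cite{GoMePlu}; instead it first produces a solution via the approximation procedure of~\cite[Theorem~6.8]{ManNovPluSchu} (approximating the $C^2$ datum by $C^{2+\alpha}$ networks satisfying the second-order compatibility $\sum_j\kappa^{pj}=0$, and applying~\cite{BroRei} to those), and then invokes~\cite{GoMePlu} only for uniqueness and for smoothness at positive times. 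Your route---pulling both existence and uniqueness directly from the $W^{2-2/p,p}$ theory of~\cite{GoMePlu}---is actually the one advertised in the paper's introduction and is arguably more streamlined; the paper's route has the advantage of making the role of~\cite{BroRei} explicit. Either ordering is valid.
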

\begin{proof}
The main result contained in~\cite{BroRei} is existence and uniqueness 
of the flow in a small time interval $[0,T]$ provided the initial datum
is in the H\"{o}lder space  $C^{2+\alpha}$, with $\alpha\in (0,1)$, with the property that
the curvatures sum up to zero at the junctions.
Then, by an approximation procedure exploit in~\cite[Theorem 6.8]{ManNovPluSchu}, 
it is possible to produce a solution when the initial datum is of class $C^2$,
without any assumption on the curvature at the junctions.
To conclude, uniqueness of solution in a short time interval 
 is proved in~\cite[Theorem 1.1]{GoMePlu}
whenever the initial datum is in the Sobolev--Slobodeckij  space $W^{2-\frac{2}{p},p}$
with $p\in (3,\infty)$. 
So in particular the uniqueness still holds true if $\mathcal{N}_0$ is of class $C^2$.
Existence and uniqueness of a maximal solution in $[0,T_{\max})$ is then easy to get
(see for instance~\cite{GoMePlu}).
Smoothness of solutions for positive times is also part of ~\cite[Theorem 1.1]{GoMePlu}.
\end{proof}

A short time existence result can be shown, with some appropriate weak definitions, also to initial $C^2$ networks that are not regular in the sense of Definition~\ref{regularnet} (see~\cite{IlNevSchu, LiMaPluSa}). However, we underline that whatever definition of curvature flow for a general network (even asking that the networks of the flow are regular for every small positive time), uniqueness can be lost, as it is shown in the following example.
\begin{figure}[H]
\begin{center}
\begin{minipage}[c]{.35\textwidth}
\begin{tikzpicture}
\draw[color=black,scale=1,domain=-3.141: 3.141,
smooth,variable=\t,shift={(-5,2)},rotate=0]plot({1.42*sin(\t r)},
{1.42*cos(\t r)});
\draw[color=black,thick,scale=1,domain=0: 1.5708,
smooth,variable=\t,shift={(-6,2)},rotate=0]plot({1*sin(\t r)},
{1*cos(\t r)});
\draw[color=black,thick,scale=1,domain=0: 1.5708,
smooth,variable=\t,shift={(-4,2)},rotate=180]plot({1*sin(\t r)},
{1*cos(\t r)});
\draw[color=black,thick,scale=1,domain=0: 1.5708,
smooth,variable=\t,shift={(-5,1)},rotate=90]plot({1*sin(\t r)},
{1*cos(\t r)});
\draw[color=black,thick,scale=1,domain=0: 1.5708,
smooth,variable=\t,shift={(-5,3)},rotate=-90]plot({1*sin(\t r)},
{1*cos(\t r)});
\path[font=\large]
(-6,3.1) node[left] {$P^4$}
(-4,3.1) node[right] {$P^3$}
(-6,.9) node[left] {$P^1$}
(-4,.9) node[right] {$P^2$}
(-5.3,2.47) node[below] {$O$};
\draw[color=black!40!white,rotate=69,shift={(.2,.3)}]
(-0.05,2.65)to[out= -90,in=150, looseness=1] (0.17,2.3)
(0.17,2.3)to[out= -30,in=100, looseness=1] (-0.12,2)
(-0.12,2)to[out= -80,in=40, looseness=1] (0.15,1.7)
(0.15,1.7)to[out= -140,in=90, looseness=1.3](0,1.1)
(0,1.1)--(-.2,1.35)
(0,1.1)--(+.2,1.35);
\draw[color=black!40!white,rotate=111,shift={(3.35,-1.05)}]
(-0.05,2.65)to[out= -90,in=150, looseness=1] (0.17,2.3)
(0.17,2.3)to[out= -30,in=100, looseness=1] (-0.12,2)
(-0.12,2)to[out= -80,in=40, looseness=1] (0.15,1.7)
(0.15,1.7)to[out= -140,in=90, looseness=1.3](0,1.1)
(0,1.1)--(-.2,1.35)
(0,1.1)--(+.2,1.35);
\end{tikzpicture}
\end{minipage}
\quad  
\begin{minipage}[c]{.25\textwidth}
\begin{tikzpicture}
\draw[color=black,scale=1,domain=-3.141: 3.141,
smooth,variable=\t,shift={(1,0)},rotate=0]plot({1.42*sin(\t r)},
{1.42*cos(\t r)});
\draw[color=black,thick,scale=1,domain=0: 1.5708,
smooth,variable=\t,shift={(0,0)},rotate=0]plot({1*sin(\t r)},
{1*cos(\t r)});
\draw[color=black,thick,scale=1,domain=0: 1.5708,
smooth,variable=\t,shift={(2,0)},rotate=180]plot({1*sin(\t r)},
{1*cos(\t r)});
\draw[color=black,thick,scale=1,domain=0: 1.5708,
smooth,variable=\t,shift={(1,-1)},rotate=90]plot({1*sin(\t r)},
{1*cos(\t r)});
\draw[color=black,thick,scale=1,domain=0: 1.5708,
smooth,variable=\t,shift={(1,1)},rotate=-90]plot({1*sin(\t r)},
{1*cos(\t r)});
\filldraw[color=white,scale=0.2,domain=-3.141: 3.141,
smooth,variable=\t,shift={(5,0)},rotate=0]plot({1.42*sin(\t r)},
{1.42*cos(\t r)});
\draw[color=black, thick, shift={(1,0)}]
(-0.05,-0.05)to[out=45,in=-135, looseness=1](0.05,0.05);
\draw[color=black, thick, shift={(1,0)}]
(-0.05,-0.05)to[out=165,in=10, looseness=1](-0.29,-0.045);
\draw[color=black, thick, shift={(1,0)}]
(-0.05,-0.05)to[out=-75,in=110, looseness=1](0.045,-0.29);
\draw[color=black, thick, shift={(1,0)},rotate=180]
(-0.05,-0.05)to[out=165,in=10, looseness=1](-0.29,-0.045);
\draw[color=black, thick, shift={(1,0)}, rotate=180]
(-0.05,-0.05)to[out=-75,in=110, looseness=1](0.045,-0.29);
\path[font=\large]
(0,1.1) node[left] {$P^4$}
(2,1.1) node[right] {$P^3$}
(0,-1.1) node[left] {$P^1$}
(2,-1.1) node[right] {$P^2$}
(1.4,0.67) node[below] {$O^1$}
(0.7,-0.13) node[below] {$O^2$};
\end{tikzpicture}

\quad

\begin{tikzpicture}[rotate=90]
\draw[color=black,scale=1,domain=-3.141: 3.141,
smooth,variable=\t,shift={(1,0)},rotate=0]plot({1.42*sin(\t r)},
{1.42*cos(\t r)});
\draw[color=black,thick,scale=1,domain=0: 1.5708,
smooth,variable=\t,shift={(0,0)},rotate=0]plot({1*sin(\t r)},
{1*cos(\t r)});
\draw[color=black,thick,scale=1,domain=0: 1.5708,
smooth,variable=\t,shift={(2,0)},rotate=180]plot({1*sin(\t r)},
{1*cos(\t r)});
\draw[color=black,thick,scale=1,domain=0: 1.5708,
smooth,variable=\t,shift={(1,-1)},rotate=90]plot({1*sin(\t r)},
{1*cos(\t r)});
\draw[color=black,thick,scale=1,domain=0: 1.5708,
smooth,variable=\t,shift={(1,1)},rotate=-90]plot({1*sin(\t r)},
{1*cos(\t r)});
\filldraw[color=white,scale=0.2,domain=-3.141: 3.141,
smooth,variable=\t,shift={(5,0)},rotate=0]plot({1.42*sin(\t r)},
{1.42*cos(\t r)});
\draw[color=black, thick, shift={(1,0)}]
(-0.05,-0.05)to[out=45,in=-135, looseness=1](0.05,0.05);
\draw[color=black, thick, shift={(1,0)}]
(-0.05,-0.05)to[out=165,in=10, looseness=1](-0.29,-0.045);
\draw[color=black, thick, shift={(1,0)}]
(-0.05,-0.05)to[out=-75,in=110, looseness=1](0.045,-0.29);
\draw[color=black, thick, shift={(1,0)},rotate=180]
(-0.05,-0.05)to[out=165,in=10, looseness=1](-0.29,-0.045);
\draw[color=black, thick, shift={(1,0)}, rotate=180]
(-0.05,-0.05)to[out=-75,in=110, looseness=1](0.045,-0.29);
\path[font=\large,rotate=-90, shift={(-1,1)}]
(0,1.1) node[left] {$P^4$}
(2,1.1) node[right] {$P^3$}
(0,-1.1) node[left] {$P^1$}
(2,-1.1) node[right] {$P^2$};
\path[font=\large]
(1.6,0.5) node[below] {$O^1$}
(1,-0.43) node[below] {$O^2$};
\end{tikzpicture}
\end{minipage}
\end{center}
\begin{caption}{An example of non--uniqueness of the flow.\label{nonuniqnet}}
\end{caption}
\end{figure}
\noindent Indeed, by the symmetry of the initial network with respect to rotations of $90$ degrees, the rotation of any possible  evolution must still be a solution, as this invariance property must be clearly satisfied.

\medskip

Once established the existence of a solution in a maximal time interval $[0,T)$ it is natural
to wonder what happens at $T$. One can find a first answer in~\cite{MaManNov,ManNovTor}, that we state in the following proposition.

\begin{prop}\label{longtime}
Let $\mathcal{N}_t$ be the maximal solution to the motion by curvature of networks in the 
time interval $[0,T)$.
Then either
$$
T=+\infty
$$
or at least one of the following properties holds:
\begin{itemize}
\item the inferior limit of the length of at least one curve of the network $\mathcal{N}_t$
is zero as $t\to T$;
\item the superior limit of the integral of the squared curvature is $+\infty$ as $t\to T$.
\end{itemize}
Moreover if the lengths of all the curves of the network are uniformly 
bounded from below during the flow, 
then the superior limit is  a limit, and  for every $t\in[0, T)$
there exists a positive constant $C$ such that 
$$
\int_{{\mathcal{N}_t}} \kappa^2\,\mathrm{d}s \geq \frac{C}{\sqrt{T-t}}\qquad\text{ and }\qquad
\max_{\mathcal{N}_t}\kappa^2\geq\frac{C}{\sqrt{T-t}}\,.
$$
\end{prop}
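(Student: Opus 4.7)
The plan is to prove the trichotomy by contradiction and then deduce the blow-up rates from a differential inequality for $\int_{\mathcal{N}_t}\kappa^2\,\mathrm{d}s$. For the first assertion, I would assume $T<+\infty$ together with a uniform lower bound on all the lengths $L^i(t)$ and $\limsup_{t\to T}\int_{\mathcal{N}_t}\kappa^2\,\mathrm{d}s<+\infty$. The length lower bounds prevent degeneration of the arclength parametrizations, and Gagliardo--Nirenberg interpolation on each curve, combined with the $L^2$ bound on $\kappa$, yields an $L^\infty$ bound on $\kappa$ and on its arclength derivatives once one runs the classical bootstrap based on the evolution of $\int |\partial_s^k\kappa|^2\,\mathrm{d}s$. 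At each level of the bootstrap the interior terms are standard (a good term $-2\int|\partial_s^{k+1}\kappa|^2$ plus polynomial nonlinearities absorbed by interpolation), and the boundary contributions produced at triple junctions and at the fixed endpoints are reduced to algebraic expressions in the values of $\kappa$ and its arclength derivatives by repeatedly differentiating the constraint $\sum_j\tau^{pj}=0$ in time (using the motion equation) and in arclength. This produces uniform $C^\infty$ bounds up to $T$, so a smooth limit network $\mathcal{N}_T$ exists, and Theorem~\ref{short-time} applied to $\mathcal{N}_T$ extends the flow past $T$, contradicting the maximality of $[0,T)$.

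For the quantitative estimates I would specialise this computation to $k=0$. The interior part of $\frac{\mathrm{d}}{\mathrm{d}t}\int_{\mathcal{N}_t}\kappa^2\,\mathrm{d}s$ is $-2\int(\partial_s\kappa)^2+\int\kappa^4$, and the $L^4$ term is absorbed through the interpolation $\int\kappa^4\leq \varepsilon\int(\partial_s\kappa)^2+C_\varepsilon\bigl(\int\kappa^2\bigr)^3$, whose constant depends only on the length lower bound. The boundary contributions at each triple junction, of the schematic form $\sum_j \kappa^{pj}\partial_s\kappa^{pj}$, are rewritten as homogeneous cubic expressions in the three junction values $\kappa^{pj}$ by differentiating $\sum_j\tau^{pj}=0$ once in time and using the motion equation to express $\partial_s\kappa^{pj}$ at the junction; these too are absorbed by interpolation. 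The outcome is the inequality
$$
\frac{\mathrm{d}}{\mathrm{d}t}\int_{\mathcal{N}_t}\kappa^2\,\mathrm{d}s \;\leq\; C\Bigl(\int_{\mathcal{N}_t}\kappa^2\,\mathrm{d}s\Bigr)^{3},
$$
which, integrated from $t$ to $T$ using that the left-hand side must diverge at $T$ (by the first part of the proof), gives $\int_{\mathcal{N}_t}\kappa^2\,\mathrm{d}s\geq C/\sqrt{T-t}$. The same inequality, read forward from an arbitrary time, shows that once the $L^2$ norm of the curvature is large it remains large on a time interval reaching $T$, so $\limsup$ in fact equals $\lim$. The pointwise bound follows at once from $\int_{\mathcal{N}_t}\kappa^2\,\mathrm{d}s \leq L(\mathcal{N}_t)\,\max_{\mathcal{N}_t}\kappa^2\leq L(\mathcal{N}_0)\,\max_{\mathcal{N}_t}\kappa^2$, using that the total length is non-increasing along the gradient flow of $L$.

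The main obstacle, which distinguishes the argument from the classical one for closed curves or for smooth hypersurfaces, is the systematic control of the boundary terms at triple junctions at every level of the bootstrap: one cannot integrate by parts for free, and the junction conditions together with their time and arclength derivatives must be unfolded in a precise way to produce algebraic expressions in $\kappa$ and $\partial_s\kappa$ that are amenable to interpolation. No essentially new phenomenon appears beyond this bookkeeping, which is however the technical core of the proof.
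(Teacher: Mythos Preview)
The paper does not actually prove Proposition~\ref{longtime}; it quotes the result from~\cite{MaManNov,ManNovTor}. Your sketch reproduces the standard argument from those references, and in fact the key step---the differential inequality $\partial_t\int\kappa^2\,\mathrm{d}s\leq C\bigl(\int\kappa^2\,\mathrm{d}s\bigr)^3+C$---is carried out verbatim in the present paper as Lemma~\ref{kappa2}. The treatment of the triple--junction boundary term there is precisely relation~\eqref{allagiunzione}, which turns $\sum_i\bigl(2\kappa^{pi}\partial_s\kappa^{pi}+\zeta^{pi}(\kappa^{pi})^2\bigr)$ into $-\sum_i\zeta^{pi}(\kappa^{pi})^2$, a homogeneous cubic in the junction curvatures since $\zeta^{pi}$ is a linear combination of the $\kappa^{pj}$'s by~\eqref{zetafunzdik}. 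So your outline is correct and is essentially the paper's (cited) approach.

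One wording issue: your sentence ``once the $L^2$ norm of the curvature is large it remains large'' is backwards. The inequality $y'\leq Cy^3$ bounds growth from \emph{above}, not from below; it does not prevent $y$ from dropping. The correct logic is the contrapositive you used just before: if $y(t_0)^2<1/\bigl(2C(T-t_0)\bigr)$, then the ODE comparison keeps $y$ bounded on $[t_0,T)$, contradicting $\limsup_{t\to T}y=+\infty$ from the first part. This already gives $y(t)\geq C/\sqrt{T-t}$ for every $t$, and then $\lim_{t\to T}y(t)=+\infty$ (hence $\limsup=\lim$) is immediate---no further argument is needed.
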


\subsection{Monotonicity formula and Multiplicity--One--Conjecture}

The problem we tackle now is analyze 
possible singularities arising at the maximal time of existence. To this aim we introduce a rescaling procedure and a monotonicity formula, adapted to our situation and
originally due to Huisken, that enable us to better describe the singularities.

\medskip

To determine and predict how singularities form, homothetically shrinking solutions play an important role.

\begin{defn}
A time--dependent family of networks $\mathcal{N}_t$ parametrized by $\{\gamma^i\}_{i=1}^N$ is a shrinking self--similar
solution to the motion by curvature if each evolving curve has the form
\begin{equation}\label{selfsim}
\gamma^i(t,x)=\lambda(t)\eta^i(x)\,,
\end{equation}
with $\lambda(t)>0$ and $\lambda(t)'<0$.
\end{defn}

Plugging our ansatz~\eqref{selfsim} into the motion equation $\left\langle\partial_t\gamma(t,x),\nu(t,x)\right\rangle=\kappa(t,x)$ and fixing $\lambda(-\frac{1}{2})=1$ it is easy to see that
\begin{equation}
\lambda(t)=\sqrt{-2t}\,,\quad \kappa^i(x)+\left\langle\eta^i(x),\nu^i(x)\right\rangle=0\,.\label{shrinkereq}
\end{equation}

\begin{defn}
The last relation is called the shrinker equation.
A (static) network whose curves satisfy the  equation
$$
\kappa^i(x)+\left\langle\eta^i(x),\nu^i(x)\right\rangle=0
$$
is called shrinker.
\end{defn}

\medskip

Let us introduce a parabolic rescaling procedure and the notion of reachable points
(see~\cite[Section 7, Section 10]{ManNovPluSchu} for more details).

We can describe the solution to the motion by curvature 
as a map from a reference network $\mathcal{N}$ to $\mathbb{R}^2$.
For the time being  let $F:\mathcal{N}\times [0,T)\to\mathbb{R}^2$
be the curvature flow of a regular network in its maximal time interval of existence. 

\medskip

We define the set of {\em reachable points} of the flow as:
$$
\mathcal{R} = \bigl\{ p \in \R^2\,\bigl\vert\,\text{ there exist $p_i
\in \mathcal{N}$ and $t_i \nearrow T$ such that $\lim_{i \to \infty}F(p_i,  t_i) = p$}\bigr\}\,.
$$
Such a set is easily seen to be closed and contained in
$\overline{\Omega}$ (hence compact as $\Omega$ is bounded).
Moreover a point $p \in \R^2$ belongs to $\mathcal{R}$ if and only if for every
time $t \in [0,T)$ the closed ball with center $x$ and radius
$\sqrt{2(T-t)}$ intersects $\mathcal{N}_t$.

\medskip

\begin{defn}\label{pararesc}
For a fixed $\lambda > 0$  we define the 
parabolic rescaling around a
space--time point $(p_0,t_0)$ of $F$ as the family of maps
\begin{equation*}
F^\lambda_\tau = \lambda \big(F(\cdot, \lambda^{-2}\tau + t_0) - p_0\big)\,, 
\end{equation*}
where $\tau \in [-\lambda^2 t_0, \lambda^2(T-t_0))$. 
\end{defn}

After this rescaling we still have that $F^\lambda_\tau $ is a 
curvature flow in the domain $\lambda (\Omega -p_0)$ with a new time
parameter $\tau$. 
With a slight abuse of notation we relabel it as $\mathcal{N}^\lambda_\tau$.

\medskip

Let $(p_0,t_0)\in \mathbb{R}^2\times (0,+\infty)$  and
$\rho_{p_0,t_0}:\mathbb{R}^2\times[-\infty,t_0)\to\mathbb{R}$ be the one--dimensional 
backward   heat kernel in $\mathbb{R}^2$ relative to $(p_0,t_0)$, that is,
\begin{equation*}
\rho_{p_0,t_0}(p,t)=\frac{e^{-\frac{\vert p-p_0\vert^2}{4(t_0-t)}}}{\sqrt{4\pi(t_0-t)}}\,.
\end{equation*}

\begin{defn}
For every $p_0\in\R^2, t_0 \in(0,+\infty)$ we define the Gaussian density function $\Theta_{p_0,t_0}:[0,\min\{t_0,T\})\to\R$ as 
\begin{equation*}
\Theta_{p_0,t_0}(t)=\int_{\mathcal{N}_t}\rho_{p_0,t_0}(\cdot,t)\,\mathrm{d}s\,, 
\end{equation*}
and, provided $t_0\leq T$, the limit Gaussian density function 
$\widehat{\Theta}:\R^2\times (0,+\infty)\to\R$ as
\begin{equation*}
\widehat{\Theta}(p_0,t_0)=\lim_{t\to t_0}\Theta_{p_0,t_0}(t)\,.
\end{equation*}
which exists, is finite and nonnegative for every $(p_0,t_0)\in\R^2\times(0,T]$.
\end{defn}

Given a sequence $\lambda_i\nearrow +\infty$ and a space--time point
$(p_0,t_0)$, where $0<t_0\leq T$, we then consider the sequence of parabolically rescaled
curvature flows $\mathcal{N}^{\lambda_i}_\tau$ in the whole $\R^2$
(see Definition~\ref{pararesc}).
Suitably adapting Huisken's monotonicity formula for this network flows 
and changing variables according to the parabolic rescaling introduced above we get
\begin{align*}
\Theta_{p_0,t_0}(t_0+\lambda_i^{-2}\tau)-\widehat\Theta(p_0,t_0)
=&\,\int_{\tau}^{0}\int
_{\mathcal{N}^{\lambda_i}_\sigma}\;
\Big|\vec{k}^i-\frac{x^\perp}{2\sigma}\Big|^2\rho_{0,0}(\cdot,\sigma)\,
\mathrm{d}s\,\mathrm{d}\sigma\\ 
&\,+\sum_{r=1}^\ell\int_{\tau}^{0} 
\left[ \left\langle\frac{P_i^r}{2\sigma}\,\biggr\vert\,
\tau(P_i^r,\sigma)\right\rangle
\right]
\rho_{0,0}(P_i^r,\sigma)\,\mathrm{d}\sigma\,,
\end{align*}
where $P^r_i = \lambda_i(P^r-p_0)$ and $\vec{k}^i$ are the rescaled curvatures.
Sending $i\to\infty$ for every $\tau\in(-\infty, 0)$ one gets
\begin{equation*}
\lim_{i\to\infty}\,\int_{\tau}^{0}\int
\limits_{\mathcal{N}^{\lambda_i}_\sigma}
\Big|\vec{k}^i- \frac{x^\perp}{2\sigma}\Big|^2\rho_{0,0}(\cdot,\sigma)\,
\mathrm{d}s\,\mathrm{d}\sigma=0\,.
\end{equation*}
This result is non trivial, also because of the presence of the boundary terms,
we refer to~\cite[Section 7]{ManNovPluSchu} for further details.

The monotonicity formula is the key ingredient to obtain the following:

\begin{prop}\label{convergenza-a-shrinkers}
Let $\mathcal{N}^{\lambda_i}_\tau$ be a sequence of parabolically rescaled curvature flows (see Definition~\ref{pararesc}).
Then  $\mathcal{N}^{\lambda_i}_\tau$ converges (up to subsequences)
 to a (possibly empty) 
self--similarly shrinking network flow $\mathcal{N}^\infty_\tau$ 
and   for  almost all $\tau \in (-\infty, 0)$ 
in $C^{1,\alpha}\loc \cap W^{2,2}\loc$  for any $\alpha \in (0,1/2)$ and 
 in the sense of measures for all $\tau \in (-\infty, 0)$ .
\end{prop}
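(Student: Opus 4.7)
My plan is to upgrade the weighted $L^2$ statement $\vec{k}^i \to x^\perp/(2\sigma)$ coming from Huisken's monotonicity formula (as recorded in the two displays just before the statement) into genuine geometric convergence, and then identify the limit as a shrinker. The three key ingredients are uniform local length bounds, uniform local $L^2$ curvature bounds, and parametric compactness for regular networks with bounded length and bounded $L^2$ curvature.

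\textbf{Step 1 (local mass bounds).} Evaluating Huisken's monotonicity at variable basepoints $(q,s)$ in a compact subset of $\R^2 \times (0,+\infty)$ and using the finiteness of $\widehat{\Theta}(q,s)$, I would obtain a uniform upper bound on the Gaussian density of $\mathcal{N}^{\lambda_i}_\sigma$. A standard covering argument turns this into a uniform local length estimate $\mathcal{H}^1(\mathcal{N}^{\lambda_i}_\sigma \cap B_R) \le C(R)$ on every compact ball and every $\sigma$ in a bounded subinterval of $(-\infty,0)$.

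\textbf{Step 2 ($L^2$ curvature and parametric compactness).} Since $\rho_{0,0}(\cdot,\sigma)$ is bounded below on $B_R$ for $\sigma$ in a compact subinterval of $(-\infty,0)$, the vanishing of the bulk integral
\[
\int_{\tau}^{0}\int_{\mathcal{N}^{\lambda_i}_\sigma}\Bigl|\vec{k}^i-\frac{x^\perp}{2\sigma}\Bigr|^2\rho_{0,0}(\cdot,\sigma)\,ds\,d\sigma\to 0,
\]
combined with Step~1, yields, for a.e.\ $\sigma\in(-\infty,0)$, a uniform bound $\int_{\mathcal{N}^{\lambda_i}_\sigma\cap B_R}|\vec{k}^i|^2\,ds\le C(R,\sigma)$. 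Together with the $120^\circ$ angle conditions at the triple junctions, an Ascoli--Arzel\`a argument applied to arclength reparametrizations of the branches produces a subsequence converging locally in $C^{1,\alpha}\cap W^{2,2}$, for every $\alpha\in(0,1/2)$, to a (possibly empty) regular limit network $\mathcal{N}^\infty_\sigma$. A diagonal extraction over a countable dense set of times and radii gives a single subsequence that works simultaneously.

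\textbf{Step 3 (shrinker identification and measure convergence).} Passing to the limit in the vanishing integral yields the pointwise shrinker equation $\kappa+\langle x,\nu\rangle/(-2\sigma)=0$ on each branch of $\mathcal{N}^\infty_\sigma$, for a.e.\ $\sigma<0$; by the self-similarity scaling $\mathcal{N}^\infty_\tau=\sqrt{-2\tau}\,\mathcal{N}^\infty_{-1/2}$, the whole family $\mathcal{N}^\infty_\tau$ is then a self-similarly shrinking flow. The convergence in the sense of measures for \emph{every} $\tau\in(-\infty,0)$ follows from the monotonicity formula, which pins down the value of the Gaussian density at each $\tau$, combined with upper/lower semicontinuity of mass under varifold convergence.

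\textbf{Main obstacle.} The delicate point is controlling the boundary terms $\sum_r\int_\tau^0\langle P_i^r/(2\sigma),\tau(P_i^r,\sigma)\rangle\,\rho_{0,0}(P_i^r,\sigma)\,d\sigma$ in the monotonicity identity: when $p_0$ is an interior reachable point, $|P_i^r|=\lambda_i|P^r-p_0|\to\infty$ makes the Gaussian factor decay super-exponentially, but the prefactor grows linearly in $\lambda_i$, so a quantitative estimate uniform in $\sigma$ on compact subintervals is required. When $p_0$ coincides with some end-point $P^r$, the corresponding boundary contribution persists and must be tracked separately. Both issues are handled in Section~7 of~\cite{ManNovPluSchu}, and I would simply invoke those computations.
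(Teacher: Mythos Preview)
The paper does not give an in-text argument for this proposition; its entire proof is the reference ``See~\cite[Proposition~8.16]{ManNovPluSchu}.'' Your sketch is precisely the standard argument one finds there: uniform length-ratio bounds from the monotonicity formula, local $L^2$ curvature bounds for a.e.\ rescaled time coming from the vanishing weighted integral, Arzel\`a--Ascoli/Rellich compactness on arclength reparametrizations to get $C^{1,\alpha}\loc\cap W^{2,2}\loc$ subsequential limits, and passage to the limit in the shrinker equation. Your handling of the end--point boundary terms by invoking Section~7 of~\cite{ManNovPluSchu} is also exactly what is done.

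One point in Step~3 is stated rather than argued: you write the scaling relation $\mathcal{N}^\infty_\tau=\sqrt{-2\tau}\,\mathcal{N}^\infty_{-1/2}$ as if it were automatic once each time-slice satisfies its own shrinker equation. That the slices assemble into a single self-similarly shrinking \emph{flow} needs one more step: either one extracts the limit as a Brakke flow (so the limit is itself a curvature flow with constant Gaussian density, hence backward self-similar by the rigidity case of Huisken's formula), or one shows directly that the subsequential limit at each time is determined by the common value $\widehat\Theta(p_0,t_0)$ and hence the slices at different $\tau$ are dilates of one another. Either route is routine in this setting, but the sentence as written hides it.
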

\begin{proof}
See~	\cite[Proposition 8.16]{ManNovPluSchu}.
\end{proof}

The flows obtained as a limit of parabolically rescaled curvature flows
(see Definition~\ref{pararesc}).
go under the name of tangent flow.
Now we would like to classify the possible
self--similarly shrinking network flows
arising as limit of parabolically rescaled curvature flows.
We state now a conjecture about their nature.

\medskip

\textbf{Multiplicity--One Conjecture (M1)}:
Every limit of parabolic rescalings at a point $p_0\in\overline{\Omega}$ 
is a flow of embedded networks with multiplicity one.

\begin{thm}
Let $\mathcal{N}_0$ be a initial regular network of class $C^2$ in 
a strictly convex, bounded and open set
$\Omega\subseteq\mathbb{R}^2$  and
let $\mathcal{N}_t$ be a maximal solution to the motion by curvature in $[0,T)$
with initial datum $\mathcal{N}_0$. 
Suppose that
\begin{itemize}
\item either $\mathcal{N}_0$ has at most two triple junctions;
\item or $\mathcal{N}_0$ is a tree and 
if for all $t\in [0,T)$
the triple junctions  of $\mathcal{N}_t$ 
remains uniformly far from each others and from the vertices fixed on 
$\partial\Omega$.
\end{itemize}
Then, the Multiplicity--One Conjecture holds true.
\end{thm}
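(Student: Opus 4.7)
The plan is to analyse the tangent flow at an arbitrary space-time point and exploit the topological restrictions on the network to bound the complexity of the limit shrinker and thereby rule out higher multiplicities.

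First I fix $(p_0,t_0)\in\overline{\Omega}\times(0,T]$ and a sequence $\lambda_i\to+\infty$. By Proposition~\ref{convergenza-a-shrinkers}, up to a subsequence the rescaled flows $\mathcal{N}^{\lambda_i}_\tau$ of Definition~\ref{pararesc} converge, in $C^{1,\alpha}\loc\cap W^{2,2}\loc$ for almost every $\tau<0$ and in the sense of measures for every $\tau<0$, to a self-similarly shrinking flow $\mathcal{N}^\infty_\tau$ whose slices satisfy $\vec{k}+x^{\perp}/(2\sigma)=0$. The aim is to show that $\mathcal{N}^\infty_\tau$ is embedded of multiplicity one.

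The first main step is to bound the topology of $\mathcal{N}^\infty_\tau$. Since no curve has vanished on $[0,t_0]$, the number of triple junctions and the absence of loops are preserved by the smooth flow. In the first case, $\mathcal{N}_t$ — and hence every $\mathcal{N}^{\lambda_i}_\tau$ — has at most two triple junctions; lower semicontinuity under the convergence of Proposition~\ref{convergenza-a-shrinkers} gives the same bound for the limit. In the second case, the uniform positive distance $\delta_0$ between triple junctions, and between triple junctions and the fixed end-points $P^r$, rescales to $\lambda_i\delta_0\to+\infty$, so on any bounded window $\mathcal{N}^\infty_\tau$ contains at most one triple junction; together with the no-loops condition this forces the limit shrinker to be either a straight line through the origin or a standard triod centred at the origin.

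The second main step, and the main obstacle, is to rule out multiplicity $m\geq 2$. By the monotonicity formula the Gaussian density at $(p_0,t_0)$ equals $m$ times the density of the underlying shrinker. Since a doubled line and a standard four-valent cross both have density $2$, the problem cannot be decided by density bounds alone, and one must use the topological control from the first step. If $\mathcal{N}^\infty_\tau$ carried multiplicity at least two along some arc, two disjoint portions of $\mathcal{N}^{\lambda_i}_\tau$ would converge to the same limit curve; by Definition~\ref{pararesc}, in the original scale these correspond to two disjoint portions of $\mathcal{N}_t$ whose distance collapses at rate $1/\lambda_i$. In the tree case, the unique network path joining these portions must pass through at least one triple junction, which by hypothesis lies at distance $\lambda_i\delta_0\to+\infty$ in the rescaled picture; but the two arcs must concentrate on a bounded set of the limit shrinker, and this is incompatible with the only admissible limits (line and triod) — two parallel shrinker lines must both pass through the origin and therefore coincide. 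In the first case, the only combinatorial mechanism for two disjoint arcs to coalesce under the rescaling is the collision of the two triple junctions, which produces a standard cross of multiplicity one rather than a doubled curve. In either case $\mathcal{N}^\infty_\tau$ is of multiplicity one, and since $(p_0,t_0)$ was arbitrary the conjecture follows. Making the topological and measure-theoretic accounting of this last step fully rigorous, particularly in the tree case, is where the bulk of the technical work lies.
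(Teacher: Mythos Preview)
The paper does not prove this statement here; it simply cites \cite[Corollary~4.7]{ManNovPlu} for the first bullet and \cite[Proposition~14.13]{ManNovPluSchu} for the second. So there is no in--paper argument to compare against, but your proposal can be assessed on its own, and it has a genuine gap in each case.

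\textbf{First bullet.} You tacitly treat the network as a tree when you write that ``the only combinatorial mechanism for two disjoint arcs to coalesce under the rescaling is the collision of the two triple junctions, which produces a standard cross of multiplicity one''. The hypothesis ``at most two triple junctions'' does \emph{not} exclude loops: a theta network, a double bubble (lens), an eyeglasses network all qualify. For such initial data the tangent shrinkers are not restricted to lines, triods and crosses --- one may obtain the Brakke spoon, the lens shrinker, the fish, and so on --- and your combinatorial dichotomy simply does not apply. The proof in \cite{ManNovPlu} proceeds instead by classifying \emph{all} regular shrinkers with at most two triple junctions and controlling the Gaussian densities case by case; nothing in your outline substitutes for that classification.

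\textbf{Second bullet.} Your observation that all but at most one rescaled triple junction escape to infinity is correct and does pin down the possible \emph{support} of the limit (line, halfline or standard triod). But the multiplicity step is incomplete. The sentence ``two parallel shrinker lines must both pass through the origin and therefore coincide'' confuses a multiplicity--two line with two distinct multiplicity--one lines; it says nothing about why a single line cannot be covered twice by the approximating networks. And when $p_0$ is the limit of one triple junction, that junction stays at bounded distance after rescaling, so your ``path through a far junction'' argument fails exactly in the triod case. Excluding higher multiplicity here requires an actual geometric input --- in \cite{ManNovPluSchu} this comes from an embeddedness/separation argument using the convexity of $\Omega$ and the tree structure --- which your outline does not supply. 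Your own final sentence concedes as much; as written, this is a strategy sketch rather than a proof.
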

\begin{proof}
See~\cite[Corollary 4.7]{ManNovPlu} and~\cite[Proposition 14.13]{ManNovPluSchu}
\end{proof}

\section{Analysis of tangent flows}\label{blowuplim}

From now on we shall always assume that 
the Multiplicity--One Conjecture holds true.

\begin{defn}
A standard triod is a network composed of 
three straight halflines that meet at the origin forming angles of $120$ degrees.
A standard cross is the union of two lines intersecting at the origin forming angles of $120$
and $60$ degrees.
\end{defn}

\begin{figure}[H]
\begin{center}
\begin{tikzpicture}[scale=0.7]
\draw
(0,0) to [out=90, in=-90, looseness=1] (0,2)
(0,0) to [out=210, in=30, looseness=1] (-1.73,-1)
(0,0) to [out=-30, in=150, looseness=1](1.73,-1);
\draw[dashed]
(0,2) to [out=90, in=-90, looseness=1] (0,3)
(-1.73,-1) to [out=210, in=30, looseness=1] (-2.59,-1.5)
(1.73,-1) to [out=-30, in=150, looseness=1](2.59,-1.5);
\fill(0,0) circle (2pt);
\path[font=\normalsize]
(.1,.2) node[left]{$\mathcal{O}$};
\end{tikzpicture}\qquad\qquad\qquad
\begin{tikzpicture}[scale=0.3]
\draw[color=black]
(0,0)to[out= 120,in=-60, looseness=1] (-2,3.46)
(0,0)to[out= -120,in=60, looseness=1] (-2,-3.46)
(0,0)to[out= 60,in=-120, looseness=1] (2,3.46)
(0,0)to[out=-60,in=120, looseness=1] (2,-3.46);
\draw[color=black,dashed]
(-2,3.46)to[out= 120,in=-60, looseness=1] (-3,5.19)
(-2,-3.46)to[out= -120,in=60, looseness=1] (-3,-5.19)
(2,3.46)to[out= 60,in=-120, looseness=1] (3,5.19)
(2,-3.46)to[out=-60,in=120, looseness=1] (3,-5.19);
\fill(0,0) circle (4pt);
\path[font=\normalsize]
(1.1,-.35) node[above]{$\mathcal{O}$};
\end{tikzpicture}
\end{center}
\begin{caption}{A standard triod and a standard cross.}\label{standard}
\end{caption}
\end{figure}

\begin{lem}\label{lemmatree}
Let $\mathcal{S}$ be a shrinker which is $C^1\loc\cap W^{2,2}\loc$--limit of
a sequence $\mathcal{N}_n$
of homeomorphic regular tree--like connected networks
with non--compact branches and without external vertices. 
Then $\mathcal{S}$ consists of halflines from the origin.\\
Moreover, if we assume that $\mathcal{S}$ is a network with unit multiplicity, 
then $\mathcal{S}$ can only be
\begin{itemize}
\item a line,
\item a standard triod,
\item  a standard cross.
\end{itemize}
\end{lem}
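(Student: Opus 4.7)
The plan is to prove the two parts of the lemma separately. First, without using unit multiplicity, I will show that every arc of $\mathcal{S}$ is a halfline emanating from the origin. Then, under unit multiplicity, a direction analysis at the multi--junction at the origin combined with planar embeddability of the approximating tree networks $\mathcal{N}_n$ will reduce the admissible configurations to the line, the standard triod, and the standard cross.

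For the first part, each smooth arc of $\mathcal{S}$ satisfies the shrinker equation $\kappa + \langle\eta,\nu\rangle = 0$, and by the Abresch--Langer classification of complete embedded planar shrinker curves the only possibilities are straight lines through the origin and the circle centered at the origin of appropriate radius. Since each $\mathcal{N}_n$ is a tree and tree--likeness of the support is preserved under the $C^1\loc$--convergence, $\mathcal{S}$ contains no closed loop, so all circle components are excluded and every arc of $\mathcal{S}$ lies on a line through the origin $O$. I will then rule out junctions of $\mathcal{S}$ at any point $p\neq O$: each arc incident at $p$ lies on the unique line $\overline{Op}$, so its tangent at $p$ is $\pm\overrightarrow{Op}/|\overrightarrow{Op}|$; at most two arcs can fit, in opposite directions, giving a smooth passage rather than a real junction. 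Together with the absence of external vertices inherited from $\mathcal{N}_n$, this forces every arc of $\mathcal{S}$ to have both endpoints either at $O$ or at infinity, so each arc is a halfline from the origin. The possibility that an arc is a bounded piece of the circle, or of a non--embedded Abresch--Langer curve, is ruled out similarly, since the junctions at its endpoints would have to lie at points $p\neq O$, a possibility just excluded.

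For the second part, let $L$ be the number of halflines of $\mathcal{S}$. Under unit multiplicity they point in pairwise distinct directions. The multi--junction at $O$ is the $C^1\loc$--limit of a subtree $\mathcal{T}_n\subset\mathcal{N}_n$ all of whose internal vertices are $120^\circ$ triple junctions. Propagating the rule ``the three outward tangents at a trivalent vertex differ by $120^\circ$'' along each edge path in $\mathcal{T}_n$ shows that every outgoing halfline of $\mathcal{S}$ at $O$ has direction differing from a fixed reference direction by an integer multiple of $60^\circ$, so $L\leq 6$. Summing the triple--junction sum--zero condition over the internal vertices of $\mathcal{T}_n$, the internal edges cancel pairwise and one obtains that the outward tangents at the leaves sum to zero; this property survives in the limit, so the $L$ halfline tangents at $O$ sum to zero. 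The cases $L\leq 4$ are then immediate: $L=2$ antipodal gives a line, $L=3$ at $120^\circ$ gives the standard triod, and $L=4$ selected from the six sixth roots of unity with zero sum must split into two antipodal pairs $60^\circ$ apart, giving the standard cross. The case $L=5$ is impossible because any sum of five distinct sixth roots of unity equals minus the sixth one, hence is nonzero.

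The main obstacle is excluding the $L=6$ hexagonal configuration, which satisfies all of the algebraic constraints. Here I would distinguish the two topological types of a trivalent tree with four internal vertices and six leaves, namely the star--like tree (central trivalent vertex connected to three peripheral trivalent vertices each carrying two leaves) and the chain $A$--$B$--$C$--$D$ (with two leaves at $A$ and at $D$ and one leaf at each of $B$, $C$). For the star--like tree, propagating the $120^\circ$--relation forces three hexagonal directions to each appear twice among the six leaves, immediately violating unit multiplicity. For the chain, the only assignment compatible with unit multiplicity produces all six hexagonal directions, but then the cyclic order of these six leaf directions around infinity interleaves leaves belonging to disjoint subbranches of the chain; a planar embedding of $\mathcal{T}_n$ with this interleaving is impossible, so two of the outgoing arcs of $\mathcal{T}_n$ must cross in $\mathbb{R}^2$, contradicting the embeddedness of $\mathcal{N}_n$. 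Carrying out this cyclic--order / planarity check for every subtree topology is the technical heart of the argument.
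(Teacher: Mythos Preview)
Your Part~1 has a genuine circularity. You invoke the Abresch--Langer classification of \emph{complete embedded} shrinker curves to conclude that each arc of $\mathcal{S}$ lies on a line through the origin, but an arc of $\mathcal{S}$ between two junctions is not a complete curve; it is only a piece of some Abresch--Langer solution, and that solution may well be one of the compact winding curves (which are not embedded as complete curves, yet whose short subarcs are perfectly embedded). You then use ``every arc lies on the line $\overline{Op}$'' to exclude junctions at $p\neq O$, and finally appeal back to ``no junction at $p\neq O$'' to rule out bounded pieces of non--straight Abresch--Langer curves. This loop in the logic is not closed by anything you have written. In fact the very observation you make---that arcs along $\overline{Op}$ can only supply the two directions $\pm\overrightarrow{Op}/|\overrightarrow{Op}|$---shows that at a genuine triple junction $p\neq O$ (with three tangents at $120^\circ$) at least two of the concurring arcs must be pieces of \emph{non--straight} Abresch--Langer curves, exactly the case you have not handled. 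The paper proceeds differently here: it accepts this possibility and then follows such a non--straight arc from $P$ to its other endpoint $Q$, argues that at $Q$ there is again a different non--straight arc exiting, and iterates; since the underlying combinatorics is that of the finite tree $\mathcal{N}_n$, this walk must revisit a vertex, producing a cycle in a tree, which is the contradiction. Your outline needs an argument of this type (or an equivalent one) to bridge the gap.

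Your Part~2 is essentially correct and parallel in spirit to the paper, though organized differently. The paper works with the ``longest simple path'' in the core and the rule that one cannot turn $60^\circ$ in the same direction at two consecutive junctions without forcing a self--intersection of the embedded $\mathcal{N}_n$; this yields the direction constraints and the case analysis for $N=2,3,4$ collapsed junctions directly. Your route via the sum--zero identity for the leaf tangents and the planar cyclic--order obstruction reaches the same conclusions; note, however, that both the direction--propagation and the pairwise cancellation of internal edges in your sum--zero argument require the collapsing edges of $\mathcal{T}_n$ to be asymptotically straight, which is exactly what the $W^{2,2}\loc$--bound together with $L(\gamma_n)\to 0$ gives (the paper states and uses this explicitly; you are using it implicitly). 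Your planarity obstruction for the $L=6$ chain is really the same phenomenon as the paper's ``two consecutive $60^\circ$ turns in the same direction force a crossing'': in the unique multiplicity--one labeling of the chain $A\!-\!B\!-\!C\!-\!D$, the internal edge directions are $\alpha,\alpha+60^\circ,\alpha+120^\circ$, i.e.\ two same--sign turns in a row, so your cyclic--order check and the paper's turning argument are two phrasings of one obstruction.
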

\begin{proof} 
First of all we notice that the length of some curves can go to zero in the limit.
The \emph{core} of the limit network is the union of these vanishing curves. 

Suppose that the length of
at least one curve of $\mathcal{N}_n$, let us say $\gamma_n$, goes to zero  in the limit,
namely $\mathcal{S}$ has a core at some point $P\in\mathcal{S}$.
Since $\mathcal{N}_n$ is a sequence of trees,
if $N\geq 2$ triple junctions are contained in the core, then $N+2$
curves (counted with multiplicity) with strictly positive length concur at $P$.
This fact can be easily proved by induction: if $N=2$, then two triple junctions
are present in the core
and hence the length
of the curve connecting the two junctions has gone to zero in the limit,
but the other four curves emanating from the two different junctions have still positive length. Suppose now that the statement holds true for $N=\widetilde{N}$. 
Let now $N=\widetilde{N}+1$. With respect to the situation in which
$\widetilde{N}$ triple junctions are in the core, we 
add an extra triple junction $\mathcal{O}$ to the core, but
to do so one of the original $\widetilde{N}+2$ curves emanating from the core
has to go to zero. However 
the other two concurring curve to $\mathcal{O}$ have length bounded from below away from zero and now concur to $P$, thus there are 
$(\widetilde{N}+2)-1+2=\widetilde{N}+3$ curves with strictly positive length concurring at $P$
and the claim is proved.

We can suppose (up to reparametrization)
that for every $n\in\mathbb{N}$ the immersion $\gamma_n:[0,1]\to\mathbb{R}^2$
is a regular parametrization with constant speed equal to its length.
We get
\begin{equation*}
\lim_{n\to\infty}\,\sup_{x,y\in [0,1]} \left\lvert 
{\tau_n(x)-\tau_n(y)}\right\rvert=0\,.
\end{equation*}
Indeed,  given $x,y\in[0,1]$, we get
$$
 \left\lvert 
{\tau_n(x)-\tau_n(y)}\right\rvert= \left| \int_{s(x)}^{s(y)} \partial_s\tau_{n}\,\mathrm{d}s\,\right|\leq  \int_{\gamma_n} |\vec{k}_n| \, \mathrm{d}s\leq \left( \int_{\gamma_n} |\vec{k}_n|^2 \, \mathrm{d}s \right)^{1/2} L(\gamma_n)^{1/2}\,.
$$

We then obtain the desired result by passing to the limit.
Hence the vanishing curves of $\mathcal{N}_n$ are straighter and straighter
as $n\to\infty$ and for $n$ big enough we can suppose that the unit tangent vectors
of points of the same curve
does not change direction.

We describe now the structure of the core.
Let $n\in\mathbb{N}$ be sufficiently big and 
consider the the longest simple ``path'' of curves of  $\mathcal{N}_n$ that belongs to the core 
of $\mathcal{S}$ at $P$. We orient the path and follow its edges.
Then unit tangent vectors (possibly changed of sign on some edges in order to coincide with the orientation of the path) cannot turn of an angle of $60$ degrees in the same direction for two consecutive times, otherwise, since $\mathcal{N}_n$ is a sequence of trees with triple junctions, without external vertices and with non--compact branches, $\mathcal{N}_n$ must have a self--intersection (see Figure~\ref{fig6}). 
\begin{figure}[H]
\begin{center}
\begin{tikzpicture}[scale=0.3,rotate=90]
\draw[color=black]
(-1,0)to[out= 0,in=180, looseness=1](1,0)
(3,-3.46)to[out= 0,in=180, looseness=1](5,-3.46)
(-3,-3.46)to[out= 0,in=180, looseness=1](-5,-3.46)
(-1,0)to[out= 120,in=-60, looseness=1] (-3,3.46)
(-1,0)to[out= -120,in=60, looseness=1] (-3,-3.46)
(1,0)to[out= 60,in=-120, looseness=1] (3,3.46)
(1,0)to[out=-60,in=120, looseness=1] (3,-3.46)
(-3,-3.46)to[out=-60,in=120, looseness=1] (-0.12,-8.45)
(0.1,-8.823)to[out=-60,in=120, looseness=1] (1,-10.38)
(3,-3.46)to[out=-120,in=60, looseness=1] (-1,-10.38);
\draw[color=black,dashed]
(5,-3.46)to[out= 0,in=180, looseness=1](6.4,-3.46)
(-5,-3.46)to[out= 0,in=180, looseness=1](-6.2,-3.46)
(-3,3.46)to[out= 120,in=-60, looseness=1] (-4,5.19)
(3,3.46)to[out= 60,in=-120, looseness=1] (4,5.19);
\draw[very thick,shift={(1,0)},scale=2,color=black,rotate=210]
(0,0)to[out= 90,in=-90, looseness=1](0,0.75)
(0,0.75)to[out= -45,in=135, looseness=1](0.15,0.6)
(0,0.75)to[out= -135,in=45, looseness=1](-0.15,0.6);
\draw[very thick,shift={(-1,0)},scale=2,color=black,rotate=-90]
(0,0)to[out= 90,in=-90, looseness=1](0,0.75)
(0,0.75)to[out= -45,in=135, looseness=1](0.15,0.6)
(0,0.75)to[out= -135,in=45, looseness=1](-0.15,0.6);
\draw[very thick, shift={(-3,-3.46)},scale=2,color=black,rotate=330]
(0,0)to[out= 90,in=-90, looseness=1](0,0.75)
(0,0.75)to[out= -45,in=135, looseness=1](0.15,0.6)
(0,0.75)to[out= -135,in=45, looseness=1](-0.15,0.6);
\fill(1,0) circle (4pt);
\fill(-1,0) circle (4pt);
\fill(3,-3.46)circle (4pt);
\fill(-3,-3.46)circle (4pt);
\path[font=]
(-5.4,-8) node[below] {$\mathcal{N}_n$};
\end{tikzpicture}\qquad\qquad
\begin{tikzpicture}[scale=0.3,rotate=90]
\draw[color=black]
(0,0)to[out= 0,in=180, looseness=1](3,0)
(0,0)to[out= 0,in=180, looseness=1](-3,0)
(0,0)to[out= 120,in=-60, looseness=1] (-2,3.46)
(0,0)to[out= 60,in=-120, looseness=1] (2,3.46)
(0,0)to[out=-60,in=120, looseness=1](2,-3.46)
(0,0)to[out=-120,in=60, looseness=1] (-2,-3.46);
\draw[color=black,dashed]
(3,0)to[out= 0,in=180, looseness=1](4.5,0)
(-3,0)to[out= 0,in=180, looseness=1](-4.5,0)
(-2,3.46)to[out= 120,in=-60, looseness=1] (-3,5.19)
(2,3.46)to[out= 60,in=-120, looseness=1] (3,5.19)
(-2,-3.46)to[out= -120,in=60, looseness=1] (-3,-5.19)
(2,-3.46)to[out= -60,in=120, looseness=1] (3,-5.19);
\fill(0,0) circle (4pt);
\path[font=]
(-5.4,-8) node[below] {$\mathcal{S}$};
\end{tikzpicture}\qquad
\begin{tikzpicture}[scale=0.3,rotate=90]
\draw[color=black]
(-1,0)to[out= 0,in=180, looseness=1](1,0)
(-1,0)to[out= -120,in=60, looseness=1] (-3,-3.46)
(1,0)to[out=-60,in=120, looseness=1] (3,-3.46);
(-3,-3.46)to[out=-60,in=120, looseness=1] (-0.12,-8.45)
(0.1,-8.823)to[out=-60,in=120, looseness=1] (1,-10.38)
(3,-3.46)to[out=-120,in=60, looseness=1] (-1,-10.38);
\draw[very thick,shift={(1,0)},scale=2,color=black,rotate=210]
(0,0)to[out= 90,in=-90, looseness=1](0,0.75)
(0,0.75)to[out= -45,in=135, looseness=1](0.15,0.6)
(0,0.75)to[out= -135,in=45, looseness=1](-0.15,0.6);
\draw[very thick,shift={(-1,0)},scale=2,color=black,rotate=-90]
(0,0)to[out= 90,in=-90, looseness=1](0,0.75)
(0,0.75)to[out= -45,in=135, looseness=1](0.15,0.6)
(0,0.75)to[out= -135,in=45, looseness=1](-0.15,0.6);
\draw[very thick, shift={(-3,-3.46)},scale=2,color=black,rotate=330]
(0,0)to[out= 90,in=-90, looseness=1](0,0.75)
(0,0.75)to[out= -45,in=135, looseness=1](0.15,0.6)
(0,0.75)to[out= -135,in=45, looseness=1](-0.15,0.6);
\path[font=\footnotesize]
(-4,-5) node[below] {The core of $\mathcal{S}$};
\path[white, font=]
(-5.4,-8) node[below] {$\mathcal{S}$};
\end{tikzpicture}\qquad
\end{center}
\begin{caption}{If the unit tangent vector turns of an angle of $60$ degrees in the same direction for two consecutive times, $\mathcal{N}_n$ has self--intersections.\label{fig6}}
\end{caption}
\end{figure}
Hence when we run across such longest path and we pass from one edge to another,
if the unit tangent vector turns of an angle of $60$ degrees then it must turn back.
This means that at the initial/final point of such path, either the two unit tangent vectors are the same (when the number of edges is odd) or they differ of $60$ degrees (when the number of edges is even). By a simple check, we can then see that, in the first case the four curves
with positive length exiting from such initial/final points of the path, have four different exterior unit tangent vectors at $P$ (opposite in pairs), in the second case, they have three exterior unit tangent vectors at $P$ which are non--proportional each other.
\begin{figure}[H]
\begin{center}
\begin{tikzpicture}[scale=0.30]
\draw[color=black]
(2,-1.73)to[out= 0,in=180, looseness=1](4,-1.73)
(2,1.73)to[out= 0,in=180, looseness=1](4,1.73)
(2,1.73)to[out= 120,in=-60, looseness=1] (1,3.46)
(2,-1.73)to[out= -120,in=60, looseness=1] (1,-3.46)
(1,0)to[out= 60,in=-120, looseness=1] (2,1.73)
(1,0)to[out=-60,in=120, looseness=1] (2,-1.73);
\draw[color=black,dashed]
(4,-1.73)to[out= 0,in=180, looseness=1](5,-1.73)
(4,1.73)to[out= 0,in=180, looseness=1](5,1.73)
(1,3.46)to[out= 120,in=-60, looseness=1] (0,5.19);
(1,-3.46)to[out= -120,in=60, looseness=1] (0,-5.19);
\draw
(1,0)--(-1,0)
(1,-3.46)--(-1,-3.46)
(1,-3.46)--(2,-5.19);
\draw[dashed]
(2,-5.19)--(3,-6.92)
(-1,0)--(-2,0)
(-1,-3.46)--(-2,-3.46);
\fill(1,0) circle (3pt);
\fill(2,1.73) circle (3pt);
\fill(2,-1.73) circle (3pt);
\fill(1,-3.46) circle (3pt);
\path[font=](4,-5.73) node[below] {$\mathcal{N}_n$};
\draw[very thick, shift={(1,0)},scale=1,color=black,rotate=330]
(0,0)to[out= 90,in=-90, looseness=1](0,0.75)
(0,0.75)to[out= -45,in=135, looseness=1](0.15,0.6)
(0,0.75)to[out= -135,in=45, looseness=1](-0.15,0.6);
\draw[very thick, shift={(1,-3.46)},scale=1,color=black,rotate=330]
(0,0)to[out= 90,in=-90, looseness=1](0,0.75)
(0,0.75)to[out= -45,in=135, looseness=1](0.15,0.6)
(0,0.75)to[out= -135,in=45, looseness=1](-0.15,0.6);
\draw[very thick, shift={(2,-1.73)},scale=1,color=black,rotate=-330]
(0,0)to[out= 90,in=-90, looseness=1](0,0.75)
(0,0.75)to[out= -45,in=135, looseness=1](0.15,0.6)
(0,0.75)to[out= -135,in=45, looseness=1](-0.15,0.6);
\end{tikzpicture}\qquad\quad
\begin{tikzpicture}[scale=0.30]
\draw[color=black]
(0,1.73)to[out= 0,in=180, looseness=1](2,1.73)
(2,1.73)to[out= 0,in=180, looseness=1](4,1.73)
(2,1.73)to[out= 120,in=-60, looseness=1] (1,3.46)
(2,1.73)to[out= -60,in=120, looseness=1] (3,0);
\draw[color=black,dashed]
(-1,1.73)to[out= 0,in=180, looseness=1](0,1.73)
(4,1.73)to[out= 0,in=180, looseness=1](5,1.73)
(1,3.46)to[out= 120,in=-60, looseness=1] (0,5.19)
(3,0)to[out= -60,in=120, looseness=1] (4,-1.73);
\fill(2,1.73) circle (3pt);
\path[font=\footnotesize](3,1.53) node[above] {$2$};
\path[font=\footnotesize](1,1.90) node[below] {$2$};
\path[font=\footnotesize](0.6,3.66) node[below] {$1$};
\path[font=\footnotesize](3.4,-.2) node[above] {$1$};
\path[font=](3,-2.27) node[below] {$\mathcal{S}$};
\end{tikzpicture}\qquad\quad
\begin{tikzpicture}[scale=0.30]
\draw[color=black]
(-1,0)to[out= 0,in=180, looseness=1](1,0)
(2,1.73)to[out= 0,in=180, looseness=1](4,1.73)
(2,-1.73)to[out= 0,in=180, looseness=1](4,-1.73)
(2,1.73)to[out= 120,in=-60, looseness=1] (1,3.46)
(2,-1.73)to[out= -120,in=60, looseness=1] (1,-3.46)
(1,0)to[out= 60,in=-120, looseness=1] (2,1.73)
(1,0)to[out=-60,in=120, looseness=1] (2,-1.73);
\draw[color=black,dashed]
(-2,0)to[out= 0,in=180, looseness=1](-1,0)
(4,1.73)to[out= 0,in=180, looseness=1](5,1.73)
(4,-1.73)to[out= 0,in=180, looseness=1](5,-1.73)
(1,3.46)to[out= 120,in=-60, looseness=1] (0,5.19)
(1,-3.46)to[out= -120,in=60, looseness=1] (0,-5.19);
\fill(1,0) circle (3pt);
\fill(2,1.73) circle (3pt);
\fill(2,-1.73) circle (3pt);
\path[font=](3,-4) node[below] {$\mathcal{N}_n$};
\draw[very thick, shift={(1,0)},scale=1,color=black,rotate=330]
(0,0)to[out= 90,in=-90, looseness=1](0,0.75)
(0,0.75)to[out= -45,in=135, looseness=1](0.15,0.6)
(0,0.75)to[out= -135,in=45, looseness=1](-0.15,0.6);
\draw[very thick, shift={(2,-1.73)},scale=1,color=black,rotate=-330]
(0,0)to[out= 90,in=-90, looseness=1](0,0.75)
(0,0.75)to[out= -45,in=135, looseness=1](0.15,0.6)
(0,0.75)to[out= -135,in=45, looseness=1](-0.15,0.6);
\end{tikzpicture}
\qquad\quad
\begin{tikzpicture}[scale=0.30]
\draw[color=black]
(0,1.73)to[out= 0,in=180, looseness=1](2,1.73)
(2,1.73)to[out= 0,in=180, looseness=1](4,1.73)
(2,1.73)to[out= 120,in=-60, looseness=1] (1,3.46)
(2,1.73)to[out= -120,in=60, looseness=1] (1,0);
\draw[color=black,dashed]
(-1,1.73)to[out= 0,in=180, looseness=1](0,1.73)
(4,1.73)to[out= 0,in=180, looseness=1](5,1.73)
(1,3.46)to[out= 120,in=-60, looseness=1] (0,5.19)
(1,0)to[out= -120,in=60, looseness=1] (0,-1.73);
\fill(2,1.73) circle (3pt);
\path[font=\footnotesize](-0.2,1.53) node[above] {$1$};
\path[font=\footnotesize](3.5,1.53) node[above] {$2$};
\path[font=\footnotesize](2,3.46) node[below] {$1$};
\path[font=\footnotesize](2,0) node[above] {$1$};
\path[font=](3,-2.27) node[below] {$\mathcal{S}$};
\end{tikzpicture}\qquad\quad

\end{center}
\begin{caption}{Examples of the edges at the initial/final points of the longest simple path in $\mathcal{N}_n$ and of the relative curves in $\mathcal{S}$,
the numbers $1$ and $2$ denote their multiplicity.}
\end{caption}
\end{figure}

Consider the curves with positive length.
Each curve of the limit satisfies the shrinkers equation~\eqref{shrinkereq}.
They have been classified by Abresch and Langer~\cite{AbLa}.
There are basically two possibilities.
A branch of the network is non compact and it is 
either a straight line through the origin  or a straight halfline
with end--point the origin.
Otherwise, it is contained in a
compact subset of $\R^2$ and has a constant winding direction with
respect to the origin.

Our scope at this point is to exclude the presence of compact branches in $\mathcal{S}$. 
Let $P\not=(0,0)$ and suppose that either $P$ is a triple junction or there 
is a core at some point $P\not=0$.
Then there are at least two distinct non--straight Abresch--Langer curves arriving/starting at $P$.  Indeed, if at $P$ there is a triple junction none of the three curves meeting at $P$
 can be a line trough both the origin and $P$, neither a halfline from the origin and 
trough $P$.
Similarly, if an odd number of curves belongs to the core at $P$
 at most two curves can be such a straight lines/halflines 
 and at most one if an even number of curves belongs to the core.
 Let $\sigma^i$ be one of the non--straight Abresch--Langer curves. We follow $\sigma^i$ 
from $P$ till its other end--point $Q$. At this vertex, even if there is a core at $Q$, there is
always another different non--straight curve $\sigma^{j}$. 
We have that $\sigma^i$ and $\sigma^{j}$ are limit of sequence of curves
$\gamma^i_n$ and $\gamma^j_n$, which are either concurring in $\mathcal{N}_n$
or joint by a path of curves $\gamma^k_n,\ldots, \gamma^l_n$ collapsed to $Q$.	
We continue running along this path in $\mathcal{S}$ till
we arrive at an already considered vertex, which happens since the number of vertices of 
$\mathcal{N}_n$ is finite, obtaining a closed loop, hence, a contradiction. Thus, $\mathcal{S}$ cannot contain $3$--points or cores outside not located at the origin.
Suppose now that $\mathcal{S}$ contains a  non--straight Abresch--Langer curve
with an end-point the origin. 
Since $\mathcal{S}$ is a tree  the other end--point is non at the origin, so 
we can repeat this argument getting again a contradiction, hence, we are done with the first part of the lemma, since then $\mathcal{S}$ can only consist of halflines from the origin.

\medskip

Now we assume that $\mathcal{S}$ is a network with multiplicity one, composed of halflines from the origin.
If there is no core, $\mathcal{S}$ is homeomorphic to 
the whole sequence $\mathcal{N}_n$
 and composed only by halflines from the origin, hence $\mathcal{N}_n$ has at most one vertex, by connectedness. If $\mathcal{N}_n$ has no 
vertices, then $\mathcal{S}$ must be a line, if it has a triple junction, $\mathcal{S}$ is a standard triod.\\
If there is a core at the origin, 
the halflines of $\mathcal{S}$ can only have
six possible directions, by the $120$ degrees condition. 
Hence the sequence $\mathcal{N}_n$ is composed of trees
with at most six unbounded edges. Arguing as in the first part of the
lemma, if $N> 1$ denotes the number of triple junctions
contained in the core, it follows that $N$ can only assume the values
$2, 3, 4$. Repeating the argument of the ``longest path'', we
immediately also exclude the case $N=3$, since there would be a pair
of coincident halflines in $\mathcal{S}$, against the multiplicity--one
hypothesis, while for $N=4$ we have only two possible situations,
described at the bottom of the following figure.
\begin{figure}[H]
\begin{center}
\begin{tikzpicture}[scale=0.30]
\draw[color=black]
(-1,0)to[out= 0,in=180, looseness=1](1,0)
(-1,0)to[out= 120,in=-60, looseness=1] (-3,3.46)
(-1,0)to[out= -120,in=60, looseness=1] (-3,-3.46)
(1,0)to[out= 60,in=-120, looseness=1] (3,3.46)
(1,0)to[out=-60,in=120, looseness=1] (3,-3.46);
\draw[color=black,dashed]
(-3,3.46)to[out= 120,in=-60, looseness=1] (-4,5.19)
(-3,-3.46)to[out= -120,in=60, looseness=1] (-4,-5.19)
(3,3.46)to[out= 60,in=-120, looseness=1] (4,5.19)
(3,-3.46)to[out=-60,in=120, looseness=1] (4,-5.19);
\fill(1,0) circle (4pt);
\fill(-1,0) circle (4pt);
\draw[very thick, shift={(1,0)},color=black,scale=1,rotate=90]
(0,0)to[out= 90,in=-90, looseness=1](0,0.75)
(0,0.75)to[out= -45,in=135, looseness=1](0.15,0.6)
(0,0.75)to[out= -135,in=45, looseness=1](-0.15,0.6);
\path[font=]
(-0,-4) node[below] {$\mathcal{N}_n$};
\end{tikzpicture}\quad
\begin{tikzpicture}[scale=0.30]
\draw[color=black]
(0,0)to[out= 120,in=-60, looseness=1] (-2,3.46)
(0,0)to[out= -120,in=60, looseness=1] (-2,-3.46)
(0,0)to[out= 60,in=-120, looseness=1] (2,3.46)
(0,0)to[out=-60,in=120, looseness=1] (2,-3.46);
\draw[color=black,dashed]
(-2,3.46)to[out= 120,in=-60, looseness=1] (-3,5.19)
(-2,-3.46)to[out= -120,in=60, looseness=1] (-3,-5.19)
(2,3.46)to[out= 60,in=-120, looseness=1] (3,5.19)
(2,-3.46)to[out=-60,in=120, looseness=1] (3,-5.19);
\fill(0,0) circle (4pt);
\path[font=]
(-0,-4) node[below] {$\mathcal{S}$};
\end{tikzpicture}\quad
\begin{tikzpicture}[scale=0.30]
\draw[color=black]
(-1,0)to[out= 0,in=180, looseness=1](1,0);
\draw[very thick, shift={(1,0)},color=black,scale=1,rotate=90]
(0,0)to[out= 90,in=-90, looseness=1](0,0.75)
(0,0.75)to[out= -45,in=135, looseness=1](0.15,0.6)
(0,0.75)to[out= -135,in=45, looseness=1](-0.15,0.6);
\path[font=\footnotesize]
(-0,-4) node[below] {The core of $\mathcal{S}$};
\end{tikzpicture}\qquad
\begin{tikzpicture}[scale=0.30]
\draw[color=black]
(-1,0)to[out= 0,in=180, looseness=1](1,0)
(2,1.73)to[out= 0,in=180, looseness=1](4,1.73)
(2,-1.73)to[out= 0,in=180, looseness=1](4,-1.73)
(2,1.73)to[out= 120,in=-60, looseness=1] (1,3.46)
(2,-1.73)to[out= -120,in=60, looseness=1] (1,-3.46)
(1,0)to[out= 60,in=-120, looseness=1] (2,1.73)
(1,0)to[out=-60,in=120, looseness=1] (2,-1.73);
\draw[color=black,dashed]
(-2,0)to[out= 0,in=180, looseness=1](-1,0)
(4,1.73)to[out= 0,in=180, looseness=1](5,1.73)
(4,-1.73)to[out= 0,in=180, looseness=1](5,-1.73)
(1,3.46)to[out= 120,in=-60, looseness=1] (0,5.19)
(1,-3.46)to[out= -120,in=60, looseness=1] (0,-5.19);
\fill(1,0) circle (4pt);
\fill(2,1.73) circle (4pt);
\fill(2,-1.73) circle (4pt);
\path[font=](3,-4) node[below] {$\mathcal{N}_n$};
\draw[very thick, shift={(1,0)},scale=1,color=black,rotate=330]
(0,0)to[out= 90,in=-90, looseness=1](0,0.75)
(0,0.75)to[out= -45,in=135, looseness=1](0.15,0.6)
(0,0.75)to[out= -135,in=45, looseness=1](-0.15,0.6);
\draw[very thick, shift={(2,-1.73)},scale=1,color=black,rotate=-330]
(0,0)to[out= 90,in=-90, looseness=1](0,0.75)
(0,0.75)to[out= -45,in=135, looseness=1](0.15,0.6)
(0,0.75)to[out= -135,in=45, looseness=1](-0.15,0.6);
\end{tikzpicture}\quad
\begin{tikzpicture}[scale=0.30]
\draw[color=black]
(0,1.73)to[out= 0,in=180, looseness=1](2,1.73)
(2,1.73)to[out= 0,in=180, looseness=1](4,1.73)
(2,1.73)to[out= 120,in=-60, looseness=1] (1,3.46)
(2,1.73)to[out= -120,in=60, looseness=1] (1,0);
\draw[color=black,dashed]
(-1,1.73)to[out= 0,in=180, looseness=1](0,1.73)
(4,1.73)to[out= 0,in=180, looseness=1](5,1.73)
(1,3.46)to[out= 120,in=-60, looseness=1] (0,5.19)
(1,0)to[out= -120,in=60, looseness=1] (0,-1.73);
\fill(2,1.73) circle (4pt);
\path[font=](3,-2.27) node[below] {$\mathcal{S}$};
\end{tikzpicture}\quad
\begin{tikzpicture}[scale=0.30]
\draw[color=black]
(1,0)to[out= 60,in=-120, looseness=1] (2,1.73)
(1,0)to[out=-60,in=120, looseness=1] (2,-1.73);
\draw[very thick, shift={(1,0)},scale=1,color=black,rotate=330]
(0,0)to[out= 90,in=-90, looseness=1](0,0.75)
(0,0.75)to[out= -45,in=135, looseness=1](0.15,0.6)
(0,0.75)to[out= -135,in=45, looseness=1](-0.15,0.6);
\draw[very thick, shift={(2,-1.73)},scale=1,color=black,rotate=-330]
(0,0)to[out= 90,in=-90, looseness=1](0,0.75)
(0,0.75)to[out= -45,in=135, looseness=1](0.15,0.6)
(0,0.75)to[out= -135,in=45, looseness=1](-0.15,0.6);
\path[font=\footnotesize]
(2,-4) node[below] {The core of $\mathcal{S}$};
\end{tikzpicture}\qquad\quad
\begin{tikzpicture}[scale=0.30]
\draw[color=black]
(-1,0)to[out= 0,in=180, looseness=1](1,0)
(2,1.73)to[out= 0,in=180, looseness=1](4,1.73)
(2,-1.73)to[out= 0,in=180, looseness=1](4,-1.73)
(2,1.73)to[out= 120,in=-60, looseness=1] (1,3.46)
(2,-1.73)to[out= -120,in=60, looseness=1] (1,-3.46)
(1,0)to[out= 60,in=-120, looseness=1] (2,1.73)
(1,0)to[out=-60,in=120, looseness=1] (2,-1.73);
\draw[color=black,dashed]
(-2,0)to[out= 0,in=180, looseness=1](-1,0)
(4,1.73)to[out= 0,in=180, looseness=1](5,1.73)
(4,-1.73)to[out= 0,in=180, looseness=1](5,-1.73)
(1,3.46)to[out= 120,in=-60, looseness=1] (0,5.19);
(1,-3.46)to[out= -120,in=60, looseness=1] (0,-5.19);
\draw
(1,-3.46)--(-1,-3.46)
(1,-3.46)--(2,-5.19);
\draw[dashed]
(2,-5.19)--(3,-6.92)
(-1,-3.46)--(-2,-3.46);
\fill(1,0) circle (3pt);
\fill(2,1.73) circle (3pt);
\fill(2,-1.73) circle (3pt);
\fill(1,-3.46) circle (3pt);
\path[font=](4,-5.73) node[below] {$\mathcal{N}_n$};
\draw[very thick, shift={(1,0)},scale=1,color=black,rotate=330]
(0,0)to[out= 90,in=-90, looseness=1](0,0.75)
(0,0.75)to[out= -45,in=135, looseness=1](0.15,0.6)
(0,0.75)to[out= -135,in=45, looseness=1](-0.15,0.6);
\draw[very thick, shift={(1,-3.46)},scale=1,color=black,rotate=330]
(0,0)to[out= 90,in=-90, looseness=1](0,0.75)
(0,0.75)to[out= -45,in=135, looseness=1](0.15,0.6)
(0,0.75)to[out= -135,in=45, looseness=1](-0.15,0.6);
\draw[very thick, shift={(2,-1.73)},scale=1,color=black,rotate=-330]
(0,0)to[out= 90,in=-90, looseness=1](0,0.75)
(0,0.75)to[out= -45,in=135, looseness=1](0.15,0.6)
(0,0.75)to[out= -135,in=45, looseness=1](-0.15,0.6);
\end{tikzpicture}\quad
\begin{tikzpicture}[scale=0.30]
\draw[color=black]
(0,1.73)to[out= 0,in=180, looseness=1](2,1.73)
(2,1.73)to[out= 0,in=180, looseness=1](4,1.73)
(2,1.73)to[out= 120,in=-60, looseness=1] (1,3.46)
(2,1.73)to[out= -60,in=120, looseness=1] (3,0);
\draw[color=black,dashed]
(-1,1.73)to[out= 0,in=180, looseness=1](0,1.73)
(4,1.73)to[out= 0,in=180, looseness=1](5,1.73)
(1,3.46)to[out= 120,in=-60, looseness=1] (0,5.19)
(3,0)to[out= -60,in=120, looseness=1] (4,-1.73);
\fill(2,1.73) circle (4pt);
\path[font=](3,-2.27) node[below] {$\mathcal{S}$};
\end{tikzpicture}\quad
\begin{tikzpicture}[scale=0.30]
\draw[color=black]
(2,-1.73)to[out= -120,in=60, looseness=1] (1,-3.46)
(1,0)to[out= 60,in=-120, looseness=1] (2,1.73)
(1,0)to[out=-60,in=120, looseness=1] (2,-1.73);
\draw[very thick, shift={(1,0)},scale=1,color=black,rotate=330]
(0,0)to[out= 90,in=-90, looseness=1](0,0.75)
(0,0.75)to[out= -45,in=135, looseness=1](0.15,0.6)
(0,0.75)to[out= -135,in=45, looseness=1](-0.15,0.6);
\draw[very thick, shift={(1,-3.46)},scale=1,color=black,rotate=330]
(0,0)to[out= 90,in=-90, looseness=1](0,0.75)
(0,0.75)to[out= -45,in=135, looseness=1](0.15,0.6)
(0,0.75)to[out= -135,in=45, looseness=1](-0.15,0.6);
\draw[very thick, shift={(2,-1.73)},scale=1,color=black,rotate=-330]
(0,0)to[out= 90,in=-90, looseness=1](0,0.75)
(0,0.75)to[out= -45,in=135, looseness=1](0.15,0.6)
(0,0.75)to[out= -135,in=45, looseness=1](-0.15,0.6);
\path[font=\footnotesize]
(2,-5.73) node[below] {The core of $\mathcal{S}$};
\end{tikzpicture}\qquad
\begin{tikzpicture}[scale=0.30]
\draw[color=black]
(-1,0)to[out= 120,in=-60, looseness=1] (-2,1.73)
(-1,0)to[out= -120,in=60, looseness=1] (-2,-1.73)
(-1,0)to[out= 0,in=180, looseness=1](1,0)
(2,1.73)to[out= 0,in=180, looseness=1](4,1.73)
(2,-1.73)to[out= 0,in=180, looseness=1](4,-1.73)
(2,1.73)to[out= 120,in=-60, looseness=1] (1,3.46)
(2,-1.73)to[out= -120,in=60, looseness=1] (1,-3.46)
(1,0)to[out= 60,in=-120, looseness=1] (2,1.73)
(1,0)to[out=-60,in=120, looseness=1] (2,-1.73);
\draw[color=black,dashed]
 (-2,1.73)to[out= 120,in=-60, looseness=1](-3,3.46)
 (-2,-1.73)to[out= -120,in=60, looseness=1](-3,-3.46)
(4,1.73)to[out= 0,in=180, looseness=1](5,1.73)
(4,-1.73)to[out= 0,in=180, looseness=1](5,-1.73)
(1,3.46)to[out= 120,in=-60, looseness=1] (0,5.19)
(1,-3.46)to[out= -120,in=60, looseness=1] (0,-5.19);
\fill(-1,0)circle (4pt);
\fill(1,0) circle (4pt);
\fill(2,1.73) circle (4pt);
\fill(2,-1.73) circle (4pt);
\path[font=](3,-4) node[below] {$\mathcal{N}_n$};
\draw[very thick, shift={(1,0)},scale=1,color=black,rotate=330]
(0,0)to[out= 90,in=-90, looseness=1](0,0.75)
(0,0.75)to[out= -45,in=135, looseness=1](0.15,0.6)
(0,0.75)to[out= -135,in=45, looseness=1](-0.15,0.6);
\draw[very thick, shift={(2,-1.73)},scale=1,color=black,rotate=-330]
(0,0)to[out= 90,in=-90, looseness=1](0,0.75)
(0,0.75)to[out= -45,in=135, looseness=1](0.15,0.6)
(0,0.75)to[out= -135,in=45, looseness=1](-0.15,0.6);
\end{tikzpicture}\quad
\begin{tikzpicture}[scale=0.30]
\draw[color=black]
(2,1.73)to[out= 0,in=180, looseness=1](4,1.73)
(2,1.73)to[out= 120,in=-60, looseness=1] (1,3.46)
(2,1.73)to[out= -120,in=60, looseness=1] (1,0);
\draw[color=black,dashed]
(4,1.73)to[out= 0,in=180, looseness=1](5,1.73)
(1,3.46)to[out= 120,in=-60, looseness=1] (0,5.19)
(1,0)to[out= -120,in=60, looseness=1] (0,-1.73);
\fill(2,1.73) circle (4pt);
\path[font=](3,-2.27) node[below] {$\mathcal{S}$};
\end{tikzpicture}\quad
\begin{tikzpicture}[scale=0.30]
\draw[color=black]
(1,0)to[out= 60,in=-120, looseness=1] (2,1.73)
(1,0)to[out=-60,in=120, looseness=1] (2,-1.73);
\draw[very thick, shift={(1,0)},scale=1,color=black,rotate=330]
(0,0)to[out= 90,in=-90, looseness=1](0,0.75)
(0,0.75)to[out= -45,in=135, looseness=1](0.15,0.6)
(0,0.75)to[out= -135,in=45, looseness=1](-0.15,0.6);
\draw[very thick, shift={(2,-1.73)},scale=1,color=black,rotate=-330]
(0,0)to[out= 90,in=-90, looseness=1](0,0.75)
(0,0.75)to[out= -45,in=135, looseness=1](0.15,0.6)
(0,0.75)to[out= -135,in=45, looseness=1](-0.15,0.6);
\path[font=\footnotesize]
(2,-3) node[below] {The longest}
(2,-4) node[below] {simple path}
(2,-5) node[below] {in the core of $\mathcal{S}$};
\end{tikzpicture}\qquad
\end{center}
\begin{caption}{The possible local structure of the graphs $G$, with relative networks 
$\mathcal{S}$ and cores, for $N=2, 3, 4$.\label{fig8}}
\end{caption}
\end{figure}
\noindent Hence, if $N=4$, in both two situations above there is in $\mathcal{S}$ at least one halfline with multiplicity two, thus such case is also excluded.\\
Then, we conclude that the only possible network with a core is when $N=2$ and $\mathcal{S}$ is given by two lines intersecting at the origin forming angles of $120/60$ degrees and the core consists of a collapsed segment.
\end{proof}

\begin{lem}\label{possibiliblowup}
Let $\mathcal{N}_0$ be a regular tree--like network and
let $\mathcal{N}_t$ be the maximal solution of the motion by curvature 
with fixed end--points in a smooth,  convex, bounded  
and open set $\Omega\subseteq\R^2$
in the maximal time interval $[0,T)$
with initial datum $\mathcal{N}_0$. 
Let $\mathcal{N}^{\lambda_i}_\tau$ be a sequence of parabolically rescaled curvature flows
around $(p_0,T)$ (see Definition~\ref{pararesc})
and suppose that $\textbf{M1}$ holds.
Then if $p_0\in\Omega\cap\mathcal{R}$,
$\mathcal{N}^{\lambda_i}_\tau$ converges (up to subsequences) to 
\begin{itemize}
\item a static flow of a straight line through the origin;
\item a static flow of a standard triod centered at the origin;
\item a static flow of a standard cross centered at the origin.
\end{itemize}
If $p_0\in\partial\Omega\cap\mathcal{R}$, then
$\mathcal{N}^{\lambda_i}_\tau$ converges (up to subsequences) to 
\begin{itemize}
\item a static flow of a halfline from the origin;
\item a static flow of two concurring halflines forming an angle of $120$ degrees
at the origin.
\end{itemize}
\end{lem}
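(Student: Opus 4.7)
The plan is to exploit the convergence provided by Proposition~\ref{convergenza-a-shrinkers} together with the classification in Lemma~\ref{lemmatree}. Along the chosen subsequence, $\mathcal{N}^{\lambda_i}_\tau$ converges in $C^{1,\alpha}\loc\cap W^{2,2}\loc$ for a.e. $\tau$ and in the sense of measures for every $\tau\in(-\infty,0)$ to a self--similarly shrinking network flow $\mathcal{N}^\infty_\tau$, and by the assumed Multiplicity--One Conjecture the limit has unit multiplicity. It then suffices to identify $\mathcal{N}^\infty_\tau$ geometrically in each of the two cases and to verify that its shrinker profile consists only of halflines from the origin, so that $\langle\eta,\nu\rangle\equiv0$, $\kappa\equiv0$, equation~\eqref{shrinkereq} holds trivially and $\lambda(t)\eta=\eta$ as a set, making the flow static.

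For $p_0\in\Omega\cap\mathcal{R}$, I would check that the hypotheses of Lemma~\ref{lemmatree} are met at any fixed $\tau<0$ by the sequence $\mathcal{N}^{\lambda_i}_\tau$. Since $p_0\in\Omega$ is an interior point and the end-points $P^r$ all lie on $\partial\Omega$, the rescaled vertices $\lambda_i(P^r-p_0)$ escape to infinity and $\lambda_i(\Omega-p_0)$ eventually contains every bounded subset of $\R^2$; hence on any fixed ball the networks $\mathcal{N}^{\lambda_i}_\tau$ are regular, tree--like, connected, with no external vertices and with branches becoming non-compact in the limit. Lemma~\ref{lemmatree} then yields that $\mathcal{N}^\infty_\tau$ is a line, a standard triod, or a standard cross through the origin, and the preceding observation gives at once that the associated flow is static, proving the first half of the statement.

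For $p_0\in\partial\Omega\cap\mathcal{R}$, the smoothness and convexity of $\Omega$ imply that $\lambda_i(\Omega-p_0)$ converges locally to the halfplane $H$ with inner normal equal to the inward unit normal of $\partial\Omega$ at $p_0$; external vertices $P^r\neq p_0$ are still pushed to infinity, while a possible external vertex $P^r=p_0$ stays at the origin. My strategy is to reflect $\mathcal{N}^{\lambda_i}_\tau$ across $\lambda_i(\partial\Omega-p_0)$: for $i$ large this reflection produces a sequence of regular, tree--like networks in $\R^2$ with non--compact branches, no external vertices in any bounded region, and retaining unit multiplicity in the limit. Applying Lemma~\ref{lemmatree} to the doubled limit, it must be a line, a standard triod, or a standard cross through the origin. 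Restricting back to $H$ and discarding the configurations forbidden by the one--sidedness $\mathcal{N}^{\lambda_i}_\tau\subset\lambda_i(\overline{\Omega}-p_0)$ and by the $120$ degree angle condition at triple junctions, only two cases survive: a single halfline from the origin into $H$ (coming from a line perpendicular to $\partial H$ or a triod with one halfline along the inward normal), or two halflines from the origin meeting at $120$ degrees (coming from a standard cross whose two lines form angles of $\pm30$ degrees with $\partial H$); both give static flows, yielding the second half of the statement.

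The main obstacle I expect is the boundary case: the reflection at $p_0$ must be performed with some care, especially when $p_0=P^r$, since the doubling creates a priori a $2$--valent vertex at the origin and the reflected network need not be regular there. This should be handled by showing that such a $2$--valent vertex either disappears because the adjacent curve collapses, or, in the non--vanishing case, that the shrinker equation together with the $W^{2,2}\loc$--convergence forces the incoming curve to arrive at $\partial H$ orthogonally (by an Abresch--Langer type analysis of halfline shrinkers), so that the doubling is actually smooth through the origin and Lemma~\ref{lemmatree} can be legitimately applied. The subsequent case analysis on restrictions to $H$ of line, triod, and cross is a finite check, but one needs to exclude degenerate cases with a halfline lying along $\partial H$, using that $\mathcal{N}_t\subset\overline\Omega$ strictly and the regularity of the doubled limit.
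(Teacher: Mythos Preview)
Your proposal is correct and follows essentially the same route as the paper: apply Proposition~\ref{convergenza-a-shrinkers} together with \textbf{M1} to obtain a multiplicity--one self--similarly shrinking limit, identify the shrinker profile via Lemma~\ref{lemmatree} when $p_0\in\Omega$, and treat $p_0\in\partial\Omega$ by a reflection argument reducing to the interior case. The paper's proof is terser---it simply cites~\cite{ManNovPluSchu} (just before Theorem~14.4) for the reflection step---whereas you spell out the verification that the rescaled networks satisfy the hypotheses of Lemma~\ref{lemmatree} and sketch the case analysis on the half--plane; your discussion of the $2$--valent vertex created by doubling at an end--point and of the exclusion of halflines along $\partial H$ is exactly the content hidden behind that citation.
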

\begin{proof}
By hypothesis we 
consider a sequence of parabolically rescaled curvature flows $\mathcal{N}^{\lambda_i}_\tau$ around $(p_0,T)$. 
Then, by Proposition~\ref{convergenza-a-shrinkers},
as $i\to\infty$, it converges to a 
self--similarly shrinking network flow $\mathcal{N}^\infty_\tau$, in $C^{1,\alpha}\loc \cap W^{2,2}\loc$, for almost all $\tau\in (-\infty, 0)$ and for any $\alpha \in (0,1/2)$.
Assuming that the conjecture $\textbf{M1}$ holds true, then $\mathcal{N}^\infty_\tau$ has unit multiplicity.

There is a one--to--one correspondence between shrinkers and self--similar shrinking flow:
if $\mathcal{S}=\{\eta^i\}_{i=1}^N$ is a shrinker, 
then the evolution given by  $\mathcal{N}_t=\{\gamma^i\}_{i=1}^N$ with
$\gamma^i(t,x)=\sqrt{-2t}\eta^i(x)$
is a self--similarly shrinking  flow in the time interval 
$(-\infty,0)$ with $\mathcal{N}_{-1/2}=\mathcal{S}$ and the other way around.
Thus to classify the tangent flows it is enough to look at the limit shrinkers.
By Lemma~\ref{lemmatree},
if we suppose that $p_0\not\in\partial\Omega$, then $\mathcal{N}^\infty_\tau$ can only be the ``static'' flow given by:
\begin{itemize}
\item a straight line;
\item a standard triod;
\item a standard cross.
\end{itemize}
If instead $p_0\in\partial\Omega$, 
by a reflection argument as before (see~\cite{ManNovPluSchu}, just before Theorem~14.4), we reduce ourselves again to the case $p_0\in\Omega$.
As a result 
the only two possibilities for $\mathcal{N}^\infty_\tau$ are the static flows given by:
\begin{itemize}
\item a halfline;
\item two concurring halflines forming an angle of $120$ degrees.
\end{itemize}
We thus have listed all possible cases.
\end{proof}

If we add an extra assumption of the lengths of the curves of the evolving network,
two of the previous cases are excluded. To be precise we have:

\begin{cor}\label{enanrem}
Let $\mathcal{N}_0$ be a regular tree--like network and
let $\mathcal{N}_t$ be the maximal solution of the motion by curvature 
with fixed end--points in a smooth, strictly  convex, bounded  
and open set $\Omega\subseteq\R^2$
in the maximal time interval $[0,T)$
with initial datum $\mathcal{N}_0$. 
Assume that the lengths $L^i(t)$ of all the curves of the networks satisfy
\begin{equation}\label{Lbasso}
\lim_{t\to T}\frac{L^i(t)}{\sqrt{T-t}}=+\infty\,.
\end{equation}
Let $\mathcal{N}^{\lambda_i}_\tau$ be a sequence of parabolically rescaled curvature flows
around $(p_0,T)$ (see Definition~\ref{pararesc}) and suppose that $\textbf{M1}$ holds.
Then if the rescaling point belongs to $\Omega\cap\mathcal{R}$, 
$\mathcal{N}^{\lambda_i}_\tau$ converges (up to subsequences)
 to:
\begin{itemize}
\item a static flow of a straight line through the origin (in this case $\widehat\Theta(p_0)=1$);
\item a static flow of a standard triod centered at the origin (in this case $\widehat\Theta(p_0)=3/2$).
\end{itemize}
If the rescaling point $p_0$ is a fixed end--point of the evolving network
(on the  boundary of $\Omega$), then 
$\mathcal{N}^{\lambda_i}_\tau$ converges (up to subsequences)
 to:
\begin{itemize}
\item a static flow of a halfline from the origin(in this case $\widehat\Theta(p_0)=1/2$).
\end{itemize}
If  the rescaling point does not belongs to $\mathcal{R}$, then the limit is the empty set
and $\widehat{\Theta}(p_0)=0$.
\end{cor}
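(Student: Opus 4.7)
The plan is to start from the list of tangent flows given by Lemma~\ref{possibiliblowup} and use the quantitative lower bound~\eqref{Lbasso} to discard exactly those limits whose derivation in the proof of Lemma~\ref{lemmatree} required a positive--length curve in the prelimit to collapse into the origin, i.e.\ to form a \emph{core}.

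The key quantitative step is the following observation on the parabolic rescaling around $(p_0,T)$. For any fixed rescaled time $\tau<0$, the length of the $i$--th curve of $\mathcal{N}^{\lambda_i}_\tau$ equals $\lambda_i\,L^i(t_i)$ where $t_i=T+\lambda_i^{-2}\tau$. Since $\sqrt{T-t_i}=\sqrt{|\tau|}/\lambda_i$, we get
\[
\lambda_i\,L^i(t_i)\;=\;\sqrt{|\tau|}\,\frac{L^i(t_i)}{\sqrt{T-t_i}}\;\longrightarrow\;+\infty\qquad\text{as }\lambda_i\to+\infty,
\]
by~\eqref{Lbasso}. Consequently no single curve of $\mathcal{N}^{\lambda_i}_\tau$ can collapse in the $C^1\loc\cap W^{2,2}\loc$ limit, so the limit shrinker $\mathcal{S}$ of Lemma~\ref{possibiliblowup} has no core in the sense of the proof of Lemma~\ref{lemmatree}.

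Inspecting that proof (in particular the case analysis in Figure~\ref{fig8}), among the multiplicity--one shrinkers made of halflines from the origin the standard cross is realized only with $N=2$ triple junctions collapsed into a core. Thus the cross is excluded when $p_0\in\Omega\cap\mathcal{R}$, leaving only the line and the standard triod. At a boundary rescaling point, the reflection device used in the boundary portion of Lemma~\ref{possibiliblowup} turns the two halflines meeting at $120$ degrees into a standard cross in the doubled picture, so this case is likewise ruled out and only the halfline survives. The stated Gaussian densities then follow from the one--line computations $\int_{-\infty}^{+\infty}\frac{e^{-s^2/4}}{\sqrt{4\pi}}\,\mathrm{d}s=1$ and $\int_{0}^{+\infty}\frac{e^{-s^2/4}}{\sqrt{4\pi}}\,\mathrm{d}s=1/2$, yielding $\widehat\Theta(p_0)=1,\,3/2,\,1/2$ for a line, a triod and a halfline respectively.

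Finally, when $p_0\notin\mathcal{R}$, the characterization of reachable points recalled just before Definition~\ref{pararesc} provides some $t^*<T$ with $B_{\sqrt{2(T-t)}}(p_0)\cap\mathcal{N}_t=\emptyset$ for all $t\in[t^*,T)$, and parabolic rescaling shows that $\mathcal{N}^{\lambda_i}_\tau$ eventually leaves every fixed bounded set, so the limit is the empty flow and $\widehat\Theta(p_0)=0$. The only subtlety I expect is phrasing the ``no core'' conclusion uniformly along the subsequence extracted in Proposition~\ref{convergenza-a-shrinkers}; this is ultimately immediate from~\eqref{Lbasso} because $\lambda_i L^i(t_i)\to+\infty$ holds on every compact set in the variable $\tau$, but it is the step that has to be written out with care.
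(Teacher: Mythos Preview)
Your proposal is correct and follows the same line as the paper's own proof, which is the terse sentence ``the bound from below on the lengths excludes the possibility that the shrinker has a core''; you have simply made this step explicit with the computation $\lambda_i L^i(t_i)=\sqrt{|\tau|}\,L^i(t_i)/\sqrt{T-t_i}\to+\infty$, and filled in the density values and the $p_0\notin\mathcal{R}$ case that the paper leaves implicit.
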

\begin{proof}
The possible limit flow is among the ones listed in the previous lemma.
Moreover the bound from below on the lengths
exclude the possibility that the shrinker has a core.
Clearly the only remaining options are the line, the standard triod and the halfline.
\end{proof}

\subsection{Local regularity}

\begin{defn}\label{ublr}
We say that a network
$\mathcal{N}$ has  bounded length ratios by the constant $C>0$, if 
\begin{equation}\label{blr}
\mathcal{H}^1(\mathcal{N}\cap B_R(\overline{p}))\leq C R\,, 
\end{equation}
for every $\overline{p}\in\R^2$ and $R>0$.
\end{defn}

\begin{thm}\label{thm:locreg.2}
Let $\mathcal{N}_t$ for ${t\in (T_0,T)}$ be a curvature flow of a smooth, regular 
network
with uniformly bounded length ratios by some constant $L$. Let $(p_0,t_0)\in\R^2\times(T_0,T)$ such that $p_0\in\mathcal{N}_{t_0}$.
Let $\varepsilon, \eta>0$. If
\begin{equation}\label{eq:locreg.0.5}
{\Theta}_{p,t}(t-r^2) \leq \Theta_{\SS^1} -\varepsilon\,,
\end{equation}
for all $(p,t) \in B_\rho(p_0)\times (t_0-\rho^2,t_0)$ and
$0<r<\eta\rho$, for some $\rho >0$, where $T_0+(1+\eta)\rho^2\leq
t_0<T$, then there exists $K=K(\varepsilon,\eta,C)$ such that
$$
k^2(p,t) \leq \frac{K}{\sigma^2\rho^2}\,,
$$
for all $\sigma \in (0,1)$ and every $(p,t)$ with  $p\in\mathcal{N}_t\cap B_{(1-\sigma)\rho}(p_0)$
and $t\in(t_0-(1-\sigma)^2\rho^2,t_0)$. 
\end{thm}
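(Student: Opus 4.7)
I would follow the standard White/Brakke local regularity blow-up scheme, adapted to the network setting as in~\cite{IlNevSchu,Wh}. Suppose the conclusion fails: then for every $K_j\to+\infty$ one can find $\sigma_j\in(0,1)$ and a spacetime point $(q_j,s_j)$ with $q_j\in\mathcal{N}_{s_j}\cap B_{(1-\sigma_j)\rho}(p_0)$ and $s_j\in(t_0-(1-\sigma_j)^2\rho^2,t_0)$ such that
\[
\sigma_j^2\rho^2\, k^2(q_j,s_j)\ge K_j\longrightarrow+\infty\,.
\]
A parabolic Brakke/Ilmanen--type point--picking lemma, applied inside $B_\rho(p_0)\times(t_0-\rho^2,t_0)$, then produces a nearby spacetime point $(\widetilde q_j,\widetilde s_j)$ with $\lambda_j:=k(\widetilde q_j,\widetilde s_j)\to+\infty$, with $k\le 2\lambda_j$ on a backward parabolic neighborhood of radius comparable to $1/\lambda_j$, and with $\lambda_j\sigma_j\rho\to+\infty$.

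Parabolically rescaling by $\lambda_j$ about $(\widetilde q_j,\widetilde s_j)$ in the sense of Definition~\ref{pararesc}, the rescaled flows $\widetilde{\mathcal{N}}^{\lambda_j}_\tau$ satisfy $k(0,0)=1$, a uniform curvature bound on compact subsets of $\R^2\times(-\infty,0]$, the scale--invariant length--ratio bound~\eqref{blr}, and the scale--invariant pointwise density bound $\Theta_{p,t}(t-r^2)\le\Theta_{\SS^1}-\varepsilon$ on parabolic cylinders exhausting $\R^2\times(-\infty,0]$ as $j\to\infty$. Bootstrapping the quasilinear system~\eqref{motionsystem} from the uniform curvature bound gives $C^\infty\loc$ estimates away from junctions, so I can extract a subsequential $C^\infty\loc$ limit $\mathcal{N}^\infty_\tau$ which is an eternal curvature flow of a regular network, has $k(0,0)=1$, and satisfies the same upper density bound pointwise. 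Huisken's monotonicity formula on this eternal limit (cf.~Proposition~\ref{convergenza-a-shrinkers}), combined with the upper density bound, forces $\Theta_{p,t}(t-r^2)$ to be constant in $r$, so $\mathcal{N}^\infty_\tau$ is a self--similarly shrinking network flow. By Lemma~\ref{lemmatree} together with the Abresch--Langer classification~\cite{AbLa} (any closed self-shrinker has Gaussian density at least $\Theta_{\SS^1}$), the only admissible limits with density strictly below $\Theta_{\SS^1}$ are a multiplicity--one line, a standard triod, or a standard cross, all of which are static with identically vanishing curvature on their smooth strata. This contradicts $k(0,0)=1$, giving the desired $K$.

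The main obstacle lies in the rescaling and compactness step: the blow-up point $\widetilde q_j$ may sit at distance comparable to $1/\lambda_j$ from a triple junction of $\mathcal{N}_{\widetilde s_j}$, so the rescaled junctions need not escape to infinity. One has to verify that the parabolic regularity for~\eqref{motionsystem} survives across junctions under rescaling, and that the subsequential limit is still a curvature flow of a regular network (possibly with triple junctions) rather than a smooth single curve, with the $120$ degree condition preserved in the limit. This is precisely the kind of rescaling analysis carried out in~\cite{ManNovPluSchu,IlNevSchu}: the bounded length--ratio hypothesis~\eqref{blr} provides the compactness of the rescaled junctions themselves, the boundary conditions at each junction are preserved under parabolic rescaling, and then the monotonicity--plus--classification argument above closes the loop.
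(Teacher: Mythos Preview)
The paper does not give its own proof of this statement: it simply cites~\cite[Theorem~1.3]{IlNevSchu}. Your blow--up/point--picking scheme is indeed the strategy used there (and in White's original argument), so at the level of architecture you are reconstructing the cited proof rather than proposing an alternative.

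That said, two steps in your write--up do not go through as stated. First, the sentence ``Huisken's monotonicity formula on this eternal limit, combined with the upper density bound, forces $\Theta_{p,t}(t-r^2)$ to be constant in $r$'' is not correct: monotonicity gives that $r\mapsto\Theta_{0,0}(-r^2)$ is nondecreasing, and the hypothesis gives only an \emph{upper} bound $\Theta_{\SS^1}-\varepsilon$, so there is no squeeze forcing constancy and hence no direct conclusion that the limit is self--similar. What the argument in~\cite{IlNevSchu} actually uses is a rigidity/Liouville step: one takes a further blow--down of the ancient limit as $\tau\to-\infty$ to obtain a genuine self--shrinker with Gaussian density $\le\Theta_{\SS^1}-\varepsilon$, classifies it (line or standard triod, both with $\widehat\Theta\le 3/2$), and then uses equality in the monotonicity formula to propagate this forward and conclude the whole ancient limit is static. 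You need to insert this extra blow--down--plus--equality step.

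Second, you invoke Lemma~\ref{lemmatree} for the classification, but that lemma assumes the approximating networks are \emph{trees}, which is not a hypothesis of Theorem~\ref{thm:locreg.2}. The classification you need here is the general one for regular one--dimensional self--shrinkers with Gaussian density strictly below $\Theta_{\SS^1}$: by Abresch--Langer~\cite{AbLa} any closed branch contributes density at least $\Theta_{\SS^1}$, so only halflines from the origin survive, and under the regularity (120--degree) condition with density $<\Theta_{\SS^1}$ the only possibilities are a multiplicity--one line or a standard triod. (Note that the standard cross has density $2>\Theta_{\SS^1}$, so it should not appear in your list; this is harmless for the contradiction but indicates the classification step was not carried out carefully.)
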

\begin{proof}
See~\cite[Theorem 1.3]{IlNevSchu}.
\end{proof}

The following corollary is then an extension of White's result~\cite[Theorem~3.5]{Wh} to the curvature flow $\mathcal{N}_t$ of a network in $\Omega\subseteq\R^2$ with fixed end--points on $\partial\Omega$.

\begin{cor}\label{regcol}
Let $\mathcal{N}_t$ be a maximal solution to the motion by curvature in the maximal time 
interval $[0,T)$
in $\overline{\Omega}$, possibly with end--points $P^1,\ldots,P^\ell	\in\partial\Omega$.
Suppose that
\begin{itemize}
\item either at $p_0\in\Omega$ there holds $\widehat{\Theta}(p_0,T)\leq 3/2$;
\item or at $p_0\in\partial\Omega$ there holds $\widehat{\Theta}(p_0,T)\leq 1/2$.
\end{itemize}
Then the curvature is uniformly bounded along the flow  in a neighborhood of $p_0$, for $t\in[0,T)$.
\end{cor}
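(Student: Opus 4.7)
The plan is to apply the local regularity Theorem~\ref{thm:locreg.2} at the space-time point $(p_0,T)$. Its hypothesis requires a uniform gap $\Theta_{p,t}(t-r^2)\leq\Theta_{\SS^1}-\varepsilon$ in a small parabolic cylinder around $(p_0,T)$, together with bounded length ratios in the sense of Definition~\ref{ublr}. The value $\Theta_{\SS^1}=\sqrt{2\pi/e}\approx 1.52$ is strictly larger than $3/2$, so in both cases of the corollary the density hypotheses leave room to produce such a gap once combined with upper semicontinuity of $\widehat{\Theta}$ and the near-monotonicity of $\Theta_{p,t}(\cdot)$.

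For the interior case $p_0\in\Omega$, the main step is twofold. First, by upper semicontinuity of the limit Gaussian density (a standard consequence of Huisken's monotonicity for networks, cf.\ Section~2.4 and~\cite[Section~7]{ManNovPluSchu}), there exists $\rho>0$ such that $\widehat{\Theta}(p,t)\leq 3/2+\delta$ for all $(p,t)\in B_\rho(p_0)\times (T-\rho^2,T)$. Second, a fresh application of the monotonicity formula centred at $(p,t)$ gives $\Theta_{p,t}(t-r^2)\leq \widehat{\Theta}(p,t)+\omega(r)$ with $\omega(r)\to 0$ as $r\to 0$, uniformly in $(p,t)$: the boundary correction terms from the fixed end-points $P^r$ vanish in this limit, since the $P^r$ stay at positive distance from $p_0$ and the heat kernel based at $(p,t)$ decays exponentially there. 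Choosing $\delta$ and $\eta>0$ small enough produces a uniform gap $\Theta_{p,t}(t-r^2)\leq\Theta_{\SS^1}-\varepsilon$ for all $(p,t)$ in $B_\rho(p_0)\times(T-\rho^2,T)$ and $r\in(0,\eta\rho)$. Bounded length ratios follow from the same monotonicity formula, so all the hypotheses of Theorem~\ref{thm:locreg.2} are met: it yields $k^2\leq K/\rho^2$ in a smaller neighbourhood of $p_0$ for $t$ close to $T$, and combining with smoothness of the flow on $[0,T-\delta_0]$ gives the uniform bound on $[0,T)$.

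The boundary case $p_0\in\partial\Omega$ with $\widehat{\Theta}(p_0,T)\leq 1/2$ reduces to the interior one by the local reflection trick across the tangent line to $\partial\Omega$ at $p_0$, as in~\cite[just before Theorem~14.4]{ManNovPluSchu}. The reflected configuration is still a curvature flow (being obtained from the original by an isometry of the plane), and the Gaussian density at $(p_0,T)$ doubles under the reflection, so the assumption $\widehat{\Theta}(p_0,T)\leq 1/2$ becomes $\widehat{\Theta}(p_0,T)\leq 1<3/2$ for the doubled flow. The interior argument above then delivers a curvature bound near $p_0$ for the doubled flow, and hence for the original one.

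The main obstacle is the uniform control of the boundary correction terms of Huisken's formula in the monotonicity-plus-upper-semicontinuity step, especially as $(p,t)$ approaches $\partial\Omega$, together with the correct implementation of the reflection making the doubled flow admissible to Theorem~\ref{thm:locreg.2}. Both ingredients are already carefully carried out in~\cite[Sections~7--14]{ManNovPluSchu} for precisely this flow, so in practice the proof amounts to assembling those monotonicity and reflection lemmas and feeding the resulting density gap into Theorem~\ref{thm:locreg.2}.
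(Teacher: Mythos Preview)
Your proposal is correct and follows essentially the same route as the paper: bounded length ratios, the gap $\Theta_{\SS^1}>3/2$, a continuity/monotonicity argument to propagate the density bound to a parabolic neighborhood, and then Theorem~\ref{thm:locreg.2}; the boundary case is handled by the same reflection you invoke. The only minor packaging difference is that the paper fixes a single scale $\rho_1$ with $\Theta_{p_0,T}(T-\rho_1^2)<3/2+\delta/2$ and then uses continuity of $(p,t)\mapsto\Theta_{p,t}(t-\rho_1^2)$ at that fixed scale before applying monotonicity, which sidesteps the need to verify your uniform $\omega(r)\to0$ statement directly.
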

\begin{proof}
It is well known, that in these hypetheses, the family of networks $\mathcal{N}_t$ has uniformly bounded length ratios (this is due to Stone~\cite{stone1} -- see also~\cite[Lemma~8.15]{ManNovPluSchu}). Then, as $\widehat{\Theta}(p_0)=\widehat{\Theta}(p_0,T)\leq3/2$, by the monotonicity and the properties of $\Theta_{p_0,T}(t)$ (see~\cite[Section 7]{ManNovPluSchu} for details), there exists $\rho_1\in(0,1)$ such that $\Theta_{p_0,T}(T-\rho_1^2)<3/2+\delta/2$, for some small $\delta>0$. The function $(p,t)\mapsto\Theta_{p,t}(t-\rho_1^2)$ is continuous, hence, we can find $\rho<\rho_1$ such that if $(p,t)\in B_\rho(p_0)\times(T-\rho^2,T)$, then $\Theta_{p,t}(t-\rho^2_1)<3/2+\delta$, thus, again by the monotonicity (possibly choosing smaller $\rho_1$ and $\rho$), also $\Theta_{p,t}(t-r^2)<3/2+\delta$, for any $r\in(0,\rho/2)$, as clearly $(t-r^2)>( t-\rho_1^2)$. 

This implies that if $3/2+\delta<\Theta_{\SS^1}=\sqrt{2\pi/e}$, for any $t_0$ close enough to $T$, the hypotheses of Theorem~\ref{thm:locreg.2} (see the first point of Remark~9.4 in~\cite{ManNovPluSchu}) are satisfied at $(p_0,t_0)$, for $\eta=3/4$ and $\varepsilon=\Theta_{\SS^1}-3/2-\delta>0$. Choosing $\sigma=1/2$, we conclude that 
$$
k^2(p,t) \leq \frac{4C(\varepsilon,3/4)}{\rho^2}
$$
for every $(p,t)$ such that $t\in(t_0-\rho^2/4,t_0)$ and $p\in\mathcal{N}_t\cap B_{\rho/2}(p_0)$. Since this estimate on the curvature is independent of $t_0<T$, it must hold for every $t\in(T-\rho^2/4,T)$ and $p\in\mathcal{N}_t\cap B_{\rho/2}(p_0)$ and we are done.
\end{proof}

\section{Estimates on geometric quantities}

We list here several relations fulfilled by geometric quantities at  
the triple junctions of the solutions of smooth flow of regular networks.
However before stating the lemma we list the evolution equation of several quantities
used in the sequel.

If $\gamma$ is a curve moving by
$\gamma_t=k\nu+\zeta\tau\,,
$
then the following commutation rule holds
\begin{equation}\label{commut}
\dert\ders=\ders\dert +
(k^2 -\zeta_s)\ders\,.
\end{equation}
To show it we let
 $f:[0,1]\times[0,T)\to\R$ be a smooth function, then
\begin{align*}
\dert\ders f - \ders\dert f =&\, \frac{f_{tx}}{\vert\gamma_x\vert} -
\frac{\langle \gamma_x\,\vert\,\gamma_{xt}\rangle f_x}
{\vert\gamma_x\vert^3}
- \frac{f_{tx}}{\vert\gamma_x\vert} = - {\langle
  \tau\,\vert\,\partial_s\gamma_t\rangle}\partial_sf\\
=&\, - {\langle\tau\,\vert\,\partial_s(\zeta\tau+k\nu)\rangle}\partial_sf=
 (k^2 - \zeta_s)\ders f\,.
\end{align*}
Then, thanks to the commutation rule for an evolving curve we can compute
\begin{align}
\dert\mathrm{d}s=&\left(\partial_s\zeta-k^2\right)\mathrm{d}s\nonumber\\
\dert\tau=&\,
\dert\ders\gamma=\ders\dert\gamma+(k^2-\zeta_s)\ders\gamma =
\ders(\zeta\tau+k\nu)+(k^2-\zeta_s)\tau =
(k_s+k\zeta)\nu\label{derttau}\,,\\
\dert\nu=&\, \dert({\mathrm R}\tau)={\mathrm
R}\,\dert\tau=-(k_s+k\zeta)\tau\label{dertdinu}\,,\\
\dert k=&\, \dert\langle \ders\tau\,|\, \nu\rangle=
\langle\dert\ders\tau\,|\, \nu\rangle\label{dertdik}
= \langle\ders\dert\tau\,|\, \nu\rangle +
(k^2-\zeta_s)\langle\ders\tau\,|\, \nu\rangle\\
=&\, \ders\langle\dert\tau\,|\, \nu\rangle + k^3-k\zeta_s =
\ders(k_s+k\zeta) + k^3-k\zeta_s\nonumber\\
=&\, k_{ss}+k_s\zeta + k^3\nonumber\,,
\end{align}
where $\mathrm{R}$ is the anticlockwise rotation of $\pi/2$.

\begin{lem}\label{condizioni-giunzione}
Let $\mathcal{N}_t$ be a smooth flow of regular networks in $[0,T)$ and 
let $\gamma^1$, $\gamma^2$ and $\gamma^3$ be tree curves concurring at a
triple junction $\mathcal{O}$.
Then for every $t\in [0,T)$ at each triple junction $\mathcal{O}$ the following relations
hold true:
\begin{align}
\sqrt{3}\zeta^{i}&=\kappa^{i-1}-\kappa^{i+1}\,,\label{zetafunzdik}\\
\sum_{i=1}^3 \kappa^{i}=\sum_{i=1}^3 \zeta^{i}&=0\,,\label{sommakzero}\\
\partial_s\kappa^i + \zeta^i\kappa^i&=\partial_s\kappa^j + \zeta^j\kappa^j\,,\label{derivatatau}\\
\sum_{i=1}^3 \kappa^{i}\partial_s\kappa^{i}+\zeta^{i}|\kappa^{i}|^2&=0\,,\label{allagiunzione}\\
3\sum_{i=1}^3\zeta^i \kappa^i \partial_t\kappa^i &= \partial_t \sum_{i=1}^3\zeta^i \left(\kappa^i\right) ^2\,.\label{lowerboundaryterm}
\end{align}
with the convention that the superscripts appearing in sums are to be 
considered ``modulus $3$''.
\end{lem}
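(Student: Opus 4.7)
The strategy is that at a triple junction $\mathcal{O}$ the three curves share the same space--time point, so the time derivatives $\partial_t\gamma^i$ evaluated at $\mathcal{O}$ all coincide with one common vector $V\in\R^2$, namely the velocity of the moving junction. Writing the evolution equation componentwise, $V=\kappa^i\nu^i+\zeta^i\tau^i$ for each $i=1,2,3$, and taking scalar products with the orthonormal frame $(\tau^i,\nu^i)$ gives the key pointwise identifications
\begin{equation*}
\kappa^i=\langle V,\nu^i\rangle,\qquad \zeta^i=\langle V,\tau^i\rangle.
\end{equation*}
From the $120$ degrees condition $\sum_i\tau^i=0$, applying the rotation $\mathrm{R}$ also yields $\sum_i\nu^i=0$, so summing the identifications above proves \eqref{sommakzero} at once. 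For \eqref{zetafunzdik}, a short trigonometric computation using $\tau^{i\pm1}=R_{\pm 2\pi/3}\tau^i$ gives $\nu^{i-1}-\nu^{i+1}=\sqrt3\,\tau^i$, hence $\kappa^{i-1}-\kappa^{i+1}=\langle V,\nu^{i-1}-\nu^{i+1}\rangle=\sqrt3\,\zeta^i$.

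For \eqref{derivatatau}, I would parametrize so that the junction corresponds to a fixed parameter $x=0$ on each curve. Then the boundary condition $\sum_i\tau^i(0,t)=0$ holds identically in $t\in[0,T)$ and can be differentiated in time. Using equation \eqref{derttau}, namely $\partial_t\tau^i=(\partial_s\kappa^i+\kappa^i\zeta^i)\nu^i$, this gives
\begin{equation*}
\sum_{i=1}^3(\partial_s\kappa^i+\kappa^i\zeta^i)\,\nu^i=0\quad\text{at }\mathcal O.
\end{equation*}
Since any two of $\nu^1,\nu^2,\nu^3$ are linearly independent and $\nu^3=-\nu^1-\nu^2$, substituting and reading off the coefficients in the basis $(\nu^1,\nu^2)$ forces the quantity $\partial_s\kappa^i+\zeta^i\kappa^i$ to be the same for $i=1,2,3$, which is exactly \eqref{derivatatau}. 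Denoting this common value by $W$, the identity \eqref{allagiunzione} then follows immediately: multiply \eqref{derivatatau} by $\kappa^i$, sum over $i$, and use \eqref{sommakzero} to get $\sum_i\kappa^i(\partial_s\kappa^i+\zeta^i\kappa^i)=W\sum_i\kappa^i=0$.

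Finally, \eqref{lowerboundaryterm} is a consequence of \eqref{zetafunzdik} and \eqref{sommakzero} together with a cyclic reindexing. Substituting $\sqrt3\,\zeta^i=\kappa^{i-1}-\kappa^{i+1}$ and then using $\kappa^i=-\kappa^{i-1}-\kappa^{i+1}$ yields the convenient algebraic identity $\sqrt3\,\zeta^i\kappa^i=(\kappa^{i+1})^2-(\kappa^{i-1})^2$. Consequently
\begin{equation*}
3\sum_{i=1}^3\zeta^i\kappa^i\,\partial_t\kappa^i=\sqrt3\sum_{i=1}^3\bigl[(\kappa^{i+1})^2-(\kappa^{i-1})^2\bigr]\partial_t\kappa^i.
\end{equation*}
On the other hand, expanding the right--hand side of \eqref{lowerboundaryterm} gives $\sum_i(\kappa^i)^2\partial_t\zeta^i+2\sum_i\zeta^i\kappa^i\partial_t\kappa^i$, where the first term is rewritten via $\sqrt3\,\partial_t\zeta^i=\partial_t\kappa^{i-1}-\partial_t\kappa^{i+1}$ and a cyclic shift of summation index to match $\sum_i[(\kappa^{i+1})^2-(\kappa^{i-1})^2]\partial_t\kappa^i/\sqrt3$, while the second is already $2/\sqrt3$ times the same sum by the identity just derived. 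Adding $1+2=3$ copies produces the same $\sqrt3\sum_i[(\kappa^{i+1})^2-(\kappa^{i-1})^2]\partial_t\kappa^i$, proving the claim.

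The main obstacle is purely bookkeeping in the last identity \eqref{lowerboundaryterm}, where one must carefully perform the cyclic reindexing $i\mapsto i\pm1$ so that the three resulting sums align; all other relations reduce quickly to the single observation that the junction has a well--defined velocity $V$ whose two components in the frame $(\tau^i,\nu^i)$ are precisely $\zeta^i$ and $\kappa^i$.
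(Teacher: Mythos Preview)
Your proof is correct and follows essentially the same approach as the paper: both derive \eqref{zetafunzdik}--\eqref{sommakzero} from the common junction velocity $V=\kappa^i\nu^i+\zeta^i\tau^i$, obtain \eqref{derivatatau} by differentiating $\sum_i\tau^i=0$ in time via \eqref{derttau}, deduce \eqref{allagiunzione} by multiplying by $\kappa^i$ and summing, and reduce \eqref{lowerboundaryterm} to the same cyclic algebraic identity. Your presentation is slightly more streamlined---using the geometric relation $\nu^{i-1}-\nu^{i+1}=\sqrt3\,\tau^i$ directly for \eqref{zetafunzdik}, and isolating the identity $\sqrt3\,\zeta^i\kappa^i=(\kappa^{i+1})^2-(\kappa^{i-1})^2$ up front for \eqref{lowerboundaryterm}---but the substance is the same.
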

\begin{proof}
Let $\gamma^1$, $\gamma^2$ and $\gamma^3$ be tree curves concurring at a
triple junction $\mathcal{O}$, and suppose without loss of generality that
$\gamma^1(t,0)=\gamma^2(t,0)=\gamma^3(t,0)=\mathcal{O}$
for every $t\in [0,T)$ (for simplicity in the following we omit the dependence on $(t,0)$).
Differentiating in time this concurrency condition we obtain that the velocity of the three curves
at $\mathcal{O}$ are the same, that is 
$$
\zeta^{i}\tau^{i}+\kappa^{i}\nu^{i}=
\zeta^{j}\tau^{j}+\kappa^{j}\nu^{j}\,,
$$
for every $i,j\in\{1,2,3\}$.\\
Multiplying these vector identities
by $\tau^{l}$ and $\nu^{l}$ and varying $i, j, l$,  thanks to the
conditions $\sum_{i=1}^{3}\tau^{i}=\sum_{i=1}^{3}\nu^{i}=0$, we get
the relations
\begin{gather*}
\zeta^{i}=-\zeta^{i+1}/2-\sqrt{3}\kappa^{i+1}/2\,,\\
\zeta^{i}=-\zeta^{i-1}/2+\sqrt{3}\kappa^{i-1}/2\,,\\
\kappa^{i}=-\kappa^{i+1}/2+\sqrt{3}\zeta^{i+1}/2\,,\\
\kappa^{i}=-\kappa^{i-1}/2-\sqrt{3}\zeta^{i-1}/2\,,
\end{gather*}
that leads to
\begin{gather*}
\zeta^{i}=\frac{\kappa^{i-1}-\kappa^{i+1}}{\sqrt{3}}\,,\\
\kappa^{i}=\frac{\zeta^{i+1}-\zeta^{i-1}}{\sqrt{3}}\,,
\end{gather*}
which implies
\begin{equation}
\sum_{i=1}^3 \kappa^{i}=\sum_{i=1}^3\zeta^{i}=0\,,
\end{equation}
and so we obtained both~\eqref{zetafunzdik} and~\eqref{sommakzero}.
Now we differentiate in time the angle condition $\sum_{i=1}^3\tau^i=0$, getting
$\sum_{i=1}^3(\partial_s\kappa^i + \zeta^i\kappa^i)\nu^i=0$.
The fact that also $\sum_{i=1}^3\nu^i=0$ implies
\begin{equation*}
\partial_s\kappa^i + \zeta^i\kappa^i=\partial_s\kappa^j + \zeta^j\kappa^j\,.
\end{equation*}
Thus
\begin{equation}
0=(\partial_s\kappa^1 + \zeta^1\kappa^1)\sum_{i=1}^3 \kappa^{i}
=\sum_{i=1}^3 \kappa^{i}\partial_s\kappa^{i}+\zeta^{i}|\kappa^{i}|^2\,.
\end{equation}
Now we show equation~\eqref{lowerboundaryterm}.
Keeping in mind~\eqref{zetafunzdik} and~\eqref{sommakzero}, we obtain
\begin{align*}
\sqrt{3}\sum_{i=1}^3\zeta^i \kappa^i \partial_t\kappa^i &= \sum_{i=1}^3 \left(\kappa^{i-1}-\kappa^{i+1}\right) \kappa^i\partial_t\kappa^i \\
&=\sum_{i=1}^3 \left[\left(\kappa^{i+1}\right)^2-\left(\kappa^{i-1}\right)^2\right]\partial_t\kappa^i\,,
\end{align*}
and
\begin{align*}
\sqrt{3} \partial_t \sum_{i=1}^3\zeta^i \left( \kappa^i\right) ^2
&=\sqrt{3}\sum_{i=1}^3\left(\partial_t\zeta^i\left( \kappa^i\right)^2+2\zeta^i \kappa^i\partial_t\kappa^i\right)\\
&=\sum_{i=1}^3 \left(\partial_t\kappa^{i-1}-\partial_t\kappa^{i+1}\right)\left(\kappa^i\right)^2
  +2\sum_{i=1}^3\left(\kappa^{i-1}-\kappa^{i+1}\right)\kappa^i\partial_t\kappa^i\\
&=\sum_{i=1}^3\left[\left(\kappa^{i+1}\right)^2-\left(\kappa^{i-1}\right)^2
+2\kappa^ik^{i-1}-2\kappa^ik^{i+1}\right]\partial_t\kappa^i\\
&=\sum_{i=1}^3\left[\left(\kappa^{i+1}\right)^2-\left(\kappa^{i-1}\right)^2
-2(\kappa^{i-1}+\kappa^{i+1})\kappa^{i-1}+2(\kappa^{i-1}+\kappa^{i+1})\kappa^{i+1}\right]\partial_t\kappa^i\\
&=3\sum_{i=1}^3\left[\left(\kappa^{i+1}\right)^2-\left(\kappa^{i-1}\right)^2
\right]\partial_t\kappa^i\,,
\end{align*}
and so equality~\eqref{lowerboundaryterm} is proved.

\end{proof}

\begin{lem}\label{kinfty}
Let $\mathcal{N}_0$ be an initial regular tree--shaped network 
with end--points $P^1, \dots, P^\ell\in\partial\Omega$ (not necessary fixed). 
Suppose that  $\mathcal{N}_t$ is a solution to the motion by curvature 
in $[0,T)$ with initial datum  $\mathcal{N}_0$ such that the square of the curvature at the end--points of $\mathcal{N}_t$ is uniformly bounded in time by some constant $C\leq 0$.
Suppose that for every point $P\in\mathcal{N}_0$ there is a path 
from $P$ to an end--point on $\partial\Omega$ composed of at most 
 $n$ curves  of $\mathcal{N}_0$.
Then there exists a constant $D(n)$ depending only on $n$ such that it holds
\begin{equation}\label{inkappa}
\Vert\kappa\Vert_{L^\infty}^2 \leq   4^{n-1}C+D(n)\Vert\kappa\Vert_{L^2}\Vert\partial_s\kappa\Vert_{L^2}\,.
\end{equation}
\end{lem}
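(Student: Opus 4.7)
The proof I have in mind is by induction on the combinatorial distance $n$ from $P$ to the boundary, measured in the minimum number of curves of $\mathcal{N}_t$ one has to traverse. The two basic tools are the fundamental theorem of calculus along arclength, together with Cauchy--Schwarz, which gives for two points $P,Q$ on the same smooth curve $\gamma^i$
\begin{equation*}
\kappa^2(P)-\kappa^2(Q)=\int_Q^P 2\kappa\,\partial_s\kappa\,\mathrm{d}s\leq 2\,\Vert\kappa\Vert_{L^2(\gamma^i)}\Vert\partial_s\kappa\Vert_{L^2(\gamma^i)}\leq 2\,\Vert\kappa\Vert_{L^2}\Vert\partial_s\kappa\Vert_{L^2},
\end{equation*}
and, at every triple junction $\mathcal{O}$, the relation $\kappa^1(\mathcal{O})+\kappa^2(\mathcal{O})+\kappa^3(\mathcal{O})=0$ from equation~\eqref{sommakzero}, which upon squaring and applying the trivial inequality $(a+b)^2\leq 2a^2+2b^2$ yields, for each $i\in\{1,2,3\}$,
\begin{equation*}
\kappa^i(\mathcal{O})^2=\bigl(\kappa^{i+1}(\mathcal{O})+\kappa^{i+2}(\mathcal{O})\bigr)^2\leq 2\kappa^{i+1}(\mathcal{O})^2+2\kappa^{i+2}(\mathcal{O})^2.
\end{equation*}

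With these in hand, the induction on $n$ runs as follows. For the base case $n=1$, the point $P$ lies on a curve one of whose endpoints is a boundary vertex where $\kappa^2\leq C$; a single application of the FTC estimate gives $\kappa^2(P)\leq C+2\Vert\kappa\Vert_{L^2}\Vert\partial_s\kappa\Vert_{L^2}$, so we can take $D(1)=2$. For the inductive step, assume the bound holds at depth $n-1$ with constant $D(n-1)$, and let $P$ lie on a curve $\gamma^i$ whose combinatorial distance to the boundary is $n$. Then $\gamma^i$ has a triple-junction endpoint $\mathcal{O}$ from which both of the other two concurring curves $\gamma^{i+1},\gamma^{i+2}$ lie at distance at most $n-1$ from the boundary (this is what ``tree'' and the definition of depth buy us---there are no loops to worry about). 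By the inductive hypothesis applied to the points on $\gamma^{i+1}$ and $\gamma^{i+2}$ including $\mathcal{O}$ itself,
\begin{equation*}
\kappa^{i+1}(\mathcal{O})^2,\;\kappa^{i+2}(\mathcal{O})^2\;\leq\;4^{n-2}C+D(n-1)\Vert\kappa\Vert_{L^2}\Vert\partial_s\kappa\Vert_{L^2},
\end{equation*}
and the junction identity promotes this to
\begin{equation*}
\kappa^i(\mathcal{O})^2\;\leq\;4^{n-1}C+4D(n-1)\Vert\kappa\Vert_{L^2}\Vert\partial_s\kappa\Vert_{L^2}.
\end{equation*}
One more FTC step from $\mathcal{O}$ to $P$ along $\gamma^i$ then gives
\begin{equation*}
\kappa^2(P)\;\leq\;4^{n-1}C+\bigl(4D(n-1)+2\bigr)\Vert\kappa\Vert_{L^2}\Vert\partial_s\kappa\Vert_{L^2},
\end{equation*}
so the recursion $D(n)=4D(n-1)+2$ with $D(1)=2$ closes the induction and produces a constant depending only on $n$ (explicitly $D(n)=\tfrac{2}{3}(4^n-1)$).

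There is no real obstacle here; the only point that needs a moment of care is the combinatorial one in the inductive step, namely that \emph{both} of the other two curves at $\mathcal{O}$ have depth at most $n-1$, which is why the tree hypothesis and the ``$120$ degrees'' junction structure are both genuinely used---the first to guarantee that starting the journey at $\mathcal{O}$ along either of these two curves still leads to the boundary in at most $n-1$ further curves (no loops to trap us), and the second to provide the sum-zero identity that converts a one-sided bound into a two-sided one at the cost of the factor $4$ per junction.
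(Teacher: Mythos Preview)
Your approach---FTC along arclength plus the junction identity $\sum_i\kappa^i=0$, organised as an induction on the depth---is exactly the paper's. For $n=2$ your computation reproduces the paper's constant $D(2)=10$ verbatim, and for general $n$ the paper writes only ``we can argue similarly to get the conclusion by induction on $n$'', so you have actually spelled out more than the paper does.

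That said, the combinatorial claim driving your inductive step is not correct as stated. You assert that a depth-$n$ curve $\gamma^i$ always has a junction endpoint $\mathcal{O}$ at which \emph{both} of the other two concurring curves have depth $\leq n-1$. Here is a counterexample with $n=3$: let $e=\mathcal{O}\mathcal{O}''$; at $\mathcal{O}$ attach edges to $Q_1$ (a ``cherry'': $Q_1$ carries two leaf--edges) and to $Q_2$, with $Q_2$ in turn joined to two further cherries $S_1,S_2$; build the $\mathcal{O}''$-side symmetrically. One checks that every curve has depth $\leq 3$ and that $e$ has depth exactly $3$, but at $\mathcal{O}$ the neighbours are $\mathcal{O}Q_1$ (depth $2$) and $\mathcal{O}Q_2$ (depth $3$), and the picture at $\mathcal{O}''$ is the same. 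Thus the sum--zero relation at $\mathcal{O}$ requires a bound on $\kappa^{\mathcal{O}Q_2}(\mathcal{O})$, which is itself a depth-$3$ quantity, and your recursion $D(n)=4D(n-1)+2$ does not close. (Piecewise-constant curvatures on this tree with $\vert\kappa\vert=\sqrt{C}$ on every leaf--edge give $\vert\kappa^e\vert=6\sqrt{C}$, i.e.\ $\kappa^2=36C>4^{n-1}C=16C$, so the specific coefficient $4^{n-1}$ looks too optimistic in general.) The paper's one-line induction does not address this either; a correct argument for general trees needs a more careful ordering---e.g.\ repeatedly pruning cherries---and yields a constant $A(n)$ in front of $C$ that depends on the combinatorics of the tree rather than $4^{n-1}$. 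Fortunately only the qualitative bound $\Vert\kappa\Vert_{L^\infty}^2\leq A(n)C+D(n)\Vert\kappa\Vert_{L^2}\Vert\partial_s\kappa\Vert_{L^2}$ is ever used downstream, and the crucial application (the five-curve network in Lemma~\ref{kappaesse2} and Theorem~\ref{cross}) has $n=2$, where your argument is complete and correct.
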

\begin{proof}
Let us start from the simple case of a network $\mathcal{N}_0$ composed of five curves, 
two triple junctions $\mathcal{O}^1, \mathcal{O}^2$ and
four end--points $P^1, P^2, P^3, P^4$ on $\partial\Omega$. 
In this case $n=2$.
We call $\gamma^i$, for $i\in\{1,2,3,4\}$, the curve connecting $P^i$ with one of the two triple junctions and $\gamma^5$ the curve between the two triple junctions (see Figure~\ref{tree}).
\begin{figure}[H]
\begin{center}
\begin{tikzpicture}[scale=1]
\draw
 (-3.73,0) 
to[out= 50,in=180, looseness=1] (-2.3,0.7) 
to[out= 60,in=180, looseness=1.5] (-0.45,1.55) 
(-2.3,0.7)
to[out= -60,in=130, looseness=0.9] (-1,-0.3)
to[out= 10,in=100, looseness=0.9](0.1,-0.8)
(-1,-0.3)
to[out=-110,in=50, looseness=0.9](-2.7,-1.7);
\draw[color=black,scale=1,domain=-3.141: 3.141,
smooth,variable=\t,shift={(-1.72,0)},rotate=0]plot({2.*sin(\t r)},
{2.*cos(\t r)});
\path[font=\small]
 (-3.73,0) node[left]{$P^2$}
 (-2.9,-1.7)node[below]{$P^3$}
 (0.1,-0.8)node[right]{$P^4$}  
  (-0.40,1.6) node[right]{$P^1$}
   (-3,0.6) node[below] {$\gamma^2$}
   (-1.5,1.3) node[right] {$\gamma^1$}
   (-1.1,-1.2)[left] node{$\gamma^3$}
   (0,-0.8)[left] node{$\gamma^4$}
    (-1.3,0.5)[left] node{$\gamma^5$}
  (-2.45,1.3) node[below] {$\mathcal{O}^1$}
   (-1.4,-0.1) node[below] {$\mathcal{O}^2$};  
\end{tikzpicture}
\end{center}
\begin{caption}{A tree--like network with five curves.\label{tree}}
\end{caption}
\end{figure}
Fixed a time $t\in[0,T)$, let $P\in\gamma^i\subseteq\mathcal{N}_t$ for a certain  $i\in\{1,2,3,4\}$. Then
\begin{equation*}
[\kappa^i(P)]^2=[\kappa^i(P^i)]^2+ 2 \int_{P^i}^P \kappa \kappa_s\,\mathrm{d}s 
\leq C + 2 \Vert \kappa\Vert_{L^2} \Vert\partial_s\kappa\Vert_{L^2}\,,
\end{equation*}
hence, for every $P\in\mathcal{N}_t\setminus\gamma^5$ we have
\begin{equation*}
[\kappa^i(P)]^2\leq   C + 2 \Vert\kappa\Vert_{L^2} \Vert\partial_s\kappa\Vert_{L^2}\,.
\end{equation*}
Assume now instead that $P\in \gamma^5$. 
By the previous argument for all $i\in\{1, 2, 3, 4\}$ we have
\begin{align*}
[\kappa^i(\mathcal{O}^1)]^2\leq  C + 2 \Vert \kappa\Vert_{L^2} \Vert\partial_s\kappa\|_{L^2}\,,\\
[\kappa^i(\mathcal{O}^2)]^2\leq  C + 2 \Vert\kappa\Vert_{L^2}\Vert\partial_s\kappa\|_{L^2}\,.
\end{align*}
Moreover by~\eqref{sommakzero} we have $\kappa^5(\mathcal{O}^1)=-\kappa^1(\mathcal{O}^1)-\kappa^2(\mathcal{O}^1)$
and $\kappa^5(\mathcal{O}^2)=-\kappa^3(\mathcal{O}^2)-\kappa^4(\mathcal{O}^2)$,
hence
\begin{align*}
[\kappa^5(\mathcal{O}^1)]^2\leq 4C + 8\Vert \kappa\Vert_{L^2} \Vert\partial_s\kappa\Vert_{L^2}\,,\\
[\kappa^5(\mathcal{O}^2)]^2\leq 4C + 8\Vert \kappa\Vert_{L^2} \Vert\partial_s\kappa\Vert_{L^2}\,.
\end{align*}
Arguing as before, we get
\begin{equation*}
[\kappa^5(P)]^2 = [\kappa^5(\mathcal{O}^1)]^2 + 2 \int_{\mathcal{O}^1}^P\kappa\partial_s \kappa\, \mathrm{d}s 
\leq 4C + 8\Vert\kappa\Vert_{L^2} \Vert\partial_s\kappa\Vert_{L^2} + 2 \int_{\mathcal{O}^1}^P \kappa\partial_s \kappa\, \mathrm{d}s\,.
\end{equation*}
In conclusion, we get the uniform in time inequality for $\mathcal{N}_t$
\begin{equation*}
\Vert \kappa\Vert_{L^\infty}^2 \le   4C + 10\Vert\kappa\Vert_{L^2} \Vert\partial_s\kappa\Vert_{L^2}.
\end{equation*}
In the general case, since $\mathcal{N}_t$ are all trees homeomorphic to $\mathcal{N}_0$, we can argue similarly to get the conclusion by induction on $n$.
\end{proof}

\begin{lem}\label{kappa2}
Let $\Omega\subseteq\mathbb{R}^2$ be open, convex and regular, 
let $\mathcal{N}_0$ be a tree with end--points $P^1,\dots, P^\ell$ on $\partial\Omega$
and let $\mathcal{N}_t$  be a smooth evolution by curvature in $[0,T)$
with initial datum  $\mathcal{N}_0$.
Suppose that during the evolution the end--points $P^1,\dots, P^\ell$
either are fixed or there 
exist uniform (in time) constants $C_j$, for every $j\in\NN$, such that 
\begin{equation}\label{endsmooth}
\vert\partial_s^j\kappa(P^r,t)\vert+\vert\partial_s^j\zeta(P^r,t)\vert\leq C_j\,,
\end{equation}
for every $t\in[0,T)$ and $r\in\{1,\dots,\ell\}$.
Then $\Vert \kappa\Vert_{L^2}^2$ is uniformly bounded on $[0,\widetilde{T})$ by $\sqrt{2}\bigl[\Vert \kappa(\cdot,0)\Vert_{L^2}^2+1\bigr]$, where 
$$
\widetilde{T}=\min\,\Bigl\{ T, 1\big/
8C\,\bigl(\Vert \kappa(\cdot,0)\Vert_{L^2}^2+1\bigr)^2\Bigr\}\,.
$$
Here the constant $C$ depends only on the number $n\in\NN$ of Lemma~\ref{kinfty} and the constants $C_j$. 
\end{lem}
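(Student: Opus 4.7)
The plan is a classical parabolic energy estimate followed by an ODE comparison. First, I compute $\frac{d}{dt}\int_{\gamma^i}(\kappa^i)^2\,\mathrm{d}s$ on each curve using the evolution equations $\partial_t \kappa = \partial_s^2\kappa + \zeta\partial_s\kappa + \kappa^3$ and $\partial_t(\mathrm{d}s) = (\partial_s\zeta - \kappa^2)\mathrm{d}s$ recalled before Lemma~\ref{condizioni-giunzione}. A direct computation, noticing that $2\zeta\kappa\partial_s\kappa + \kappa^2\partial_s\zeta = \partial_s(\zeta\kappa^2)$, yields
\begin{equation*}
\frac{d}{dt}\int_{\gamma^i}(\kappa^i)^2\,\mathrm{d}s = \int_{\gamma^i}\bigl[2\kappa^i\partial_s^2\kappa^i + \partial_s(\zeta^i(\kappa^i)^2) + (\kappa^i)^4\bigr]\,\mathrm{d}s.
\end{equation*}
Integrating by parts once on the first term and summing over all $N$ curves of $\mathcal{N}_t$ produces the bulk contribution $-2\|\partial_s\kappa\|_{L^2}^2 + \|\kappa\|_{L^4}^4$ plus boundary terms at the triple junctions and at the external vertices.

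At a triple junction $\mathcal{O}^p$, the curves $\gamma^{pj}$ are parametrized so that $\gamma^{pj}(0)=\mathcal{O}^p$, hence the contribution equals $-\sum_{j=1}^3[2\kappa^{pj}\partial_s\kappa^{pj} + \zeta^{pj}(\kappa^{pj})^2]$. Using the junction relation \eqref{allagiunzione} to replace $\sum_j \kappa^{pj}\partial_s\kappa^{pj}$ with $-\sum_j\zeta^{pj}(\kappa^{pj})^2$, this simplifies to $\sum_{j=1}^3 \zeta^{pj}(\kappa^{pj})^2$, which by \eqref{zetafunzdik} is controlled by $C\|\kappa\|_{L^\infty(\mathcal{N}_t)}^3$. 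At an external vertex $P^r$, the boundary contribution $2\kappa\partial_s\kappa + \zeta\kappa^2$ is either $0$ (when $P^r$ is fixed, forcing $\kappa = \zeta = 0$ since $\partial_t\gamma = \kappa\nu + \zeta\tau = 0$ there) or bounded by a constant depending on $C_0, C_1$ via \eqref{endsmooth}. Altogether,
\begin{equation*}
\frac{d}{dt}\|\kappa\|_{L^2}^2 \leq -2\|\partial_s\kappa\|_{L^2}^2 + \|\kappa\|_{L^4}^4 + C\|\kappa\|_{L^\infty}^3 + C.
\end{equation*}

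Now I apply Lemma~\ref{kinfty}, which gives $\|\kappa\|_{L^\infty}^2 \leq C + D\|\kappa\|_{L^2}\|\partial_s\kappa\|_{L^2}$. Hence $\|\kappa\|_{L^4}^4 \leq \|\kappa\|_{L^\infty}^2\|\kappa\|_{L^2}^2 \leq C\|\kappa\|_{L^2}^2 + D\|\kappa\|_{L^2}^3\|\partial_s\kappa\|_{L^2}$, and $\|\kappa\|_{L^\infty}^3 \leq C + C(\|\kappa\|_{L^2}\|\partial_s\kappa\|_{L^2})^{3/2}$. Applying Young's inequality in the forms $\|\kappa\|_{L^2}^3\|\partial_s\kappa\|_{L^2}\leq \varepsilon\|\partial_s\kappa\|_{L^2}^2 + C_\varepsilon\|\kappa\|_{L^2}^6$ and $\|\kappa\|_{L^2}^{3/2}\|\partial_s\kappa\|_{L^2}^{3/2}\leq\varepsilon\|\partial_s\kappa\|_{L^2}^2 + C_\varepsilon\|\kappa\|_{L^2}^6$, and choosing $\varepsilon$ small enough to absorb the $\|\partial_s\kappa\|_{L^2}^2$ contributions into the negative bulk term, I obtain an inequality of the form
\begin{equation*}
\frac{d}{dt}\|\kappa\|_{L^2}^2 \leq C\bigl(1 + \|\kappa\|_{L^2}^6\bigr).
\end{equation*}

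Setting $y(t) = \|\kappa(\cdot,t)\|_{L^2}^2 + 1$ gives $y'(t) \leq C\,y(t)^3$, and separation of variables yields $y(t)^2 \leq y_0^2/(1 - 2C y_0^2\,t)$. On the interval $[0,\widetilde T)$ with $\widetilde T \leq 1/\bigl(8C y_0^4\bigr)$, the denominator is at least $1/2$, so $y(t) \leq \sqrt{2}\,y_0$, which is exactly the stated bound once the constants are reorganized. The main obstacle is getting the right cancellation at the triple junctions via \eqref{allagiunzione} so that only a cubic-in-$\|\kappa\|_{L^\infty}$ term survives (rather than a worse power involving $\partial_s\kappa$ directly), and then balancing the Young-inequality exponents so that the two different interpolation terms coming from $\|\kappa\|_{L^4}^4$ and from the junction contribution can both be absorbed into $-2\|\partial_s\kappa\|_{L^2}^2$; everything else is a routine energy estimate.
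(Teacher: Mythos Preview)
Your proof is correct and follows essentially the same path as the paper's: compute the time derivative of $\|\kappa\|_{L^2}^2$, use \eqref{allagiunzione} to reduce the junction boundary contribution to $\sum_j\zeta^{pj}(\kappa^{pj})^2\leq C\|\kappa\|_{L^\infty}^3$, interpolate via Lemma~\ref{kinfty} and Young's inequality to absorb the $\|\partial_s\kappa\|_{L^2}^2$ terms, and close with the ODE $y'\leq 2Cy^3$. The only slip is in the last line, where $\widetilde T\leq 1/(8Cy_0^4)$ should read $1/(8Cy_0^2)$ to match the statement (with $y_0=\|\kappa(\cdot,0)\|_{L^2}^2+1$); your own formula $y^2\leq y_0^2/(1-2Cy_0^2 t)$ already shows that $t\leq 1/(8Cy_0^2)$ makes the denominator at least $3/4\geq 1/2$.
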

\begin{proof}
By direct computations and the relation~\eqref{allagiunzione} we have
\begin{align}
\partial_t\int_{\mathcal{N}_t} \kappa^2\,\mathrm{d}s 
=&\int_{\mathcal{N}_t} 2\kappa\partial_t\kappa+\kappa^2(\partial_s\zeta-\kappa^2)\,\mathrm{d}s \\
=&\int_{\mathcal{N}_t} 2\kappa\left(\partial_s^2\kappa+\partial_s\kappa\zeta+\kappa^3\right)
+\kappa^2(\partial_s\zeta-\kappa^2)\,\mathrm{d}s \\
=& \int_{\mathcal{N}_t}-2(\partial_s\kappa)^2+\kappa^4\,\mathrm{d}s
+ \sum_{r=1}^\ell 2\kappa\partial_s\kappa+\zeta\kappa^2\,\biggr\vert_{\text{{ at    the end--point $P^r$}}}\\
-& \sum_{p=1}^m\sum_{i=1}^3 2\kappa^{pi}\partial_s\kappa^{pi}+\zeta^{pi}(\kappa^{pi})^2\,\biggr\vert_{\text{{ at    the $3$--point $O^p$}}}\\
\leq&\, -2\int_{\mathcal{N}_t} \partial_s\kappa^2\, \mathrm{d}s + \int_{\mathcal{N}_t} k^4\, \mathrm{d}s\nonumber
+ \sum_{p=1}^m\sum_{i=1}^3\zeta^{pi}\left(  k^{pi}\right) ^2\,\biggr\vert_{\text{{ at    the $3$--point $O^p$}}}+\ell C_0C_{1}\nonumber\\
\le&\,-2\int_{\mathcal{N}_t} \partial_s\kappa^2\,\mathrm{d}s + \|k\|_{L^\infty}^2\int_{\mathcal{N}_t}k^2\,\mathrm{d}s
+ C \|\kappa\|_{L^\infty}^3+C\,.\label{kappat}
\end{align}
By estimate~\eqref{inkappa} and the Young inequality, we then obtain
\begin{align*}
\Vert\kappa\Vert_{L^\infty}^3 
&\leq C_n + C_n \Vert\kappa\Vert_{L^2}^\frac 32 \Vert\partial_s\kappa\Vert_{L^2}^\frac 32 \le C_n+ \eps \Vert\partial_s\kappa\Vert_{L^2}^2 + C_{n,\eps} \Vert\kappa\Vert_{L^2}^6\,,\\
\Vert\kappa\Vert_{L^\infty}^2\Vert\kappa\Vert_{L^2}^2 &\le C_n \Vert\kappa\Vert_{L^2}^2 + D_n\Vert\kappa\Vert_{L^2}^3 \Vert\partial_s\kappa\Vert_{L^2} \le C_n \Vert\kappa\Vert_{L^2}^2+\eps \Vert\partial_s\kappa\Vert_{L^2}^2 + C_{n,\eps} \Vert\kappa\|_{L^2}^6\,,
\end{align*}
for every small $\varepsilon>0$ and a suitable constant 
$C_{n,\varepsilon}$.\\
Plugging these estimates into inequality~\eqref{kappat} we get
\begin{align}
\partial_t\int_{\mathcal{N}_t} k^2 \mathrm{d}s 
\leq &\,-2\Vert \partial_sk\Vert^2+ \|k\|_{L^\infty}^2\Vert k\Vert^2+ C \|k\|_{L^\infty}^3+C\nonumber\\
\leq &\,-2\Vert \partial_s\kappa\Vert^2+ C_n \|k\|_{L^2}^2+
 \eps \|\partial_s\kappa\|_{L^2}^2 + C_{n,\eps} \|k\|_{L^2}^6
+C_n+ \eps \|\partial_s\kappa\|_{L^2}^2 + C_{n,\eps} \|k\|_{L^2}^6+C_n\nonumber\\
\leq&\, C\Bigl(  \int_{\mathcal{N}_t} k^2 \mathrm{d}s\Bigr)^3+C\,,\label{kappatt}
\end{align}
Where we chose $\eps=1/2$ and the constant $C$ depends only on the number $n\in\NN$ of Lemma~\ref{kinfty} and the constants in condition~\eqref{endsmooth}. 

Calling $y(t)= \int_{\mathcal{N}_t} k^2\, \mathrm{d}s+1$, we can rewrite inequality~\eqref{kappatt} as the differential ODE 
$$
y'(t)\leq 2Cy^3(t)\,,
$$
hence, after integration, we get
$$
y(t)\leq \frac{1}{\sqrt{\frac{1}{y^2(0)}-4Ct}}
$$
and, choosing $\widetilde{T}$ as in the statement, the conclusion is straightforward.
\end{proof}

We prove now that if we consider a tree--like network composed of five curves
with two triple junctions and four end--points on $\partial\Omega$, which move in a ``controlled way'', as in the previous lemma, such that the boundary curves do not vanish during the evolution, then the $L^2$--norm of $\partial_s\kappa$ is bounded until $\Vert k\Vert_{L^2}$ stays bounded.
This is the crucial local estimate needed in the proof of Theorem~\ref{cross}.

\begin{lem}\label{kappaesse2}
Let $\Omega\subseteq\mathbb{R}^2$ be open, convex and regular, 
let $\mathcal{N}_0$ be a tree with five curves, 
two triple junctions $\mathcal{O}^1, \mathcal{O}^2$ and
four end--points $P^1, P^2, P^3, P^4$ on $\partial\Omega$, as in Figure~\ref{tree}, satisfying assumption~\eqref{endsmooth} and assume that $\mathcal{N}_t$, for $t\in [0,T)$, is a smooth evolution by curvature of the network $\mathcal{N}_0$ such that $\Vert \kappa\Vert_{L^2}$ is uniformly bounded on 
$[0,T)$.\\
If the lengths of the curves of the network arriving at the end--points are uniformly bounded below by some constant $L>0$, then $\Vert \partial_s\kappa\Vert_{L^2}$ is uniformly bounded on $[0,T)$.
\end{lem}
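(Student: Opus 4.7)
The plan is to derive a differential inequality for $y(t) := \Vert\partial_s\kappa\Vert_{L^2}^2$ whose structure, combined with the uniform bound on $\Vert\kappa\Vert_{L^2}$, rules out blow-up on $[0,T)$. Using the commutation rule~\eqref{commut} and the evolution equation~\eqref{dertdik}, a direct computation gives
\begin{equation*}
\partial_t(\partial_s\kappa) = \partial_s^3\kappa + \zeta\,\partial_s^2\kappa + 4\kappa^2\,\partial_s\kappa\,,
\end{equation*}
and, combined with $\partial_t(\mathrm{d}s)=(\partial_s\zeta-\kappa^2)\,\mathrm{d}s$ and integration by parts on each curve,
\begin{equation*}
\frac{d}{dt}\int_{\mathcal{N}_t}(\partial_s\kappa)^2\,\mathrm{d}s = -2\int_{\mathcal{N}_t}(\partial_s^2\kappa)^2\,\mathrm{d}s + 7\int_{\mathcal{N}_t}\kappa^2(\partial_s\kappa)^2\,\mathrm{d}s + B(t)\,,
\end{equation*}
where $B(t)$ collects the boundary contributions $\bigl[\,2\partial_s\kappa\,\partial_s^2\kappa + \zeta(\partial_s\kappa)^2\,\bigr]$ at the two triple junctions and the four end-points, with signs induced by the chosen orientation.

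For the interior ``bad'' term, Lemma~\ref{kinfty} (applied with $n=2$) combined with the hypothesis $\Vert\kappa\Vert_{L^2}\leq M$ yields $\Vert\kappa\Vert_\infty^2 \leq C_1+C_2\sqrt{y}$, hence $\int\kappa^2(\partial_s\kappa)^2\leq C_1 y + C_2 y^{3/2}$. An integration by parts also gives $y\leq M\Vert\partial_s^2\kappa\Vert_{L^2}+|[\kappa\partial_s\kappa]_\partial|$; assuming the right-hand boundary term has been controlled (see below), one deduces $\Vert\partial_s^2\kappa\Vert_{L^2}^2\geq c_0\, y^2-C$. Young's inequality then absorbs the interior bad term into the dissipation $-2\int(\partial_s^2\kappa)^2$ modulo a lower-order polynomial in $y$.

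The main obstacle is controlling $B(t)$. At each fixed end-point $P^r$, hypothesis~\eqref{endsmooth} directly bounds $\kappa$, $\zeta$, $\partial_s\kappa$ and $\partial_s^2\kappa$, so the corresponding contribution is uniformly bounded. At each triple junction $\mathcal{O}^p$, Lemma~\ref{condizioni-giunzione} — specifically the linear relation~\eqref{zetafunzdik}, the sum rule~\eqref{sommakzero}, and the fact from~\eqref{derivatatau} that $\partial_s\kappa^{pi}+\zeta^{pi}\kappa^{pi}$ is independent of $i$ — reduces $\sum_{i=1}^3\bigl[2\partial_s\kappa^{pi}\partial_s^2\kappa^{pi}+\zeta^{pi}(\partial_s\kappa^{pi})^2\bigr]$ to a polynomial in the pointwise junction values $\kappa^{pi}(\mathcal{O}^p)$ and $\partial_s\kappa^{pi}(\mathcal{O}^p)$, with any surviving $\partial_s^2\kappa^{pi}$ rewritten via the PDE. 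These pointwise values are propagated from the boundary: for a boundary curve $\gamma^i$ from $P^r$ to $\mathcal{O}^p$,
\begin{equation*}
\kappa(\mathcal{O}^p)=\kappa(P^r)+\int_{P^r}^{\mathcal{O}^p}\partial_s\kappa\,\mathrm{d}s\,,\qquad \partial_s\kappa(\mathcal{O}^p)=\partial_s\kappa(P^r)+\int_{P^r}^{\mathcal{O}^p}\partial_s^2\kappa\,\mathrm{d}s\,,
\end{equation*}
and Cauchy--Schwarz together with $L\leq L^i\leq\mathrm{diam}(\Omega)$ give $|\kappa(\mathcal{O}^p)|\leq C(1+\sqrt{y})$ and $|\partial_s\kappa(\mathcal{O}^p)|\leq C(1+\Vert\partial_s^2\kappa\Vert_{L^2})$; it is here that the uniform lower bound on boundary-curve lengths is essential, as it keeps the one-dimensional Sobolev estimates non-degenerate. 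A final application of Young's inequality yields $|B(t)|\leq\varepsilon\Vert\partial_s^2\kappa\Vert_{L^2}^2+C(1+y^\alpha)$ for some $\alpha<2$.

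Combining everything with $\varepsilon$ chosen small, one arrives at
\begin{equation*}
y'(t)\leq -c_0\, y(t)^2+C\bigl(1+y(t)^\alpha\bigr)\,,
\end{equation*}
where the quadratic dissipative term dominates for large $y$. Standard ODE comparison then shows that $y(t)\leq\max\{y(0),Y_*\}$, where $Y_*$ is the largest positive root of $c_0 Y^2=C(1+Y^\alpha)$, giving the desired uniform bound on $[0,T)$. The technically delicate point is the reorganization of the junction boundary terms so that their dependence on pointwise junction values is made explicit enough to be estimated via boundary-curve propagation, and it is precisely the hypothesis $L^i\geq L$ that makes this step quantitative.
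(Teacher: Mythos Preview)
Your overall strategy --- compute $\partial_t\int(\partial_s\kappa)^2$, control the interior cubic term via Lemma~\ref{kinfty} and the bound on $\Vert\kappa\Vert_{L^2}$, and then close via an ODE comparison --- matches the paper's architecture. The serious gap is in your treatment of the triple--junction boundary contribution
\[
\sum_{i=1}^3\bigl(2\,\partial_s\kappa^{pi}\,\partial_s^2\kappa^{pi}+\zeta^{pi}(\partial_s\kappa^{pi})^2\bigr)\,\Big\vert_{\mathcal{O}^p}\,.
\]
You assert that the identities of Lemma~\ref{condizioni-giunzione} reduce this to a polynomial in the junction values $\kappa^{pi},\,\partial_s\kappa^{pi}$, ``with any surviving $\partial_s^2\kappa^{pi}$ rewritten via the PDE''. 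But substituting $\partial_s^2\kappa=\partial_t\kappa-\zeta\partial_s\kappa-\kappa^3$ leaves the term $\sum_i\partial_s\kappa^i\,\partial_t\kappa^i$, and after using $\partial_s\kappa^i+\zeta^i\kappa^i=\mathrm{const}$ together with $\sum_i\partial_t\kappa^i=0$ one is still left with $-\sum_i\zeta^i\kappa^i\,\partial_t\kappa^i$, which is \emph{not} expressible as a polynomial in $\kappa^{pi},\partial_s\kappa^{pi}$ alone. Equivalently, if you keep $\partial_s^2\kappa^{pi}$ explicit, its pointwise value at $\mathcal{O}^p$ cannot be bounded by $\Vert\partial_s^2\kappa\Vert_{L^2}$ through propagation along a boundary curve without invoking $\partial_s^3\kappa$. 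Hence the claimed estimate $|B(t)|\le\varepsilon\Vert\partial_s^2\kappa\Vert_{L^2}^2+C(1+y^\alpha)$ with $\alpha<2$ is not established.

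The paper resolves exactly this obstruction, and this is the heart of the argument: identity~\eqref{lowerboundaryterm} gives $3\sum_i\zeta^i\kappa^i\,\partial_t\kappa^i=\partial_t\sum_i\zeta^i(\kappa^i)^2$, so the dangerous piece of the junction term is recognised as a \emph{total time derivative} (see computation~\eqref{kessekesseesse}). It is moved to the left--hand side, and Gronwall is applied to the \emph{modified energy}
\[
\int_{\mathcal{N}_t}(\partial_s\kappa)^2\,\mathrm{d}s\;-\;\frac{2}{3}\sum_{p=1}^2\sum_{i=1}^3\zeta^{pi}(\kappa^{pi})^2\Big\vert_{\mathcal{O}^p}\,.
\]
The residual junction terms are then genuinely of the form $\sum_i\bigl(2(\kappa^i)^3\partial_s\kappa^i+\zeta^i(\partial_s\kappa^i)^2\bigr)$, which --- after the further algebra around~\eqref{Eq1} --- are bounded by $C K^5+C|\partial_s\kappa^i(\mathcal{O}^p)|K^3$ with $K=\max_j|\kappa^j(\mathcal{O}^p)|$, and it is at \emph{this} point (not earlier) that the Gagliardo--Nirenberg inequalities~\eqref{GN3}--\eqref{GN1} on the boundary curves, using $L^i\ge L$, absorb everything into $\varepsilon\Vert\partial_s^2\kappa\Vert_{L^2}^2+C_{L,\varepsilon}$. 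The resulting differential inequality is linear (Gronwall), not of the dissipative $y'\le -c_0y^2+\dots$ type you propose; the extra correction term in the modified energy is then removed a posteriori via $\Vert\kappa\Vert_{L^\infty}^3\le C_\varepsilon+\varepsilon\Vert\partial_s\kappa\Vert_{L^2}^2$. Your outline becomes correct once you insert this time--derivative trick for the junction term; without it, the boundary estimate does not close.
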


\begin{proof}
We first estimate $\Vert \partial_s\kappa\Vert^2_{L^\infty}$ in terms of $\|\partial_s\kappa\|_{L^2}$ and $\|\partial_s^2\kappa\|_{L^2}$.\\
Fixed a time $t\in[0,T)$, let $Q\in\gamma^i\subseteq\mathcal{N}_t$, for some $i\le 4$. We compute
\begin{eqnarray*}
[\partial_s\kappa^i(Q)]^2 &=& [\partial_s\kappa^i(P^i)]^2 + 2 \int_{P^i}^Q \partial_s\kappa \partial_s^2\kappa\,\mathrm{d}s \le C + 2 \|\partial_s\kappa\|_{L^2} \|\partial_s^2\kappa\|_{L^2}\,,
\end{eqnarray*}
hence, in this case, 
\[
[\partial_s\kappa^i(Q)]^2 \le   C + 2 \|\partial_s\kappa\|_{L^2} \|\partial_s^2\kappa\|_{L^2}\,,
\]
for every $Q\in\mathcal{N}_t\setminus\gamma^5$.\\
Assume now instead that $Q\in \gamma^5$. Recalling~\eqref{derivatatau}, we get 
$$
\partial_s\kappa^5(\mathcal{O}^1)=\partial_s\kappa^i(\mathcal{O}^1)
+\zeta^i(\mathcal{O}^1)\kappa^i(\mathcal{O}^1)-\zeta^5(\mathcal{O}^1)\kappa^5(\mathcal{O}^1)\,,
$$
hence,
\begin{align*}
\vert \partial_s\kappa^5(\mathcal{O}^1)\vert&\leq \vert \partial_s\kappa^i(\mathcal{O}^1)\vert +C\Vert \kappa\Vert^2_{L^\infty}\\
&\leq \vert \partial_s\kappa^i(\mathcal{O}^1)\vert +C\Vert \kappa\Vert_{L^2}\Vert \partial_s\kappa\Vert_{L^2}+C\\
&\leq \vert \partial_s\kappa^i(\mathcal{O}^1)\vert +C\left(1+\Vert \partial_s\kappa\Vert_{L^2}\right)\,,
\end{align*}
by Lemma~\ref{kappa2}. Then,
\begin{equation}\label{kesse5}
[\partial_s\kappa^5(\mathcal{O}^1)]^2\leq 2[\partial_s\kappa^i(\mathcal{O}^1)]^2+C\left(1+\Vert \partial_s\kappa\Vert^2_{L^2}\right)
\end{equation}
and it follows
\begin{eqnarray*}
[\partial_s\kappa^5(Q)]^2 &=& [\partial_s\kappa^5(\mathcal{O}^1)]^2 + 2 \int_{\mathcal{O}^1}^Q \partial_s\kappa \partial_s^2\kappa\,\mathrm{d}s\\ 
&\le& 2[\partial_s\kappa^i(\mathcal{O}^1)]^2 + C\left(1+\Vert \partial_s\kappa\Vert^2_{L^2}\right)+ 2 \int_{\mathcal{O}^1}^Q \partial_s\kappa \partial_s^2\kappa\,\mathrm{d}s\\
&\le& C + C\Vert \partial_s\kappa\Vert^2_{L^2}+2 \|\partial_s\kappa\|_{L^2} \|\partial_s^2\kappa\|_{L^2}\,,
\end{eqnarray*}
since, by the previous argument, we have $[\kappa^i_s(\mathcal{O}^1)]^2, [\kappa^i_s(\mathcal{O}^2)]^2\leq  C + 2 \|\partial_s\kappa\|_{L^2} \|\partial_s^2\kappa\|_{L^2}$, for all $i\in\{1, 2, 3, 4\}$. Hence, we conclude
\begin{equation}
\Vert \partial_s\kappa\Vert_{L^\infty}^2\leq C + C\Vert \partial_s\kappa\Vert^2_{L^2}+2 \|\partial_s\kappa\|_{L^2} \|\partial_s^2\kappa\|_{L^2}\,.
\end{equation}

We now pass to estimate $\|\partial_s\kappa\|_{L^2}$:

\begin{align}
\partial_t\int_{\mathcal{N}_t} (\partial_s\kappa)^2\,\mathrm{d}s 
=&\int_{\mathcal{N}_t} 2\partial_s\kappa\partial_t\partial_s\kappa+(\partial_s\kappa)^2(\partial_s\zeta-\kappa^2)\,\mathrm{d}s \\
=&\int_{\mathcal{N}_t} 2\partial_s\kappa
\left(\partial_s^3\kappa+\partial_s^2\kappa\zeta+4\kappa^2\partial_s\kappa\right)
+(\partial_s\kappa)^2(\partial_s\zeta-\kappa^2)\,\mathrm{d}s \\
=& \int_{\mathcal{N}_t}-2(\partial_s^2\kappa)^2+7\kappa^2(\partial_s\kappa)^2\,\mathrm{d}s
+ \sum_{r=1}^\ell 2\partial_s\kappa\partial_s^2\kappa+\zeta(\partial_s\kappa)^2\,\biggr\vert_{\text{{ at    the end--point $P^r$}}}\\
-& \sum_{p=1}^m\sum_{i=1}^3 2\partial_s\kappa^{pi}\partial_s^2\kappa^{pi}+\zeta^{pi}(\partial_s\kappa^{pi})^2\,\biggr\vert_{\text{{ at    the $3$--point $O^p$}}}\\
\leq&
-2\int_{\mathcal{N}_t} \partial_s^2\kappa^2\,\mathrm{d}s + 7\int_{\mathcal{N}_t} k^2\partial_s\kappa^2\,\mathrm{d}s+\ell C_{1}C_2\\
-&\sum_{p=1}^2\sum_{i=1}^3 2\partial_s k^{pi} \partial_s^2k^{pi}+\zeta ^{pi} \left( \partial_s k^{pi}\right) ^2\,
\biggr\vert_{\text{{ at    the $3$--point $O^p$}}}\,.\label{kappast}
\end{align}
Now we use~\eqref{lowerboundaryterm} to lower the differentiation order of  
the term $\sum_{i=1}^3\partial_s\kappa^i \partial_s^2\kappa^i$. From the formula $\dert k=\partial_s^2\kappa+\partial_s\kappa\zeta + k^3$ and from the fact that $\sum_{i=1}^3\partial_t\kappa^i=\partial_t\sum_{i=1}^3\kappa^i=0$, we get 
\begin{eqnarray}
\sum_{i=1}^3\partial_s\kappa^i \partial_s^2\kappa^i 
&=& \sum_{i=1}^3\partial_s\kappa^i\bigl[\partial_t\kappa_{t}-\zeta^i  \partial_s\kappa^i-\left( \kappa^i\right) ^3\bigr]\nonumber
\\
&=& \sum_{i=1}^3\bigl(\partial_s\kappa^i+\zeta^i \kappa^i-\zeta^i \kappa^i\bigr)\partial_t\kappa^i
-\sum_{i=1}^3\zeta^i \left( \partial_s\kappa^i\right)^2 + \left( \kappa^i\right)^3 \partial_s\kappa^i\nonumber
\\
&=& \sum_{i=1}^3\bigl(\partial_s\kappa^i+\zeta^i \kappa^i\bigr)\partial_t\kappa^i
-\sum_{i=1}^3\zeta^i \kappa^i \partial_t\kappa^i
-\sum_{i=1}^3\left[\zeta^i \left( \partial_s\kappa^i\right)^2 + \left( \kappa^i\right)^3 \partial_s\kappa^i\right]\nonumber
\\
&=& - \partial_t \sum_{i=1}^3\zeta^i \left( \kappa^i\right) ^2\bigr/3 
-\sum_{i=1}^3\left[\zeta^i \left( \partial_s\kappa^i\right)^2 + \left( \kappa^i\right) ^3 \partial_s\kappa^i\right]\,,\label{kessekesseesse}
\end{eqnarray}
at the triple junctions $\mathcal{O}^1$ and $\mathcal{O}^2$, where we used the fact that $\partial_s\kappa^i+\zeta^i \kappa^i$ is independent of $i\in\{1, 2, 3\}$.\\
Substituting this equality into estimate~\eqref{kappast}, we obtain
\begin{align}
\partial_t \int_{\mathcal{N}_t} (\partial_s\kappa)^2\,\mathrm{d}s
\le&\, -2\int_{\mathcal{N}_t} \partial_s^2\kappa^2\,\mathrm{d}s + 7\int_{\mathcal{N}_t} k^2\partial_s\kappa^2\,\mathrm{d}s+ \sum_{p=1}^2\sum_{i=1}^3 2\left( k^{pi}\right) ^3 \partial_sk^{pi}+\zeta^{pi} \left( \partial_s k^{pi}\right)^2\,
\biggr\vert_{\text{{ at    the $3$--point $O^p$}}}\nonumber\\
&\,+ 2\partial_t \sum_{p=1}^2\sum_{i=1}^3\zeta^{pi} \left( k^{pi}\right) ^2\bigr/3\,
\biggr\vert_{\text{{ at    the $3$--point $O^p$}}}+C\nonumber\\
\le&\, 
-2\int_{\mathcal{N}_t} (\partial_s^2\kappa)^2\,\mathrm{d}s +C\Vert k\Vert^2_{L^2}\Vert \partial_s\kappa\Vert^2_{L^\infty}
+ \sum_{p=1}^2\sum_{i=1}^3 2\left( k^{pi}\right) ^3 \partial_s k^{pi}+\zeta^{pi} \left(\partial_s k^{pi}\right)^2\,
\biggr\vert_{\text{{ at    the $3$--point $O^p$}}}\nonumber\\
&\,+ 2\partial_t \sum_{p=1}^2\sum_{i=1}^3\zeta^{pi}
\left(k^{pi}\right)^2\bigr/3\,
\biggr\vert_{\text{{ at    the $3$--point $O^p$}}}+C\,.\label{kappast2}
\end{align}
Using the previous estimate on $\Vert \partial_s\kappa \Vert_{L^\infty}$,
the hypothesis of uniform boundedness of $\Vert \kappa\Vert_{L^2}$ and Young inequality, we get
\begin{align*}
\Vert \kappa\Vert^2_{L^2}\Vert \partial_s\kappa\Vert^2_{L^\infty}
&\leq C+C\Vert \partial_s\kappa\Vert_{L^2}^2+C\Vert \partial_s\kappa\Vert_{L^2}\Vert \partial_s^2\kappa\Vert_{L^2}\\
&\leq C+C\Vert \partial_s\kappa\Vert_{L^2}^2+C_\varepsilon \Vert \partial_s\kappa\Vert_{L^2}^2+\varepsilon\Vert \partial_s^2\kappa\Vert_{L^2}^2\\
&=C+C_\varepsilon \Vert \partial_s\kappa\Vert_{L^2}^2+\varepsilon\Vert \partial_s^2\kappa\Vert_{L^2}^2\,,
\end{align*}
for any small value $\eps>0$ and a suitable constant $C_\eps$.\\
We deal now with the boundary term $\sum_{i=1}^3 2\left(\kappa^i\right)^3 \partial_s\kappa^i+\zeta^i \left( \partial_s\kappa^i\right)^2$.\\
Combining~\eqref{derivatatau} and~\eqref{sommakzero}
 it follows that
$\left(\partial_s\kappa+\zeta k\right)^2\sum_{i=1}^3\zeta^i=0$, hence,
$$
\sum_{i=1}^3\zeta^i \left( \partial_s\kappa^i\right)^2
=-\sum_{i=1}^3\left( \zeta^i\right)^3\left(\kappa^i\right)^2+2\left(\zeta^i\right)^2 \kappa^i\partial_s\kappa^i\,,$$
then, we can write
\begin{align}
\sum_{i=1}^3 2\left(\kappa^i\right)^3 \partial_s\kappa^i+\zeta^i \left(\partial_s\kappa^i\right)^2
=&\,\sum_{i=1}^3 \left[2\left(\kappa^i\right)^3 \partial_s\kappa^i
-\left( \zeta^i\right)^3\left(\kappa^i\right)^2-2\left(\zeta^i\right)^2 \kappa^i\partial_s\kappa^i\right]\nonumber\\
=&\,\sum_{i=1}^3 2\bigl[\left( \kappa^i\right)^3-\left( \zeta^i\right)^2 \kappa^i\bigr] \partial_s\kappa^i -\sum_{i=1}^3\left( \zeta^i\right)^3 \left( \kappa^i\right) ^2\nonumber\\
=&\,2(\partial_s\kappa+\zeta k)\sum_{i=1}^3\left[\left( \kappa^i\right)^3-\left( \zeta^i\right)^2 \kappa^i\right]
+\sum_{i=1}^3 \left[\left( \zeta^i\right)^3 \left(\kappa^i\right)^2-2\zeta^i\left( \kappa^i\right)^4\right]\,.\label{Eq1}
\end{align}
At the triple junction $\mathcal{O}^1$, where the curves $\gamma^1,\gamma^2$ and $\gamma^5$
concur, there exists $i\in\{1,2\}$ such that $\vert \kappa^i(\mathcal{O}^1)\vert\geq \frac{K}{2}$,
where $K=\max_{j\in\{1, 2, 5\}}\vert k^j(\mathcal{O}^1)\vert$.
Moreover we remind the reader that at the $3$--point $\mathcal{O}^1$, for $j\in\{1,2,5\}$, we have that
$\vert \zeta^j\vert \leq CK$. Hence at the $3$--point $\mathcal{O}^1$ it holds
\begin{align*}
2(\partial_s\kappa+\zeta k)\sum_{i=1}^3\left[\left( \kappa^i\right)^3-\left( \zeta^i\right)^2 \kappa^i\right]+\sum_{i=1}^3 &\,\left[\left( \zeta^i\right)^3 \left(\kappa^i\right)^2-2\zeta^i\left( \kappa^i\right)^4\right]\\
& \le C K^5+C\vert \partial_s\kappa^i(\mathcal{O}^1)\vert K^3
\\
&\leq C\vert \kappa^i(\mathcal{O}^1)\vert^5+C\vert \partial_s\kappa^i(\mathcal{O}^1)\vert \vert \kappa^i(\mathcal{O}^1)\vert^3
\\
&\leq C\Vert \kappa^i\Vert_{L^\infty(\gamma^i)}^5+C\Vert \partial_s\kappa^i\Vert_{L^\infty(\gamma^i)}\Vert \kappa^i\Vert_{L^\infty(\gamma^i)}^3\,.
\end{align*}
By hypothesis the length of the curves $\gamma^1$ and $\gamma^2$ is uniformly bounded away from zero, so we can estimate  $C\Vert \kappa^i\Vert_{L^\infty(\gamma^i)}^5+C\Vert \partial_s\kappa^i\Vert_{L^\infty(\gamma^i)}\Vert \kappa^i\Vert_{L^\infty(\gamma^i)}^3$ via
Gagliardo--Nirenberg interpolation inequalities. We get 
\begin{align}
\Vert \kappa^i\Vert_{L^\infty(\gamma^i)}&\leq C\Vert \partial_s^2\kappa^i\Vert^{\frac14}_{L^2(\gamma^i)}\Vert \kappa^i\Vert_{L^2(\gamma^i)}^{\frac34}
+\frac{B}{L^{\frac12}}\Vert \kappa^i\Vert_{L^2(\gamma^i)}
\leq C\Vert \partial_s^2\kappa^i\Vert_{L^2(\gamma^i)}^\frac14+C_L\label{GN3}\\
\Vert \partial_s\kappa^i\Vert_{L^\infty(\gamma^i)}&\leq C\Vert \partial_s^2\kappa^i\Vert^{\frac34}_{L^2(\gamma^i)}\Vert \kappa^i\Vert_{L^2(\gamma^i)}^{\frac14}
+\frac{B}{L^{\frac32}}\Vert \kappa^i\Vert_{L^2(\gamma^i)}
\leq C\Vert \partial_s^2\kappa^i\Vert_{L^2(\gamma^i)}^\frac34+C_L\label{GN1}\,,
\end{align} 
hence,
$$
C\Vert \kappa^i\Vert_{L^\infty(\gamma^i)}^5+C\Vert \kappa^i\Vert_{L^\infty(\gamma^i)}^3\Vert \partial_s\kappa^i\Vert_{L^\infty(\gamma^i)}
\leq
C\Vert \partial_s^2\kappa^i\Vert_{L^2(\gamma^i)}^\frac54+
C\Vert \partial_s^2\kappa^i\Vert_{L^2(\gamma^i)}^\frac32+ C_L
\leq
\varepsilon\Vert \partial_s^2\kappa^i\Vert_{L^2(\gamma^i)}^{2}+C_{L,\varepsilon}\,.
$$
Thus, finally, 
$$
2(\partial_s\kappa+\zeta \kappa)\sum_{i=1}^3\left[\left( \kappa^i\right)^3-\left( \zeta^i\right)^2 \kappa^i\right]+\sum_{i=1}^3\left[\left( \zeta^i\right)^3 \left(\kappa^i\right)^2-2\zeta^i\left( \kappa^i\right)^4\right]\leq 
\varepsilon\Vert \partial_s^2\kappa^i\Vert_{L^2(\gamma^i)}^{2}+C_{L,\varepsilon}\leq 
\varepsilon\Vert \partial_s^2\kappa\Vert_{L^2}^{2}+C_{L,\varepsilon}\,.
$$
Coming back to computation~\eqref{kappast2}, we have
\begin{align*}
\partial_t & \biggl(\int_{\mathcal{N}_t} \partial_s\kappa^2\, \mathrm{d}s-2 \sum_{p=1}^2\sum_{i=1}^3\zeta^{pi} \left(k^{pi}\right)^2
\,\big/3 \,\biggr\vert_{\text{{ at    the $3$--point $O^p$}}}\biggr)\\
&\le -2\int_{\mathcal{N}_t} \partial_s^2\kappa^2 \mathrm{d}s+C\Vert \partial_s\kappa\Vert_{L^2}^2+\varepsilon\Vert \partial_s^2\kappa\Vert_{L^2}^2+C_{L,\varepsilon}\\
&\le -2\int_{\mathcal{N}_t} \partial_s^2\kappa^2 \mathrm{d}s+C\Vert \partial_s\kappa\Vert_{L^2}^2+2\varepsilon\Vert \partial_s^2\kappa\Vert_{L^2}^2-C_{L,\eps}\Vert \kappa^i\Vert_{L^\infty(\gamma^i)}^3+C_{L,\varepsilon}\\
&\le C_{L,\varepsilon}\biggl(\int_{\mathcal{N}_t} \partial_s\kappa^2\, \mathrm{d}s-2 \sum_{p=1}^2\sum_{i=1}^3\zeta^{pi} \left(k^{pi}\right)^2\,\bigl/3
\, \biggr\vert_{\text{{ at    the $3$--point $O^p$}}}\biggr)+C_{L,\varepsilon}\,,
\end{align*}
where we chose $\varepsilon<1$.\\
By Gronwall's Lemma, it follows that $\|\partial_s\kappa\|_{L^2}^2-2 \sum_{p=1}^2\sum_{i=1}^3\zeta^{pi} \left(k^{pi}\right)^2
\,\bigl/3\, \Bigr\vert_{\text{{ at    the $3$--point $O^p$}}}$ is uniformly bounded, for $t\in[0,T)$, by a constant depending on $L$ and its value on the initial network $\mathcal{N}_0$. Then, applying Young inequality to estimate~\eqref{inkappa} of Lemma~\ref{kinfty}, there holds
$$
\|\kappa\|_{L^\infty}^3 \le   C+C\|k\|_{L^2}^{3/2} \|\partial_s\kappa\|_{L^2}^{3/2}
\le   C+C_\eps\|\kappa\|_{L^2}^{6} +\eps\|\partial_s\kappa\|_{L^2}^{2}\leq 
C_\eps+\eps \|\partial_s\kappa\|_{L^2}^{2}\,,
$$
as $\Vert \kappa\Vert_ {L^2}$ is uniformly bounded in $[0,T)$. Choosing $\eps>0$ small enough, we conclude that also $\|\partial_s\kappa\|_{L^2}$ is uniformly bounded in $[0,T)$.
\end{proof}

\section{Behavior at singular times}

The aim of this section is to improve Proposition~\ref{longtime}.
We show that the length of at least one curve must go to zero.
Moreover, if we suppose that $\mathcal{N}_t$ is a flow of regular tree--like networks,
then we prove that at the singular time the curvature remains bounded.

\subsection{Vanishing of curves}

\begin{prop}\label{regnocollapse}
Let $T\in (0,+\infty)$ and let $\mathcal{N}_t$ be a maximal solution to the motion
by curvature in $[0,T)$.
Then the inferior limit of the length of at
least one curve is zero, as $t\to T$.
\end{prop}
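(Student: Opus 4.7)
The plan is to argue by contradiction. Suppose that for every curve $\gamma^i$ of the flow one has $\liminf_{t\to T}L^i(t)>0$. Since the network has only finitely many curves, I can find $L_0>0$ and $t_0\in[0,T)$ such that $L^i(t)\geq L_0$ for every $i$ and every $t\in[t_0,T)$. In particular $L^i(t)/\sqrt{T-t}\to+\infty$, so the hypothesis~\eqref{Lbasso} is satisfied. Then the second part of Proposition~\ref{longtime} applies and gives the lower bound
$$
\max_{\mathcal{N}_t}\kappa^2\geq\frac{C}{\sqrt{T-t}}\,,
$$
so $\|\kappa\|_{L^\infty(\mathcal{N}_t)}\to+\infty$ as $t\to T^-$. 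The goal is to contradict this blow--up by proving a uniform curvature bound near every reachable point.

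Fix $p_0\in\mathcal{R}$ and consider a sequence of parabolic rescalings $\mathcal{N}^{\lambda_i}_\tau$ around $(p_0,T)$ with $\lambda_i\to+\infty$. By Proposition~\ref{convergenza-a-shrinkers} a subsequence converges to a self--similarly shrinking flow corresponding to a shrinker $\mathcal{S}$ which, under the assumption $\mathbf{M1}$, has unit multiplicity. The crucial use of the length hypothesis is that in the rescaled flow every curve has length at least $\lambda_i L_0\to+\infty$; hence any component of $\mathcal{S}$ has infinite length. By the Abresch--Langer classification, each such component must be a line or halfline through the origin, and in particular $\mathcal{S}$ can contain no compact Abresch--Langer piece and no ``core'' (in the sense of Lemma~\ref{lemmatree}). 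Repeating the analysis of Lemma~\ref{lemmatree} in this core--free situation then shows that $\mathcal{S}$ is either empty, a line through the origin, a standard triod, a halfline, or two halflines meeting at $120$ degrees at a point of $\partial\Omega$.

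In every remaining case one has $\widehat{\Theta}(p_0,T)\leq 3/2$ if $p_0\in\Omega$ and $\widehat{\Theta}(p_0,T)\leq 1/2$ if $p_0\in\partial\Omega$. Therefore Corollary~\ref{regcol} yields a neighborhood $U_{p_0}$ of $p_0$ on which the curvature stays uniformly bounded for $t$ in a left neighborhood of $T$. Since $\mathcal{R}$ is compact (being closed in $\overline\Omega$), I can extract a finite subcover $\{U_{p_1},\dots,U_{p_N}\}$ of $\mathcal{R}$. By the very definition of the reachable set, for every $\varepsilon>0$ the network $\mathcal{N}_t$ is contained in an $\varepsilon$--neighborhood of $\mathcal{R}$ for all $t$ sufficiently close to $T$, and hence in $U_{p_1}\cup\dots\cup U_{p_N}$. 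This gives a uniform upper bound on $\max_{\mathcal{N}_t}\kappa^2$ close to $T$, contradicting the blow--up derived from Proposition~\ref{longtime}.

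The main obstacle is not the compactness/covering step but the classification of the tangent flow when the underlying network is not a tree: a priori the original network may contain loops, and one must check that they cannot survive the rescaling as compact components (say Abresch--Langer closed curves) of $\mathcal{S}$, or combine to produce a higher--density limit such as a cross. The uniform lower bound $L^i(t)\geq L_0$ is exactly what is needed: it forces every curve of every rescaled network to have length diverging to infinity, so no bounded component and no collapsed core can appear in $\mathcal{S}$, and the classification reduces to the multiplicity--one, core--free case treated in Lemma~\ref{lemmatree} and Corollary~\ref{enanrem}.
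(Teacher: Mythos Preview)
Your argument is correct and follows essentially the same route as the paper: assume by contradiction a uniform lower length bound, invoke Proposition~\ref{longtime} for the blow--up, classify the tangent flows (no core, hence line/triod in the interior and halfline at the boundary, exactly as in Corollary~\ref{enanrem}), apply Corollary~\ref{regcol} for local curvature bounds, and conclude by compactness of the reachable set. One small slip: in the core--free situation the boundary option ``two halflines at $120$ degrees'' is already excluded (it would require a collapsing curve, i.e.\ a core under the reflection argument), so your subsequent claim $\widehat\Theta(p_0,T)\leq 1/2$ on $\partial\Omega$ is in fact consistent --- you simply listed one case too many.
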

\begin{proof}
Assume that $T<+\infty$ is the maximal time of existence
and suppose by contradiction that
lengths of all the curves of the network are
uniformly positively bounded from below.
Then, by Proposition~\ref{longtime},
as $t\to T$ the maximum of the modulus of the curvature goes to $+\infty$. 
We perform now a parabolic rescaling (see Definition~\ref{pararesc}). at any reachable,
interior point $p_0\in\Omega$.
By Lemma~\ref{enanrem} we get that the 
limit network flow is  a static flow of either a 
straight lines with unit
multiplicity, or of a  standard triod
with unit
multiplicity.
In both cases  $\widehat{\Theta}(p_0)\leq 3/2$.
If  instead   we rescale at an end--point $P^r\in\partial\Omega$ we get
the static flow of 
 a halfline and
$\widehat{\Theta}(p_0)=1/2$.
Notice that if we parabolically rescale the flow 
$x_0\in \overline{\Omega}\setminus\mathcal{R}$, then the 
blow--up limit is the empty set, and so $\widehat{\Theta}(x_0)=0$. 
By Corollary~\ref{regcol}, we then conclude that 
the curvature is uniformly locally  bounded along the flow, around
such point $p_0$.
By the compactness of the set of reachable points ${\mathcal{R}}$, this argument clearly 
implies that the curvature of the whole $\mathcal{N}_t$ 
is uniformly bounded, as $t\to T$, which is a contradiction.
\end{proof}

We have just shown that if as $t\to T$ the length of all the curves
of the network remains uniformly bounded away from zero, then 
as $t\to T$ the curvature does not blow up and the flow
is still smooth. 

It remains to understand if as $t\to T$
either the (inferior) limit of the length of at least one curve is zero 
and the curvature explode, or we have a change of topology with bounded curvature.

\subsection{The curvature remains bounded}

We state here two lemma on the regularity of the evolving networks
needed to prove Theorem~\ref{cross}.

\begin{lem}\label{boh} 
Given a sequence of smooth curvature flows of networks 
$\mathcal{N}_t^i$ in a time interval $(t_1,t_2)$ with uniformly bounded length ratios, if in a dense subset of times $t\in(t_1,t_2)$ the networks $\mathcal{N}^i_t$ converge in a ball $B\subseteq\R^2$ in $C^1\loc$, as $i\to\infty$, to a multiplicity--one, embedded, $C^\infty$--curve $\gamma_t$ moving by curvature in $B'\supset\overline{B}$, for $t\in(t_1,t_2]$ (hence, the curvature of $\gamma_t$ is uniformly bounded), then for every $(x_0,t_0)\in B\times(t_1,t_2]$, the curvature of $\mathcal{N}_t^i$ is uniformly bounded in a neighborhood of $(x_0,t_0)$ in space--time. It follows that, for every $(x_0,t_0)\in B\times(t_1,t_2]$, we have $\mathcal{N}_t^i\to\gamma_t$ smoothly around $(x_0,t_0)$ in space--time (possibly, up to local reparametrizations of the networks $\mathcal{N}_t^i$).
\end{lem}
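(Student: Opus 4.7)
The plan is to reduce the problem to the local regularity result of Theorem~\ref{thm:locreg.2}. Concretely, one needs to show that for every $(x_0,t_0)\in B\times(t_1,t_2]$ there exist a parabolic neighborhood $Q=B_\rho(x_0)\times(t_0-\rho^2,t_0)$, numbers $\varepsilon,\eta>0$, and an index $i_0$ such that, for every $i\geq i_0$,
\[
\Theta^{i}_{p,t}(t-r^2)\leq\Theta_{\SS^1}-\varepsilon\qquad\text{for all }(p,t)\in Q\text{ and }0<r<\eta\rho.
\]
Once this uniform Gaussian bound is established, Theorem~\ref{thm:locreg.2} directly delivers a uniform bound $|\kappa^i|^2\leq K/\rho^2$ on a slightly smaller parabolic cylinder around $(x_0,t_0)$, which is the first claim.

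To verify the Gaussian estimate, I would first observe that since $\gamma_t$ is a multiplicity-one, embedded, $C^\infty$ curve moving by curvature in $B'\supset\overline B$, the corresponding density satisfies $\widehat\Theta^{\gamma}(p,t)\leq 1$ on $B\times(t_1,t_2]$, and by continuity plus the smoothness of $\gamma$ we can find $\rho>0$ and a small $\delta>0$ with
\[
\Theta^{\gamma}_{p,t}(t-r^2)\leq 1+\delta<\Theta_{\SS^1}-2\varepsilon
\]
for $(p,t)\in Q$ and $0<r<\eta\rho$, provided $\rho,\eta$ are small. To transport this to the sequence $\mathcal{N}^{i}_t$, I would fix an auxiliary time $s^\star$ in the dense subset of $(t_1,t_2)$ with $s^\star<t_0-(1+\eta^2)\rho^2$, so that $s^\star<t-r^2$ for all $(p,t,r)$ of interest. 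By Huisken-type monotonicity for the network flow (valid up to controlled boundary corrections away from $\partial\Omega$),
\[
\Theta^{i}_{p,t}(t-r^2)\leq\Theta^{i}_{p,t}(s^\star)+o(1),
\]
so it suffices to control $\Theta^{i}_{p,t}(s^\star)$. Splitting $\Theta^{i}_{p,t}(s^\star)=\int_{\mathcal{N}^{i}_{s^\star}\cap B'}+\int_{\mathcal{N}^{i}_{s^\star}\setminus B'}$, the first integral converges to the corresponding integral for $\gamma_{s^\star}$ because $\mathcal{N}^{i}_{s^\star}\to\gamma_{s^\star}$ in $C^1\loc$, while the tail integral is estimated by the Gaussian decay $\int_{|x-p|>R}e^{-|x-p|^2/4(t-s^\star)}(t-s^\star)^{-1/2}\,d\HH^1(x)$; using the uniform length-ratio bound to dyadically decompose $\mathcal{N}^{i}_{s^\star}\setminus B'$, this tail is of order $e^{-cR^2/\rho^2}$ and hence negligible once $B'$ is large compared with $\rho$. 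Combining these two ingredients gives $\Theta^{i}_{p,t}(s^\star)\leq 1+2\delta<\Theta_{\SS^1}-\varepsilon$ for $i$ large and $(p,t)\in Q$.

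With Theorem~\ref{thm:locreg.2} in hand, the curvature of $\mathcal{N}^{i}_t$ is uniformly bounded in a smaller parabolic neighborhood $Q'$. Because $\mathcal{N}^{i}_t\to\gamma_t$ in $C^1$ on a dense set of times and $\gamma$ is smooth, near $(x_0,t_0)$ each $\mathcal{N}^{i}_t$ is, for large $i$, a multiplicity-one embedded arc which can be written as a graph over the tangent line to $\gamma_{t_0}$ at $x_0$; in this graph parametrization the curvature flow becomes a scalar quasilinear parabolic equation of the form $u_t=\partial_x^2 u/(1+u_x^2)$. Standard interior Schauder/parabolic bootstrap for such equations, together with the uniform $C^0$ and $L^\infty$-curvature bounds, yields uniform $C^k$ estimates for every $k$, and hence smooth convergence $\mathcal{N}^{i}_t\to\gamma_t$ in space-time on a neighborhood of $(x_0,t_0)$, up to reparametrization.

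The main obstacle I expect is the third paragraph in reverse: carefully justifying the uniform upper bound on $\Theta^{i}_{p,t}(s^\star)$. The delicate points are the control of the contribution from the part of $\mathcal{N}^{i}_{s^\star}$ outside the ball $B'$ (which is where the bounded length-ratio hypothesis is indispensable), and the use of monotonicity to pass from a single good time $s^\star$ in the dense set to \emph{all} intermediate times $t-r^2$; both are technically standard once the setup above is organised, but they are the heart of the argument and where one must be most careful with boundary corrections coming from the fixed end-points of $\mathcal{N}^{i}$.
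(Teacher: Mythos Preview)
The paper does not actually prove this lemma; it simply cites \cite[Lemma~8.18]{ManNovPluSchu}. Your outline is the standard route to such a result and is almost certainly the argument given there: verify the density hypothesis of the local regularity Theorem~\ref{thm:locreg.2} by combining monotonicity (to reduce to a single time $s^\star$ in the dense set), $C^1\loc$ convergence at that time for the contribution inside $B'$, and the uniform length-ratio bound for the Gaussian tail outside; then bootstrap via interior parabolic estimates in graph parametrization. So your proposal is correct and matches the intended approach.

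Two minor remarks. First, your worry about boundary corrections in the monotonicity formula is mild in the applications of this lemma (e.g.\ in Theorem~\ref{cross}), where the $\mathcal{N}^i_t$ are parabolic rescalings and the end--points drift to infinity; in any case the correction is $O(e^{-c\lambda_i^2})$ and harmless. Second, for the graph step you implicitly use that for large $i$ there are no triple junctions of $\mathcal{N}^i_t$ in the relevant ball: this follows because $C^1\loc$ convergence to a multiplicity--one embedded arc is incompatible with three arcs meeting at $120$ degrees, and once curvature is bounded the absence of junctions propagates from the dense set of times to all nearby times.
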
 
\begin{proof}
See~\cite[Lemma 8.18]{ManNovPluSchu}.
\end{proof}

\begin{lem}\label{thm:locreg.3} 
Let $\mathbb{X}$ be the static flow given by a standard triod centered at the origin
and let $\mathcal{N}^i_t$ for $t\in(-1,0)$ be a sequence of smooth curvature flows of networks with uniformly bounded length ratios.
Suppose that the sequence $\mathcal{N}^i_t$  converges to ${\mathbb{X}}$ in $C^1\loc$ for almost every $t\in(-1,0)$, as $i\to\infty$.
Then the convergence is smooth on any subset of the form
$B_R(0)\times [\widetilde{t},0)$ where $R>0$ and $-1<\widetilde{t}<0$.
\end{lem}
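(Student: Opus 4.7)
The plan is to separate the region $B_R(0) \times [\widetilde{t}, 0)$ into two regimes: one bounded away from the triple junction, where the standard triod is a smooth embedded curve, and one small neighborhood of the origin, where curvature estimates must come from Gaussian density control via Huisken's monotonicity.

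Away from the origin, for any small $\rho > 0$ the set $\mathbb{X} \cap (B_R(0) \setminus \overline{B_\rho(0)})$ consists of three disjoint, smooth, multiplicity--one pieces of halflines, each a stationary (hence smooth) curvature flow. The hypothesis thus places us in the setting of Lemma~\ref{boh} applied on each of these three pieces, yielding smooth space--time convergence $\mathcal{N}^i_t \to \mathbb{X}$ on $(B_R(0) \setminus \overline{B_\rho(0)}) \times [\widetilde{t}, 0)$.

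Near the origin, the key observation is the density pinching
\begin{equation*}
\Theta^{\mathbb{X}}_{p,t}(t - r^2) \;\leq\; \tfrac{3}{2} \;<\; \Theta_{\mathbb{S}^1}=\sqrt{2\pi/e}\,,
\end{equation*}
which holds pointwise for the static triod (each halfline contributes at most $1/2$, and the maximal value $3/2$ is only attained in the limit at the vertex). The assumed $C^1\loc$ convergence on a dense set of times, combined with the uniform length-ratio bound that provides a dominating function, lets me pass the Gaussian integrals
\begin{equation*}
\Theta^i_{p,t}(t-r^2) \,=\, \int_{\mathcal{N}^i_{t-r^2}} \rho_{p,t}(\cdot, t-r^2)\,\mathrm{d}s
\end{equation*}
to the limit by dominated convergence. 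Combining this with Huisken's monotonicity for each $\mathcal{N}^i_t$ (whose boundary contributions from the fixed end--points stay uniformly away from the origin for $\rho$ small and $i$ large), I would deduce
\begin{equation*}
\Theta^i_{p,t}(t - r^2) \,\leq\, \Theta_{\mathbb{S}^1} - \varepsilon
\end{equation*}
uniformly on $B_\rho(0) \times (-\rho^2, 0)$ for $0 < r < \eta\rho$, with suitable $\eta, \varepsilon > 0$ and all $i$ large enough. Theorem~\ref{thm:locreg.2} then immediately produces a uniform pointwise bound $\kappa^2 \leq K/\rho^2$ for $\mathcal{N}^i_t$ on $B_{\rho/2}(0) \times (-\rho^2/4, 0)$, independent of $i$.

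The final step is to upgrade this $L^\infty$ curvature bound to $C^\infty$ convergence on $B_{\rho/2}(0) \times [\widetilde{t}, 0)$. Since $C^1\loc$ convergence is already in hand, this is a parabolic bootstrap for the quasilinear system~\eqref{motionsystem}: interior Schauder estimates on each curve piece combined with the Bronsard--Reitich boundary theory at the triple junction (whose angle condition is approximately satisfied by $C^1$ closeness) yield $C^{k,\alpha}$ bounds for every $k$, on any compact subset of $B_{\rho/2}(0)\times[\widetilde{t},0)$. The main technical obstacle is this bootstrap at the junction, where the three curves are coupled through nonlinear boundary conditions, but the required estimates are already contained in~\cite{BroRei, ManNovPluSchu, GoMePlu}. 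Patching with the exterior region completes the smooth convergence on all of $B_R(0) \times [\widetilde{t}, 0)$.
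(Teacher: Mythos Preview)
Your proposal is essentially correct and follows the standard strategy that the paper defers to via its citation of~\cite[Lemma~9.1]{ManNovPluSchu}: split into an annular region where Lemma~\ref{boh} applies directly, and a neighborhood of the origin where the density pinching $\widehat\Theta\leq 3/2<\Theta_{\SS^1}$ feeds into Theorem~\ref{thm:locreg.2}, then bootstrap. A couple of small points worth tightening: first, the lemma as stated makes no mention of end--points, so your parenthetical about ``boundary contributions from the fixed end--points'' is extraneous---either the networks have no end--points in the relevant region or the monotonicity formula is applied in its localized form without them; second, you should check explicitly that the choice of $\rho$ in the density argument can be made uniformly over $t_0\in[\widetilde t,0)$ (this works because the triod is static, so $\Theta^{\mathbb X}_{p,t}(t-r^2)\leq 3/2$ holds for all $(p,t,r)$, and the constraint $-1+(1+\eta)\rho^2\leq t_0$ is satisfied for $\rho$ small since $\widetilde t>-1$). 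With those adjustments your sketch matches the intended argument.
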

\begin{proof}
One gets the desired result repeating the arguments
presented in~\cite[Lemma 9.1]{ManNovPluSchu}.
\end{proof}

\begin{thm}\label{cross}
Let $\mathcal{N}_0$ be a regular initial network and let 
$\mathcal{N}_t$ be a maximal solution to the motion by curvature
in the maximal time interval $[0,T)$ with initial network $\mathcal{N}_0$.
Let $p_0$ be a reachable point for the flow 
and let $\mathcal{N}^{\lambda_i}_\tau$ be any 
sequence of rescaled curvature flows around $(p_0,T)$,
as $i\to\infty$, that converges in $C^{1,\alpha}\loc \cap W^{2,2}\loc$ for almost all $\tau\in (-\infty, 0)$ and for any $\alpha \in (0,1/2)$, to a standard cross $\mathcal{N}^\infty_\tau$. 
Then,
$$
\vert \kappa(x,t)\vert\leq C<+\infty
$$
for all $t\in[0,T)$ and $x\in [0,1]$ such that $p=\gamma(x)$ is in a neighborhood of $p_0$.
\end{thm}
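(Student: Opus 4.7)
The plan is to localize the three a priori curvature estimates of Lemma~\ref{kinfty}, Lemma~\ref{kappa2}, and Lemma~\ref{kappaesse2} around $p_0$, applying them to the sub-tree of $\mathcal{N}_t$ that realizes, in a small ball $B_\rho(p_0)$ for $t$ close to $T$, the five-curve ``H''-shape of Figure~\ref{tree}: two triple junctions $\mathcal{O}^1_t, \mathcal{O}^2_t$ joined by a short inner curve $\gamma^5$, together with four outer curves $\gamma^1, \ldots, \gamma^4$ that exit $B_\rho(p_0)$. Because by hypothesis $\mathcal{N}^{\lambda_i}_\tau$ converges to the standard cross in $C^{1,\alpha}\loc \cap W^{2,2}\loc$, this combinatorial structure is realized for $t$ sufficiently close to $T$, with both triple junctions confined to a small neighborhood of $p_0$.

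I would first verify the boundary hypotheses of Lemma~\ref{kappa2} and Lemma~\ref{kappaesse2} at the four cut-points $Q^r_t := \gamma^r_t \cap \partial B_\rho(p_0)$, after choosing $\rho$ small enough that $\overline{B_\rho(p_0)}$ contains no other reachable singular point. At each such $Q^r_t$ the tangent flow is a multiplicity-one straight line, so $\widehat{\Theta}(Q^r_t,T) = 1 < \Theta_{\SS^1}$; Corollary~\ref{regcol} then provides a uniform (in $t$) bound on $\kappa$ near $Q^r_t$, and Lemma~\ref{boh} upgrades this to smooth space-time convergence to the limit halfline, giving uniform bounds on every $\partial_s^j \kappa(Q^r_t,t)$ and $\partial_s^j \zeta(Q^r_t,t)$; this is precisely hypothesis~\eqref{endsmooth}, with the moving $Q^r_t$ playing the role of the fixed $P^r \in \partial\Omega$. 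The triple junctions being confined near $p_0$, each outer curve has length at least of order $\rho/2$, so the lower length bound required by Lemma~\ref{kappaesse2} also holds.

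With the hypotheses in place, the estimates chain together as follows: Lemma~\ref{kappa2} gives a bound on $\|\kappa\|_{L^2}$ of the sub-tree on a forward time interval $[t_*, t_*+\widetilde{T})$; Lemma~\ref{kappaesse2} upgrades this, as long as $\|\kappa\|_{L^2}$ stays bounded, to a uniform bound on $\|\partial_s\kappa\|_{L^2}$; and Lemma~\ref{kinfty} with $n=2$ then yields
\[
\|\kappa\|_{L^\infty(\text{sub-tree})}^2 \le 4C + D\,\|\kappa\|_{L^2}\,\|\partial_s\kappa\|_{L^2},
\]
hence a uniform $L^\infty$-bound on the curvature in the sub-tree, and in particular in a neighborhood of $p_0$, which is the desired conclusion.

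The principal obstacle is the time-extension step: the existence interval $\widetilde{T}$ provided by Lemma~\ref{kappa2} shrinks like $1/(\|\kappa(\cdot,t_*)\|_{L^2}^2+1)^2$, and a naive iteration in time does not reach $T$. I would handle this by contradiction: if no initial time $t_*$ close to $T$ produced a sufficiently small $\|\kappa(\cdot,t_*)\|_{L^2}$ on the sub-tree, one could choose a sequence $(p_n,t_n) \to (p_0,T)$ with $M_n := |\kappa(p_n,t_n)| \to +\infty$, perform a Type~II parabolic rescaling with $\lambda_n = M_n$, and extract a limit network flow with curvature equal to $1$ at the origin of space-time. Upper semicontinuity of the Gaussian density forces the density of this limit at the origin to be at most $\widehat{\Theta}(p_0,T) = 2$, but a non-flat eternal network flow compatible with the tangent-flow assumption at $(p_0,T)$ cannot coexist with the assumed rescaled convergence to the static, piecewise flat standard cross, a contradiction.
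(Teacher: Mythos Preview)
Your overall architecture matches the paper's: localize to a five--curve sub--tree near $p_0$, manufacture artificial end--points where condition~\eqref{endsmooth} holds by the regularity theory away from the cross vertex, then chain Lemmas~\ref{kinfty}, \ref{kappa2}, \ref{kappaesse2}. You also correctly isolate the time--extension step as the crux. The gap is precisely in how you close it.

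Your contradiction via a Type~II blow--up does not land. Rescaling by $\lambda_n=M_n=\vert\kappa(p_n,t_n)\vert$ is a \emph{different} rescaling from the one in the hypothesis; there is no reason its limit should coincide with, or be constrained by, the tangent flow $\mathcal{N}^\infty_\tau$. Upper semicontinuity of density does give $\widehat\Theta\leq 2$ on the Type~II limit, but you have not explained why a non--flat eternal network flow with density at most $2$ is impossible. Such a classification is not available in the paper (and is not elementary); without it, the sentence ``cannot coexist with the assumed rescaled convergence to the static, piecewise flat standard cross'' is an assertion, not an argument. A further technical point: to even extract a smooth Type~II limit with $\vert\kappa\vert=1$ at the space--time origin you would need a point--picking step ensuring $(p_n,t_n)$ is an almost--maximum of curvature in a parabolic neighborhood, which you omit.

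The paper avoids this entirely by a direct quantitative argument that exploits the hypothesis much more sharply. Instead of a fixed ball $B_\rho(p_0)$, it places the artificial end--points on \emph{shrinking} spheres $\partial B_{2R\sqrt{2(T-t_i)}}(p_0)$, where $t_i=T-\lambda_i^{-2}/(2+\delta)$. The $W^{2,2}\loc$--convergence of $\mathcal{N}^{\lambda_i}_{-1/(2+\delta)}$ to the zero--curvature cross yields $\int_{B_{3R}(0)\cap\mathcal{N}^{\lambda_i}_{-1/(2+\delta)}}\widetilde{k}^2\,\mathrm{d}\sigma\leq\varepsilon_i\to0$, which in the original variables reads
\[
\int_{B_{3R\sqrt{2(T-t_i)}}(p_0)\cap\mathcal{N}_{t_i}}k^2\,\mathrm{d}s\leq\frac{\varepsilon_i}{\sqrt{2(T-t_i)}}\,.
\]
Feeding this into the lifespan formula of Lemma~\ref{kappa2} gives
\[
T_i\geq t_i+\frac{2(T-t_i)}{8C\bigl(\varepsilon_i+\sqrt{2(T-t_i)}\,\bigr)^2}
= T+2(T-t_i)\Bigl(\frac{1}{8C\bigl(\varepsilon_i+\sqrt{2(T-t_i)}\,\bigr)^2}-\frac12\Bigr),
\]
which is strictly larger than $T$ once $i$ is large, since $\varepsilon_i\to0$ and $T-t_i\to0$. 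This is the missing idea: the scaling of the ball radius with $\sqrt{T-t_i}$ makes the smallness $\varepsilon_i$ of the rescaled $L^2$--curvature exactly compensate the blow--up of the unrescaled one, so that the existence time from Lemma~\ref{kappa2} overtakes $T$. After that, Lemmas~\ref{kappaesse2} and~\ref{kinfty} finish the job as you described.
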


\begin{proof}
By the hypotheses, we can assume that the sequence of rescaled networks $\mathcal{N}^{\lambda_i}_{-1/(2+\delta)}$ converges in $W^{2,2}\loc$, as $i\to\infty$, to a standard cross (which has zero curvature), for some $\delta>0$ as small as we want.\\
By means of Lemma~\ref{thm:locreg.3} and Lemma~\ref{boh}, we can also assume
that, for $R>0$ large enough, the sequence of rescaled flows $\mathcal{N}_\tau^{\lambda_i}$ converges smoothly and uniformly to the flow $\mathcal{N}^\infty_\tau$, given by the four halflines, in $\bigl(B_{3R}(0)\setminus B_R(0)\bigr)\times[-1/2,0)$. Hence, there exists $i_0\in\NN$ such that for every $i\geq i_0$ the flow $\mathcal{N}_t$ in the annulus $B_{3R/\lambda_i}(x_0)\setminus B_{R/\lambda_i}(x_0)$ has equibounded curvature, no $3$--points and an uniform bound from below on the lengths of the four curves, for $t\in [T-\lambda_i^{-2}/(2+\delta),T)$. Setting $t_i=T-\lambda_i^{-2}/(2+\delta)$, we have then a sequence of times $t_i\to T$ such that, when $i\geq i_0$, the above conclusion holds for the flow $\mathcal{N}_t$ in the annulus $B_{3R\sqrt{2(T-t_i)}}(x_0)\setminus B_{R\sqrt{2(T-t_i)}}(x_0)$ and with $t\in[t_i,T)$, we can thus introduce four ``artificial'' moving boundary points $P^r(t)\in\mathcal{N}_t$ with $\vert P^r(t)-x_0\vert=2R\sqrt{2(T-t_i)}$, with $r\in\{1,
2, 3, 4\}$ and $t\in [t_i,T)$, such that the estimates~\eqref{endsmooth} are satisfied, that is, the hypotheses
about the end--points $P^i(t)$ of Lemmas~\ref{kinfty},~\ref{kappa2} and~\ref{kappaesse2} hold.\\
As we the sequence of networks $\mathcal{N}^{\lambda_i}_{-1/(2+\delta)}$
converges in $W^{2,2}\loc$ to a limit network with zero curvature, as $i\to\infty$, we have
$$
\lim_{i\to\infty}\Vert
\widetilde{k}\Vert_{L^2(B_{3R}(0)\cap\,\mathcal{N}^{\lambda_i}_{-1/(2+\delta)})}=0\,,\qquad
\text{ that is, }\qquad 
\int_{B_{3R}(0)\cap\,\mathcal{N}^{\lambda_i}_{-1/(2+\delta)}}\widetilde{k}^2\,\mathrm{d}\sigma\leq\varepsilon_i\,,
$$
for a sequence $\varepsilon_i\to 0$ as $i\to\infty$.
Rewriting this condition for the non--rescaled networks, we have
\begin{equation}\label{notrescaled}
\int_{B_{3R\sqrt{2(T-t_i)}}(x_0)\cap\mathcal{N}_{t_i}} k^2\,ds\leq \frac{\varepsilon_i}{\sqrt{2(T-t_i)}}\,.
\end{equation}
Applying now Lemma~\ref{kappa2} to the flow of networks $\mathcal{N}_t$ in the ball $B_{2R\sqrt{2(T-t_i)}}(x_0)$ in the time interval
  $[t_i,T)$, we have that $\Vert k\Vert_{L^2(B_{2R\sqrt{2(T-t_i)}}(x_0)\cap\mathcal{N}_t)}$ is uniformly bounded, up to
    time 
$$
T_i=t_i+\min\,\Bigl\{ T, 1\big/
8C\,\bigl(\Vert k
  \Vert^2_{L^2(B_{2R\sqrt{2(T-t_i)}}(x_0)\cap\mathcal{N}_{t_i})}+1\bigr)^2\Bigr\}\,.
$$
We want to see that actually $T_i>T$ for $i$ large enough, hence, $\Vert k
  \Vert_{L^2(B_{2R}(x_0)\cap\mathcal{N}_t)}$ is uniformly bounded for
    $t\in[t_i,T)$. If this is not true, we have
\begin{align*}
T_i=&\,t_i+\frac{1}{8C\,\bigl(\Vert k  \Vert^2_{L^2(B_{2R\sqrt{2(T-t_i)}}(x_0)\cap\mathcal{N}_{t_i})}+1\bigr)^2}\\
\geq&\,t_i+\frac{1}{8C\,\bigl(\eps_i/\sqrt{2(T-t_i)}+1\bigr)^2}\\
=&\,t_i+\frac{2(T-t_i)}{8C\,\bigl(\eps_i+\sqrt{2(T-t_i)}\,\bigr)^2}\\
=&\,T+(2(T-t_i))\biggl(\frac{2}{8C\,\bigl(\eps_i+\sqrt{2(T-t_i)}\,\bigr)^2}-1\biggr)\,,
\end{align*}
which is clearly larger than $T$, as $\eps_i\to0$, when
$i\to\infty$.

Choosing then $i_1\geq i_0$ large enough, since $\Vert
k\Vert_{L^2(B_{2R\sqrt{2(T-t_{i_1})}}(x_0)\cap\,\mathcal{N}_t)}$ is 
uniformly bounded for all times $t\in[t_{i_1},T)$ and 
the length of the four curves that connect the junctions with the ``artificial'' boundary points $P^r(t)$
are bounded below by a uniform constant, Lemma~\ref{kappaesse2}
applies, hence, thanks to Lemma~\ref{kinfty}, we have a uniform bound 
on $\Vert k\Vert_{L^\infty(B_{2R\sqrt{2(T-t_{i_1})}}(x_0)\cap\,\mathcal{N}_t)}$ for $t\in[0,T)$.
\end{proof}

\begin{proof}[Proof of Theorem~\ref{bddcurvature}]
Let $\mathcal{N}_t$ be a smooth flow in the maximal time interval $[0,T)$
of the initial network $\mathcal{N}_0$. 
Let us consider a sequence of parabolically rescaled curvature flows $\mathcal{N}^{\lambda_i}_\tau$ around $(x_0,T)$. 
By Corollary~\ref{possibiliblowup},
if we suppose that $p_0\not\in\partial\Omega\cap\mathcal{R}$, then $\mathcal{N}^\infty_\tau$ can only be the static flow given by:
\begin{itemize}
\item a straight line;
\item a standard triod;
\item a standard cross.
\end{itemize}
By White's local regularity theorem~\cite{Wh},
if the sequence of rescaled curvature flows converges to a straight line, the curvature is
uniformly bounded for $t\in[0,T)$ in a ball around the point $p_0$.
Thanks to Theorem~\ref{thm:locreg.2} and Corollary~\ref{regcol}, the same holds also in the case of the standard triod. 
In this case of a standard cross
the fact that the curvature is locally uniformly 
bounded during the flow, around the point $p_0$ is granted by Theorem~\ref{cross}.
If instead $p_0\in\partial\Omega$, the only two possibilities for $\mathcal{N}^\infty_\tau$ are the static flows given by:
\begin{itemize}
\item a halfline;
\item two concurring halflines forming an angle of $120$ degrees.
\end{itemize}
For both these two situation the thesis is obtained as in the case in which $p_0\in\Omega$
by a reflection argument (see~\cite{ManNovPluSchu}, just before Theorem~14.4).
\end{proof}

\subsection{Limit networks}

Now that we know that the curvature is uniformly bounded along the flow, we can show that the
length of the curves of the evolving network converge to a limit as $t\to T$.
Clearly, if $T$ is a singular time, then at least one of these limits must be zero.

\begin{lem}\label{limite}
Let $\mathcal{N}_t$ be a maximal solution to the motion by curvature 
such that the curvature is uniformly bounded in a time interval $[0,T)$.
Then, the lengths of the curves of the network $L^i(t)$ converge to some limit, as $t\to T$.
\end{lem}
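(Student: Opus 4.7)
The plan is to show that the derivative $\frac{d}{dt}L^i(t)$ is uniformly bounded on $[0,T)$; this forces each $L^i$ to be Lipschitz continuous, and hence to admit a limit as $t\to T$, at least when $T<+\infty$.

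The starting point is the evolution formula for the line element $\partial_t(\mathrm{d}s)=(\partial_s\zeta-\kappa^2)\,\mathrm{d}s$ computed at the beginning of Section~4. Integrating along each curve,
\[
\frac{d}{dt}L^i(t) \;=\; \zeta^i(1,t)\,-\,\zeta^i(0,t) \;-\; \int_{\gamma^i}(\kappa^i)^2\,\mathrm{d}s.
\]
The integral term is controlled by $\|\kappa(\cdot,t)\|_{L^\infty}^2\,L^i(t)$, and the lengths themselves are a priori bounded: summing the formula above over all curves, the boundary contributions cancel (at fixed end--points as explained below, and at each triple junction via $\sum_{j=1}^3\zeta^{pj}(\mathcal{O}^p,t)=0$ from~\eqref{sommakzero}), so $\frac{d}{dt}L(\mathcal{N}_t)=-\int_{\mathcal{N}_t}\kappa^2\,\mathrm{d}s\leq 0$ and hence $L^i(t)\leq L(\mathcal{N}_t)\leq L(\mathcal{N}_0)$.

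It then remains to estimate the boundary values of $\zeta^i$. At a fixed external vertex $P^r$, differentiating $\gamma^r(1,t)=P^r$ in time gives $\partial_t\gamma^r(1,t)=0$, and its tangential component reads $\zeta^r(1,t)=0$. At each triple junction $\mathcal{O}^p$, the junction relation~\eqref{zetafunzdik} gives $\sqrt{3}\,\zeta^{pj}=\kappa^{p(j-1)}-\kappa^{p(j+1)}$, hence $|\zeta^{pj}(\mathcal{O}^p,t)|\leq C\,\|\kappa(\cdot,t)\|_{L^\infty(\mathcal{N}_t)}$, which is uniformly bounded by the hypothesis of the lemma. Combining these with the bound on $L^i(t)$, one obtains $|\frac{d}{dt}L^i(t)|\leq C$ uniformly on $[0,T)$.

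When $T<+\infty$, Lipschitz continuity of $L^i$ on $[0,T)$ extends it continuously to $[0,T]$ and yields the desired limit. The main subtlety, if one wants the conclusion also in the case $T=+\infty$, is that a pointwise bound on the derivative no longer suffices for convergence; however, the integrability $\int_0^{+\infty}\!\int_{\mathcal{N}_t}\kappa^2\,\mathrm{d}s\,\mathrm{d}t\leq L(\mathcal{N}_0)$ coming from total-length monotonicity, together with a bounded-variation-in-time argument applied to the junction tangential velocities (which are themselves bounded by the curvature), should give the same conclusion. This finite-versus-infinite dichotomy is the only delicate point; the rest of the proof is a direct computation based on \eqref{zetafunzdik} and \eqref{sommakzero}.
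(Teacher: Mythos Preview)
Your proof is correct and follows essentially the same approach as the paper: you compute $\frac{d}{dt}L^i(t)=\zeta^i(1,t)-\zeta^i(0,t)-\int_{\gamma^i}\kappa^2\,\mathrm{d}s$, bound the boundary tangential velocities using $\zeta=0$ at fixed end--points and relation~\eqref{zetafunzdik} at triple junctions, and conclude that $L^i$ is Lipschitz, hence has a limit as $t\to T$. The paper's proof is terser and does not spell out the bound $L^i(t)\leq L(\mathcal{N}_0)$ needed to control the integral term, which you correctly include; your discussion of the case $T=+\infty$ is extraneous here, since in the paper the lemma is only applied when $T<+\infty$ is a singular time.
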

\begin{proof}
For every curve $\gamma^i(\cdot,t)$ and for every time
$t\in[0,T)$ we have
\begin{equation}\label{levol}
\frac{dL^i(t)}{dt}=\zeta^i(1,t)-\zeta^i(0,t)-\int_{\gamma^i(\cdot,t)}k^2\,\mathrm{d}s\,.
\end{equation}
If  $\gamma^i(t,1)$  is an external vertex, then it is fixed
during the evolution and so $\zeta(t,1)=0$.
If  $\gamma^i(t,0)$ or  $\gamma^i(t,1)$ are triple junctions,
by~\eqref{zetafunzdik} we can express $\zeta(t,\cdot)$ as a linear combination
of $\kappa^{i+1}(t,\cdot)$ and $\kappa^{i-1}(t,\cdot)$.
Hence if the curvature is uniformly bounded,
then by formula~\eqref{levol}, any function
$L^i$ as a uniformly bounded derivative and the conclusion follows.
\end{proof}

\begin{lem}\label{remhot}
If {\bf{M1}} holds, there exist the limits $x_i=\lim_{t\to T}O^i(t)$, for
$i\in\{1,2,\dots,m\}$ and the set $\{x_i=\lim_{t\to T}O^i(t) ~ \vert ~ i=1,2,\dots,m\}$ is the union
of the set of the points $x$ in $\Omega$ where
$\widehat\Theta(x)>1$ with the set of the end--points of $\mathcal{N}_t$ such that
the curve getting there collapses as $t\to T$.
\end{lem}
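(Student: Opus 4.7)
The plan is first to establish existence of the limits $x_p$ from the bounded velocity of the triple junctions, and then to prove the two inclusions that characterise the set $\{x_1,\dots,x_m\}$.

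To get existence of the limits I would invoke Theorem~\ref{bddcurvature} to conclude that $\kappa$ is uniformly bounded on $[0,T)$. At each triple junction $O^p(t)$, every concurring curve moves with velocity $\kappa^{pj}\nu^{pj}+\zeta^{pj}\tau^{pj}$, and relation~\eqref{zetafunzdik} of Lemma~\ref{condizioni-giunzione} expresses $\zeta^{pj}$ as a linear combination of the three curvatures at $O^p$. Hence the velocity of each $O^p$ is uniformly bounded, the map $t\mapsto O^p(t)$ is Lipschitz on $[0,T)$, and the limits $x_p=\lim_{t\to T}O^p(t)\in\overline{\Omega}$ exist; moreover $|O^p(t)-x_p|=O(T-t)$.

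For the inclusion $\subseteq$, I would fix $p$ and consider two cases. If $x_p\in\Omega$, take a sequence of parabolic rescalings $\mathcal{N}^{\lambda_i}_\tau$ around $(x_p,T)$; by Proposition~\ref{convergenza-a-shrinkers} and Lemma~\ref{possibiliblowup} it converges, up to a subsequence, to the empty flow, or to the static flow of a line, a standard triod, or a standard cross. Reachability of $x_p$ excludes the empty flow. The Lipschitz estimate gives $\lambda_i\bigl\vert O^p\bigl(T-\lambda_i^{-2}|\tau|\bigr)-x_p\bigr\vert=O(\lambda_i^{-1})\to 0$, so the origin in the tangent flow is a limit of rescaled triple junctions and, by Lemma~\ref{lemmatree}, belongs to the core of the shrinker; this excludes the line. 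Hence the tangent is a triod or a cross, giving $\widehat\Theta(x_p)\in\{3/2,2\}$, in particular $\widehat\Theta(x_p)>1$. If instead $x_p\in\partial\Omega$, convexity of $\Omega$ together with the fact that only the fixed end--points $P^r$ of $\mathcal{N}_t$ lie on $\partial\Omega$ force $x_p=P^r$ for some $r$; letting $\gamma^r$ denote the curve of the network attached to $P^r$, the boundary version of Lemma~\ref{possibiliblowup}, under \textbf{M1}, allows at $P^r$ only a halfline or two halflines as tangent flows, and a positive residual length of $\gamma^r$ in the limit would produce an extra component in the tangent flow, a contradiction; therefore $L^r(t)\to 0$, i.e., $\gamma^r$ collapses.

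For the reverse inclusion $\supseteq$, if $x\in\Omega$ satisfies $\widehat\Theta(x)>1$, any tangent flow at $(x,T)$ is by Lemma~\ref{possibiliblowup} empty, a line, a triod, or a cross, and the density assumption excludes the first two options; Lemma~\ref{lemmatree} then ensures that the core of such a shrinker contains at least one triple junction, which corresponds in the unrescaled flow to some $O^p(t)$ converging to $x$, so $x=x_p$. If instead $P^r$ is an end--point whose incident curve $\gamma^r$ collapses, the opposite end of $\gamma^r$ is a triple junction $O^p(t)$ with $|O^p(t)-P^r|\le L^r(t)\to 0$, hence $x_p=P^r$. The main obstacle is the boundary case of the inclusion $\subseteq$: showing that $L^r(t)\to 0$ whenever $O^p(t)\to P^r$, since the bounded curvature alone does not prevent a curve with closing endpoints from remaining long; this requires the full strength of Lemma~\ref{possibiliblowup} at the boundary, which under \textbf{M1} rules out any tangent flow richer than one or two halflines from the origin.
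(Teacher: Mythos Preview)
The paper does not prove this lemma here; it simply refers to \cite[Lemma~10.7]{ManNovPluSchu}. Your proposal, by contrast, gives a self--contained argument using the tools developed in the present paper (Theorem~\ref{bddcurvature}, Lemma~\ref{condizioni-giunzione}, Lemma~\ref{possibiliblowup}), and in the tree setting of this paper it is essentially correct. This is the same circle of ideas that the paper itself uses in the proof of Proposition~\ref{bdcurvcollapse} (see in particular the computation around formula~\eqref{Oconv2}), so your approach is well aligned with the surrounding text.

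A few points deserve sharpening. First, invoking Theorem~\ref{bddcurvature} for the curvature bound restricts the argument to trees; this is fine for the applications in this paper, but note that the cited reference may prove the lemma in greater generality. Second, in the boundary case of the inclusion $\subseteq$, the claim that $x_p\in\partial\Omega$ forces $x_p=P^r$ for some $r$ is true but not immediate from ``convexity together with the fact that only the $P^r$ lie on $\partial\Omega$'': it requires the (standard) maximum--principle argument that interior points of $\mathcal{N}_t$ cannot reach $\partial\Omega$ in finite time except at the fixed end--points. Third, your appeals to Lemma~\ref{lemmatree} via the word ``core'' are slightly imprecise: a standard triod has no core (nothing collapses), so the relevant fact is simply that the rescaled triple junction $\widetilde O^p(\tau)\to 0$ forces the limit shrinker to have a multipoint at the origin, which a straight line does not; and conversely, if $\widehat\Theta(x)>1$ the tangent flow is a triod or a cross, both of which arise as $C^{1,\alpha}\loc$--limits of rescaled networks carrying at least one triple junction near the origin, whence (by pigeonhole on the finitely many $O^p$ and the first part) some $x_p=x$. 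With these adjustments in wording, your argument goes through.
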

\begin{proof}
See~\cite[Lemma 10.7]{ManNovPluSchu}. 
\end{proof}

\begin{prop}\label{bdcurvcollapse} 
Let $\Omega$ be a bounded, convex and open subset of $\mathbb{R}^2$.
Let $\mathcal{N}_0\subseteq\Omega$ be an initial regular tree--like 
network composed of $N$ curves,
with $m$ triple junctions
$\mathcal{O}^1, \ldots,\mathcal{O}^m$
 and possibly with $\ell$ external vertices 
$P^1,\ldots,P^\ell\in\partial\Omega$.
Let $\mathcal{N}_t=\bigcup_{i=1}^N\gamma^i([0,1],t)$ be a 
maximal  solution to the motion by curvature
in the maximal time interval $[0,T)$ with initial datum $\mathcal{N}_0$.
Suppose that the Multiplicity--One--Conjecture holds true.
Then as $t\to T$ the networks $\mathcal{N}_t$, up to
reparametrization proportional to arclength, converge in $C^1$ to a network $\mathcal{N}_T=\bigcup_{i=1}^M\widehat{\gamma}^i_T([0,1])$
composed of $M$ curves with positive length with $M\leq N$.
For $i\in\{1, \ldots,M\}$ the curves 
$\widehat{\gamma}^i_T([0,1])$ belong to $C^1\cap W^{2,\infty}$. Every multi--point of the $\mathcal{N}_T$ is 
\begin{itemize}
\item either a regular triple junction;
\item or a $4$--point where the four concurring curves have opposite exterior unit tangent vectors in pairs and form angles of $120/60$ degrees;
\end{itemize}
and the termini on $\partial\Omega$ are 
\begin{itemize}
\item either external vertices of order one;
\item or  $2$--points  where the
  two concurring curves form an angle of $120$
degrees.
\end{itemize}
\begin{figure}[H]
\begin{center}
\begin{tikzpicture}[rotate=90,scale=0.75]
\draw[color=black!40!white, shift={(0,-2.8)}]
(-0.05,2.65)to[out= -90,in=150, looseness=1] (0.17,2.3)
(0.17,2.3)to[out= -30,in=100, looseness=1] (-0.12,2)
(-0.12,2)to[out= -80,in=40, looseness=1] (0.15,1.7)
(0.15,1.7)to[out= -140,in=90, looseness=1.3](0,1.1)
(0,1.1)--(-.2,1.35)
(0,1.1)--(+.2,1.35);
\draw[color=black]
(-0.05,2.65)to[out= 30,in=180, looseness=1] (2,3)
(-0.05,2.65)to[out= -90,in=150, looseness=1] (0.17,2.3)
(-0.05,2.65)to[out= 150,in=-20, looseness=1] (-2,3.3)
(0.17,2.3)to[out= -30,in=100, looseness=1] (-0.12,2)
(-0.12,2)to[out= -80,in=40, looseness=1] (0.15,1.7)
(0.15,1.7)to[out= -140,in=90, looseness=1](0,1.25)
(0,1.25)to[out= -30,in=180, looseness=1] (1.9,0.7)
(0,1.25)to[out= -150,in=-15, looseness=1] (-1.9,1.2);
\draw[color=black,dashed]
(-2,3.3)to[out= 160,in=-20, looseness=1](-2.7,3.5)
 (-1.9,1.2)to[out= 165,in=-15, looseness=1](-2.7,1.3)
(1.9,0.7)to[out= 0,in=-160, looseness=1] (2.6,0.9)
(2,3)to[out= 0,in=160, looseness=1] (2.8,2.9);
\draw[color=black!40!white,shift={(0,-6)}]
(0,2.65)--(1.73,3.65)
(0,2.65)--(1.73,1.65)
(0,2.65)--(-1.73,3.65)
(0,2.65)--(-1.73,1.65);
\draw[color=black,shift={(0,-6)}]
(0,2.65)to[out= -30,in=180, looseness=1] (1.9,2)
(0,2.65)to[out= -150,in=-15, looseness=1] (-1.9,2.3)
(0,2.65)to[out= 30,in=180, looseness=1] (2.2,3.3)
(0,2.65)to[out= 150,in=-20, looseness=1] (-2.2,3.1);
\draw[color=black,dashed,shift={(0,-6)}]
(-2.2,3.1)to[out= 160,in=-20, looseness=1](-3,3.3)
 (-1.9,2.3)to[out= 165,in=-15, looseness=1](-2.7,2.4)
(1.9,2)to[out= 0,in=-160, looseness=1] (2.6,2.2)
(2.2,3.3)to[out= 0,in=160, looseness=1] (3,3.2);
\end{tikzpicture}\qquad\qquad\qquad
\begin{tikzpicture}[rotate=90,scale=0.75,shift={(15,0)}]
\draw[color=black!40!white, shift={(0,-3)}]
(-0.05,2.65)to[out= -90,in=150, looseness=1] (0.17,2.3)
(0.17,2.3)to[out= -30,in=100, looseness=1] (-0.12,2)
(-0.12,2)to[out= -80,in=40, looseness=1] (0.15,1.7)
(0.15,1.7)to[out= -140,in=90, looseness=1.3](0,1.1)
(0,1.1)--(-.2,1.35)
(0,1.1)--(+.2,1.35);
\draw[color=black]
(-0.05,2.65)to[out= 30,in=180, looseness=1] (2,3)
(-0.05,2.65)to[out= -90,in=150, looseness=1] (0.17,2.3)
(-0.05,2.65)to[out= 150,in=-20, looseness=1] (-2,3.3)
(0.17,2.3)to[out= -30,in=100, looseness=1] (-0.12,2)
(-0.12,2)to[out= -80,in=40, looseness=1] (0.15,1.7)
(0.15,1.7)to[out= -140,in=90, looseness=1](0,1.25);
\draw[color=black,dashed]
(-2,3.3)to[out= 160,in=-20, looseness=1](-2.7,3.5)
(2,3)to[out= 0,in=160, looseness=1] (2.8,2.9);
\draw[color=black!40!white]
(-3,1.7)to[out= -30,in=-180, looseness=1] (0,1.25)
(+3,1.7)to[out= -150,in=0, looseness=1] (0,1.25);
\path[font=\small]
(0,.2) node[left]{$P^r$};
\path[font=\Large]
(2.6,.5) node[left]{$\Omega$};
\draw[color=black!40!white,shift={(0,-6)}]
(0,2.65)--(1.73,3.65)
(0,2.65)--(-1.73,3.65);
\draw[color=black,shift={(0,-6)}]
(0,2.65)to[out= 30,in=180, looseness=1] (2.2,3.3)
(0,2.65)to[out= 150,in=-20, looseness=1] (-2,3.3);
\draw[color=black,dashed,shift={(0,-6)}]
(-2,3.3)to[out= 160,in=-20, looseness=1](-2.7,3.5)
(2.2,3.3)to[out=0,in=-60, looseness=1] (2.8,3.7);
\draw[color=black!40!white,shift={(0,-6)}]
(-3,3.1)to[out= -30,in=-180, looseness=1] (0,2.65)
(+3,3.1)to[out= -150,in=0, looseness=1] (0,2.65);
\fill(0,1.25) circle (2pt);
\fill(0,-3.35) circle (2pt);
\path[font=\small]
(0,-4.4) node[left]{$P^r$};
\path[font=\Large]
(2.6,-4.1) node[left]{$\Omega$};
\end{tikzpicture}
\end{center}
\begin{caption}{Collapse of a curve in the interior and at
  an end--point of $\mathcal{N}_t$.\label{Pcollapse}}
\end{caption}
\end{figure}
\end{prop}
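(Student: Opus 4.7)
The main inputs are already available: Theorem~\ref{bddcurvature} gives a uniform bound $\Vert\kappa\Vert_{L^\infty(\mathcal{N}_t)}\le K$ for all $t\in[0,T)$; Lemma~\ref{limite} says each $L^i(t)$ converges as $t\to T$; Lemma~\ref{remhot} pins down the limits of the triple junctions; and Corollary~\ref{enanrem} together with Lemma~\ref{possibiliblowup} classifies the tangent flows. The plan is to combine these with an arclength-proportional reparametrization and a blow-up analysis at each accumulation point.

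\smallskip
\emph{Step 1 (uniform estimates and $C^1$ convergence of the non--vanishing curves).}
For every $t\in[0,T)$ I reparametrize each curve $\gamma^i(\cdot,t):[0,1]\to\overline{\Omega}$ so that $|\partial_x\gamma^i(x,t)|\equiv L^i(t)$. In this parametrization $\partial_x\tau^i=L^i\kappa^i\nu^i$, hence
$$
\Vert\partial_x\gamma^i(\cdot,t)\Vert_{L^\infty}=L^i(t),\qquad
\Vert\partial_x^2\gamma^i(\cdot,t)\Vert_{L^\infty}\le (L^i(t))^2 K .
$$
Let $M$ be the number of indices for which $L^i_T:=\lim_{t\to T}L^i(t)>0$. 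For these curves, $\{\gamma^i(\cdot,t)\}_{t<T}$ is bounded in $W^{2,\infty}([0,1])$, so Arzel\`a--Ascoli gives subsequential $C^1$ limits $\widehat\gamma^i_T\in C^1\cap W^{2,\infty}$. Uniqueness of the limit is established by the Cauchy estimate coming from the evolution equation: since $\partial_t\gamma^i=\kappa^i\nu^i+\zeta^i\tau^i$ with $\zeta^i$ expressed at junctions through~\eqref{zetafunzdik} in terms of the uniformly bounded curvature, and $L^i(t)$ converges, one gets $|\gamma^i(x,t_1)-\gamma^i(x,t_2)|\le C\,|t_1-t_2|^{1/2}$ by a straightforward Cauchy--Schwarz argument, and similarly for the spatial derivatives via parabolic interior regularity. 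The curves with $L^i_T=0$ collapse, in the sense that $\gamma^i(\cdot,t)$ converges uniformly to a single point, which by Lemma~\ref{remhot} belongs to the accumulation set of the triple junctions.

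\smallskip
\emph{Step 2 (structure of the multi--points in $\widehat{\mathcal{N}}_T$).}
Fix a point $x\in\overline{\Omega}$ which is the limit of at least one junction $\mathcal{O}^p(t)$, and let $J$ be the set of curves collapsing to $x$. If $J=\emptyset$, the $C^1$ convergence preserves the $120$ angle condition $\sum_j \tau^{pj}=0$ in the limit, producing a regular triple junction at $x$. Otherwise I apply a parabolic rescaling at $(x,T)$: by Lemma~\ref{possibiliblowup} (using $\textbf{M1}$ and the fact that $\mathcal{N}_0$ is a tree) the tangent flow is a static line, triod, or standard cross if $x\in\Omega$, and a halfline or a $120$--pair if $x\in\partial\Omega$. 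Since the curvature of the original flow is uniformly bounded, no $\kappa$--blow-up occurs, so the only tangent flows that can arise from a non--trivial collapse are the standard cross (interior case) and the pair of halflines at $120$ degrees (boundary case); the line case would force $\widehat\Theta(x)=1$, incompatible with Lemma~\ref{remhot}, and the triod case would correspond to no collapse at all. In the cross case, the topology of the collapsed core (by the analysis of $N=2$ in Lemma~\ref{lemmatree}) forces exactly one inner curve to vanish between two triple junctions, yielding a $4$--point in $\widehat{\mathcal{N}}_T$ with tangents opposite in pairs and angles of $120/60$ degrees. In the boundary case, the unique boundary curve adjacent to the collapsing triple junction vanishes, and the remaining two curves arrive at $P^r$ with the prescribed $120$-degree angle.

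\smallskip
\emph{Main obstacle.}
The routine part is the $C^1\cap W^{2,\infty}$ convergence away from the collapse points. The delicate step is the geometric identification at the accumulation points: I must rule out, under $\textbf{M1}$ and the tree assumption, all tangent flows that would produce coincident limit tangents or multiplicity greater than one (e.g.\ the $N=3,4$ configurations ruled out in the proof of Lemma~\ref{lemmatree}), and then match the tangent flow classification with the $C^1$ limit obtained in Step~1 so that the angle conditions ($120/60$ at an interior $4$--point, $120$ at a boundary $2$--point) are actually inherited by $\widehat{\mathcal{N}}_T$. Once the tangent flow at $x$ is known to be the standard cross (resp.\ two halflines at $120$ degrees), the $C^1$ convergence from Step~1 along the non--collapsing curves, together with the angle condition satisfied along the flow, transfers the cross angles to the limit network, completing the proof.
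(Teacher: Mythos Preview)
Your overall strategy coincides with the paper's: use Theorem~\ref{bddcurvature} to get $\Vert\kappa\Vert_{L^\infty}\le K$, reparametrize proportionally to arclength to obtain uniform $W^{2,\infty}$ bounds and hence $C^1$ compactness, get a Cauchy-in-time estimate on the parametrizations to upgrade compactness to convergence, and then run a blow-up analysis at each accumulation point of the triple junctions via Lemma~\ref{possibiliblowup}. Two technical points, however, are handled too loosely and one of them is a genuine gap.

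\textbf{The tangential velocity in the arclength-proportional gauge.} In Step~1 you claim a Cauchy estimate $|\gamma^i(x,t_1)-\gamma^i(x,t_2)|\le C|t_1-t_2|^{1/2}$ ``by Cauchy--Schwarz''. With $\kappa$ uniformly bounded one should in fact get a Lipschitz estimate, but this requires controlling the \emph{tangential} speed $\widehat\zeta^i(t,x)$ at interior points, not just at junctions via~\eqref{zetafunzdik}. The paper does this by an explicit computation: differentiating the constant-speed condition $|\partial_x\widehat\gamma^i|=L^i(t)$ in time and comparing with~\eqref{levol} yields a closed formula for $\widehat\zeta^i(t,x)$ in terms of $\widehat\zeta^i(t,0)$, $\widehat\zeta^i(t,1)$, $L^i(t)$ and $\int_0^x(\kappa^i)^2$, all of which are bounded. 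Your invocation of ``parabolic interior regularity'' does not replace this, and the weaker $1/2$-H\"older rate you write down is not what your later argument needs (see below).

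\textbf{Matching the tangent flow to the actual collapse.} This is the real gap. You argue that if some curve collapses to an interior point $x$, the tangent flow must be the standard cross, and then invoke the $N=2$ case of Lemma~\ref{lemmatree} to say ``the core is a single edge, hence exactly one curve collapses''. But Lemma~\ref{lemmatree} describes the core \emph{of the blow-up limit}; to transfer that back to $\mathcal{N}_t$ you must know that every triple junction $O^j(t)$ converging to $x$ is actually seen by the rescaled flow, i.e.\ does not drift off to infinity under the parabolic rescaling. The paper closes this by using the Lipschitz bound on the junction velocity (which follows from the bounded-velocity computation above): $|O^j(t)-x|\le D\,|T-t|$, so after rescaling $|\widetilde O^j(\tau)|\le D\sqrt{(T-t(\tau))/2}\to 0$. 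Hence all junctions accumulating at $x$ sit inside the core of the tangent flow; since the core of the cross contains exactly two junctions and one edge, only one curve of $\mathcal{N}_t$ collapses at $x$ and the four surviving curves inherit the $120/60$ pattern. Without this step your argument in Step~2 does not exclude, for instance, two adjacent inner curves collapsing to the same point while the tangent flow still ``looks like'' a cross because extra junctions have escaped the rescaled picture. Note that a H\"older-$1/2$ rate on $O^j(t)$ would only give $|\widetilde O^j(\tau)|\le D$, which is not enough to force the junctions into the origin.
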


\begin{proof} 
First of all we note that 
by Lemma~\ref{limite}
there exist the limits of the lengths of the curves $L^i(T)=\lim_{t\to T}L^i(t)$, 
for every $i\in\{1,2,\dots, N\}$. Some of these limits can be zero, we suppose that
$L^1(T),\ldots, L^M(T)$ are different from zero, with $M\leq N$.\\
We reparametrize the curves of $\mathcal{N}_t$ proportionally to their 
arclength and we relabel them as $\widehat{\gamma}^i(\cdot,t):[0,1]\to\R^2$.
We claim that for such a choice of the parametrizations, the velocities 
$\partial_t\widehat{\gamma}^i$
are uniformly bounded in space and time.
Indeed the normal velocity is the curvature, which is uniformly bounded.
Thanks to~\eqref{zetafunzdik} we can express the tangential velocity of a curve at the junctions as a linear combination
of the curvature of the other two curves, so also the tangential velocity at the junctions
is uniformly bounded. Moreover the end--points on $\partial\Omega$
are fixed during the evolution, so their tangential velocity is zero. 
Thus for every $i\in \{1,\ldots,N\}$ and $t \in [0,T)$ there exists a constant $C$
such that $\vert\widehat{\zeta}^i(t,0)\vert\leq C$, $\vert \widehat{\zeta}^i(t,1)\vert\leq C$.
It remains to deal with the tangential velocity $\widehat{\zeta}^i(t,x)$ for $x\in (0,1)$.
Because of our specific choice of the parametrizations we have
$\vert\partial_x\widehat{\gamma}^i(t,x)\vert=L(\gamma^i(t))$, hence 
$\int_0^x \vert\partial_x\widehat{\gamma}^i(x)(t,x)\vert\,\mathrm{d}x=L(\gamma^i(t))x$ with $x\in (0,1)$.
By~\eqref{levol} we have
\begin{equation}\label{primomodo}
\partial_t\int_0^x \vert\partial_x\widehat{\gamma}^i(t,x)\vert\,\mathrm{d}x=\partial_t L(\gamma^i(t))\,x
=\Bigl(\widehat{\zeta}^i(t,1)-\widehat{\zeta}^i(t,0)-\int_{\widehat{\gamma}^i(\cdot,t)}(\kappa^i)^2\,\mathrm{d}s\Bigr)\,x\,.
\end{equation}
Keeping in mind that we are working with the tangential velocity 
$$
\widehat{\zeta}^i(t,x)=\left\langle\partial_t\widehat{\gamma}^i(t,x),\tau^i(t,x)\right\rangle\,,
$$
we can write
\begin{align*}
\partial_x\widehat{\zeta}^i(t,x)&
=\left\langle\partial_x\partial_t\widehat{\gamma}^i(t,x),\tau^i(t,x)\right\rangle
+\left\langle\partial_t\widehat{\gamma}^i(t,x),\partial_x \tau^i(t,x)\right\rangle\\
&=\left\langle\partial_x\partial_t\widehat{\gamma}^i(t,x),\tau^i(t,x)\right\rangle
+\left\langle\partial_t\widehat{\gamma}^i(t,x),L(\widehat{\gamma}^i(t))\partial_s\tau^i(t,x)\right\rangle\\		
&=\left\langle\partial_x\partial_t\widehat{\gamma}^i(t,x),\tau^i(t,x)\right\rangle
+\bigl\langle\kappa^i(t,x)\nu^i(t,x)+\widehat{\zeta}^i(t,x)\tau^i(t,x),L(\widehat{\gamma}^i(t))\kappa^i(t,x)\nu^i(t,x)\bigr\rangle\\
&=\left\langle\partial_x\partial_t\widehat{\gamma}^i(t,x),\tau^i(t,x)\right\rangle
+L(\widehat{\gamma}^i(t))(\kappa^i(t,x))^2\,.
\end{align*}
This gives
\begin{equation}\label{secondomodo}
\partial_t\int_0^x \vert\partial_x\widehat{\gamma}^i(t,x)\vert\,\mathrm{d}x=
\int_0^x\left\langle\partial_t\partial_x\widehat{\gamma}^i(t,x),\tau^i(t,x)
\right\rangle\,\mathrm{d}x
=\int_0^x \partial_x\widehat{\zeta}^i(t,x)-L(\widehat{\gamma}^i(t))(\kappa^i(t,x))^2\,\mathrm{d}x\,,
\end{equation}
hence, comparing~\eqref{primomodo} and~\eqref{secondomodo}, we finally deduce that
for every $x\in (0,1)$, $t\in [0,T)$ and $i\in\{1,\ldots,N\}$, we have
\begin{equation}
\widehat{\zeta}^i(t,x)=\widehat{\zeta}^i(t,0)
+L(\widehat{\gamma}^i(t))\int_0^x(\kappa^i(t,x))^2\,\mathrm{d}x
+\Bigl(\widehat{\zeta}^i(t,1)-\widehat{\zeta}^i(t,0)-\int_{\widehat{\gamma}^i(\cdot,t)}(\kappa^i)^2\,\mathrm{d}s\Bigr)\,x
\end{equation}
and by the boundedness of $\kappa^i$, $\widehat{\zeta}^i(0)$, $\widehat{\zeta}^i(1)$
the claim follows.
So there exists $D>0$ such that
for any $x\in[0,1]$ and every pair $t,\overline{t}\in[0,T)$
$$
\vert{\widehat{\gamma}}^i(x,t)-{\widehat{\gamma}}^i(x,\overline{t})\vert\leq
\int_t^{\overline{t}}\vert{\widehat{\gamma}}_t^i(x,\xi)\vert\,d\xi\leq D\vert t-\overline{t}\vert\,.
$$
This  implies that ${\widehat{\gamma}}^i(\cdot, t):[0,1]\to\R^2$ is a Cauchy sequence in $C^0([0,1])$, hence the network $\mathcal{N}_t$ converges uniformly to a limit family of continuous curves ${\widehat{\gamma}}^i_T:[0,1]\to \R^2$, as $t\to T$.
Without loss of generality we can suppose that $(0,0)\in\Omega$.
We have
\begin{align*}
&\sup_{x\in [0,1]}\vert\widehat{\gamma}^i(t,x)\vert\leq \mathrm{diam}(\Omega)<+\infty\,,\\
&\sup_{x\in [0,1]}\vert\partial_x\widehat{\gamma}^i(t,x)\vert= L^i(t)\leq L(\mathcal{N}_0)<+\infty\,,\\
&\sup_{x\in [0,1]}\vert\partial_x^2\widehat{\gamma}^i(t,x)\vert=\sup_{x\in [0,1]} L^i(t)\vert \kappa^i(t,x)\vert\leq C L(\mathcal{N}_0)<+\infty\,,
\end{align*}
Hence, the family of curves $\widehat{\gamma}^i(t,x)$ composing the networks $\mathcal{N}_t$, converges in $C^1$, as $t\to T$, to the family $\widehat{\gamma}^i_T$ composing $\widehat{\mathcal{N}}_T$ and  by the uniform bound on the curvature, all the curves $\widehat{\gamma}^i_T$ belong to $W^{2,\infty}$. In particular, for $i\in\{1, \ldots,M\}$
the curves $\widehat{\gamma}^i_T$ are parametrized proportionally to arclength
and for $i\in\{M+1, \ldots,N\}$ we have that $\widehat{\gamma}^i_T$ are constant maps.

We deal now with the convergence of the unit tangent vectors. We observe that if we denote with $s$ the arclength parameter, we have
\begin{equation}\label{taudeg}
\biggl\vert\frac{\partial \widehat{\tau}^i(x,t)}{\partial x}\biggr\vert=
\biggl\vert\frac{\partial \tau^i(s,t)}{\partial s}\biggr\vert L^i(t)=
\vert \kappa^i(s,t)\vert L^i(t)\leq CL^i(t)\leq C^2\,,
\end{equation}
for some constant $C$, hence, for every sequence of times $t_n\to T$ have a -- not relabeled --
subsequence such that the maps $\widehat{\tau}^i(\cdot,t_n)$
converge uniformly to some maps $\widehat{\tau}_T^i$.
For any curve $\widehat{\gamma}^i(t,x)$ and for any $x,y\in [0,1]$ and $t\in[0,T)$ we also have
\begin{align*}
&\sup_{x,y\in [0,1]}\vert \tau^i(t,x)-\tau^i(t,y)\vert
=\frac{1}{L^i(t)}\sup_{x,y\in [0,1]}\vert \partial_x\gamma^i(t,x)-\partial_x\gamma^i(t,y)\vert\\
&\leq \frac{1}{L^i(t)}\sup_{x,y\in [0,1]}\int_x^y\vert\partial_x^2\gamma^i(t)\vert\,\mathrm{d}t
= \frac{1}{L^i(t)}\sup_{x,y\in [0,1]}\int_x^y\vert\kappa^i(t)(L^i(t))^2\,\mathrm{d}t
\leq \vert x-y\vert C L^i(t)\,.
\end{align*}

Consider $i\in\{1,\ldots,M\}$, that is $L^i(t)$ does
not go to zero and 
the curve $\widehat{\gamma}^i_T$ is a regular curve. 
It is easy to see that the limit maps $\widehat{\tau}_T^i$ must
coincide with the unit tangent vector field to
the curve $\widehat{\gamma}^i_T$, hence, the full sequence
$\widehat{\tau}^i(\cdot,t)$ converges to $\widehat{\tau}^i_T$.

If instead $i\in\{M+1,\ldots,N\}$ and $L^i(T)=0$, 
then the limit as $t\to T$ of 
$$
\sup_{x,y\in [0,1]}\vert \tau^i(t,x)-\tau^i(t,y)\vert
$$
is zero. Then, for every $x\in [0,1]$
 the maps $\widehat{\tau}^i(t_n,x)$
converge to a constant unit vector $\widehat{\tau}_T^i$.

Call $\gamma$ a curve whose end--points are two triple junctions $\mathcal{O}^1,\mathcal{O}^2$
and  for $i\in\{1,2\}$ call $\varphi^i,\psi^i$ the other two curves concurring at $\mathcal{O}^i$.
Suppose that the length of  $\gamma$ goes to zero and the lengths of 
$\varphi^i,\psi^i$ remain strictly positive, as $t\to T$.
Thanks to the information we have collected and the angle conditions at the triple junctions
in the limit $\mathcal{N}_T$, the two triple junctions $\mathcal{O}^1,\mathcal{O}^2$
coincide and the four curve  $\varphi^1,\varphi^2,\psi^1,\psi^2$ concur at the junctions, 
forming a four point where the unit tangent vectors form angles of $120$ and $60$ degrees. 

Now call $\gamma$ a curve with one internal and one external vertex $P\in\partial\Omega$
and $\varphi,\psi$ of the other two curves that concur 
at the common triple junctions.
Suppose that the length of  $\gamma$ goes to zero and the lengths of 
$\varphi,\psi$ remains strictly positive 
as $t\to T$.
If we symmetrize the initial network around $P$ (central symmetry with center $P$)
and then we let evolve this network, then 
we have a flow or regular networks 
and the symmetry is preserved during the flow.
So, as $t\to T$, the triple junction $\mathcal{O}$ and it symmetric one $\mathcal{O}'$
coalesce onto $P$. This is exactly the phenomena described in the previous case.
Looking only at the original curves, at $T$ we have that two curves arrived at $P$
and their unit tangent vectors form $120$ degrees.

To conclude the proof we must exclude that the length of two adjacent curves 
goes to zero as $t\to T$.
Recalling now Lemma~\ref{remhot}, it is enough to analyze the flow locally around every point $p_0$ which belongs to the set of the limits of the triple junctions $\{O^j(t)\}$, as $t\to T$. If the point $p_0$ is the limit point of a single triple junction $O^j(t)$, clearly locally around $p_0$ no curve is vanishing. 
Suppose that the curve $\gamma^i(\cdot, t)$ (at least) collapses with its end--points going to $p_0$. We perform a parabolic rescaling (see Definition~\ref{pararesc}). around $(p_0,T)$. 
The self--similar shrinking limit flows which we obtain in this way must be among the ones in Corollary~\ref{possibiliblowup}: the static flow of a line, of a standard triod, or of a standard cross. The first two cases are clearly excluded, since it would hold $\widehat{\Theta}(x_0)\leq 3/2$, then arguing as in Lemma~\ref{lemmatree} (see Figure~\ref{fig8}, in particular), a collapse of curves is not possible, otherwise $\widehat{\Theta}(x_0)\geq2$. Hence, the only possibility is a standard cross (which has a core composed only of a single collapsed curve).
This actually implies that at $p_0$ there are no collapsing curves other than $\gamma^i(\cdot,t)$ and only its end--points (among the triple junctions) are converging to $p_0$. As 
\begin{equation}\label{Oconv2}
\vert O^j(t_2)-O^j(t_1)\vert=\biggl\vert\int_{t_1}^{t_2}v^j(\xi)\,d\xi\,\biggl\vert\leq
\int_{t_1}^{t_2}\vert v^j(\xi)\vert\,d\xi\leq D\vert t_2-t_1\vert\,,
\end{equation}
for every triple junction $O^j(t)$ converging to $p_0$, for every $t_1,t_2\in[0,T)$, hence
$$
\vert O^j(t)-x_0\vert\leq D\vert T-t\vert
$$
for every $t\in[0,T)$, we have that its image $\widetilde{O}^j(\tau)$, after performing the parabolic  rescaling procedure, satisfies
$$
\vert\widetilde{O}^j(\tau)\vert=\frac{\vert O^j(t(\tau)-x_0\vert}{\sqrt{2(T-t(\tau))}}\leq\frac{D\vert T-t(\tau)\vert}{\sqrt{2(T-t(\tau))}}=D\sqrt{(T-t(\tau))/2}\,,
$$
which tends to zero, as $\tau\to+\infty$, in particular, all the triple junctions converging to $p_0$ cannot ``disappear'' in the limit (going to infinity). As the core of the standard cross is a single line, the above claim follows and the collapsing curve $\gamma^i(\cdot,t)$ is ``isolated'', as in the statement.
\end{proof}

\begin{proof}[Proof of Theorem~\ref{main}]
We only have to show that the curves of $\mathcal{N}_T$ are actually $C^2$. By means of Lemma~\ref{kappaesse2}, $\Vert \partial_s\kappa\Vert_{L^2}$ is locally uniformly bounded on $[0,T)$, which implies that the convergence of the non--collapsing curves of $\mathcal{N}_t$ to $\mathcal{N}_T$, as $t\to T$, is actually in $C^2\loc$ and we are done.
\end{proof}

\section{An example of a Type-$\mathbf 0$ singularity}\label{extypezero}

We have proved that during the flow of regular tree like network the curvature
remains bounded, even if the length of a curve goes to zero.
We finally show that our main theorem is sharp in the sense that the coalesce of
two triple junctions during the flow might actually happen.

\medskip

Consider an initial regular smooth network $\mathcal{N}_0$ composed of five curves, which is centrally symmetric, in the convex domain $\Omega$ (also centrally symmetric).
Suppose that four curves have one end--point fixed on $\partial\Omega$
and there are two triple junctions (see Figure~\ref{type0fig}).

\begin{figure}[H]
\begin{center}
\begin{tikzpicture}[scale=0.30]
\draw[color=black,scale=3.3,domain=-3.141: 3.141,
smooth,variable=\t,shift={(0,0)},rotate=0]plot({5*sin(\t r)},
{1.5*cos(\t r)});
\draw[color=black]
(0,0)to[out= 90,in=-90, looseness=1](0,2)
(0,2)to[out=150,in=-50, looseness=1] (-12,3.46)
(0,2)to[out= 30,in=-160, looseness=1] (12,3.46);
\draw[color=black, rotate=180]
(0,0)to[out= 90,in=-90, looseness=1](0,2)
(0,2)to[out=150,in=-50, looseness=1] (-12,3.46)
(0,2)to[out= 30,in=-160, looseness=1] (12,3.46);
\draw[color=black!30!white]
(-10,0)to[out= 0,in=180, looseness=1](10,0)
(-10,0)to[out= 120,in=-60, looseness=1] (-12,3.46)
(-10,0)to[out= -120,in=60, looseness=1] (-12,-3.46)
(10,0)to[out= 60,in=-120, looseness=1] (12,3.46)
(10,0)to[out=-60,in=120, looseness=1] (12,-3.46);
\path[font=]
(15,-2.8) node[below] {\Large{$\Omega$}}
(-1,4.5) node[below] {\large{$\mathcal{N}_0$}}
(12.2,2) node[below] {\large{${\mathbb M}$}};
\end{tikzpicture}
\end{center}
\begin{caption}{The networks $\mathcal{N}_0$ and ${\mathbb M}$.\label{type0fig}}
\end{caption}
\end{figure}
\noindent 
If the triangle given by the two end--points in the upper half plane and the origin
has the angle at the origin greater than $120$ degrees, then  there exists only one 
minimal network ${\mathbb M}$ connecting the four end--points of $\mathcal{N}_0$ on the boundary of $\Omega$. 
During the smooth curvature flow $\mathcal{N}_t$ of $\mathcal{N}_0$ (that maintains the central symmetry)  the curvature is bounded and either a singularity develops or the flow $\mathcal{N}_t$ is smooth for every positive time. In this second case, by Proposition~13.5 in~\cite{ManNovPluSchu} (a standard energy--based argument), as $t\to+\infty$, the network $\mathcal{N}_t$ converges in $C^1$ to ${\mathbb M}$, which is not possible because of their different geometric ``structures''. Hence, at some time $T<+\infty$ a Type-0 singularity must develop and the only possibility is the collapse of the ``central'' curve of $\mathcal{N}_t$, by its symmetry.

\bibliographystyle{amsplain}
\bibliography{collasso2tripunti}
\end{document}